\documentclass{amsart}[12pt]
\usepackage{graphicx,url,amsthm,xspace,enumitem,subfig}
\usepackage[dvipsnames]{xcolor}
\usepackage{amscd}
\usepackage{multirow}

\usepackage{xy}
\xyoption{all}

\usepackage{pinlabel}
\usepackage[linktocpage]{hyperref}

\usepackage[margin=3cm, marginpar=2.5cm]{geometry}

\newcommand{\C}{\mathbb{C}}
\renewcommand{\P}{\mathbb{P}}
\newcommand{\R}{\mathbb{R}}

\newcommand{\Z}{\mathbb{Z}}
\newcommand{\cptwo}{\C\P^2}
\newcommand{\cpone}{\C\P^1}
\newcommand{\cptwobar}{\overline{\C\P}\,\!^2}

\newcommand{\Ac}{\mathcal{A}}
\newcommand{\Bc}{\mathcal{B}}

\newcommand{\Dc}{\mathcal{E}_6}
\newcommand{\Ec}{\mathcal{E}_3}

\newcommand{\HFplus}{\HF^+}

\newcommand{\hb}{{\rm hb}}

\newcommand{\spinc}{spin$^c$~}
\newcommand{\Spinc}{{\rm Spin}^c}
\newcommand{\ft}{\mathfrak{t}}
\newcommand{\fs}{\mathfrak{s}}

\DeclareMathOperator{\HF}{HF}
\DeclareMathOperator{\tb}{tb}
\DeclareMathOperator{\rot}{rot}

\DeclareMathOperator{\lk}{lk}
\DeclareMathOperator{\PD}{PD}
\DeclareMathOperator{\Sing}{Sing}
\DeclareMathOperator{\Fr}{Fr}

\theoremstyle{plain}
\newtheorem{theorem}{Theorem}[section]

\newtheorem{lemma}[theorem]{Lemma}
\newtheorem{prop}[theorem]{Proposition}

\theoremstyle{definition}
\newtheorem{question}[theorem]{Question}

\theoremstyle{remark}
\newtheorem{remark}[theorem]{Remark}

\numberwithin{equation}{section}

\title{Rational cuspidal curves and symplectic fillings}

\author{Marco Golla}
\email{marco.golla@univ-nantes.fr}
\address{CNRS, Laboratoire de Math\'ematiques Jean Leray, Nantes, France}

\author{Laura Starkston}
\email{lstarkston@math.ucdavis.edu}
\address{Department of Mathematics, UC Davis, One Shields Ave, Davis, CA 95616, U.S.A.}

\begin{document}

\maketitle

\begin{abstract}
A symplectic rational cuspidal curve with positive self-intersection number admits a concave neighborhood, and thus a corresponding contact manifold on the boundary. In this article, we study symplectic fillings of such contact manifolds, providing a complementary perspective to our earlier article on symplectic isotopy classes of rational cuspidal curves. We explore aspects of these symplectic fillings through Stein handlebodies and rational blow-downs. We give examples of such contact manifolds which are identifiable as links of normal surface singularities, other examples which admit no symplectic fillings, and further examples where the fillings can be fully classified.
\end{abstract}


\section{Introduction}
{
In~\cite{bigGS}, we studied classifications of \emph{symplectic rational cuspidal curves} in closed symplectic $4$-manifolds up to equisingular symplectic isotopy (see also~\cite{GKutle} for an extensions of those results for curves in $\cptwo$). Rational means the geometric genus is zero, and cuspidal means that the singularities are locally irreducible and modeled on the singularities of complex curves. 
The link of a cuspidal singularity is an algebraic knot, and we primarily focus on cases where the link is a torus knot $T(p,q)$.

In this article we explore the symplectic and contact geometry associated with rational cuspidal curves from a complementary perspective. Namely we look at the symplectic fillings found as the complement of a neighborhood of such a curve in some closed symplectic $4$-manifold.  We showed in~\cite[Theorem 2.13]{bigGS} that if $C$ is a singular symplectic curve of positive self-intersection, it has small concave neighborhoods, with induced contact boundary we will denote by $(Y_C,\xi_C)$. Note that this contact structure depends only on the topological types of the singularities, geometric genus, and self-intersection number of $C$. In this article, we will study properties of these contact manifolds and their symplectic fillings for many examples of (neighborhood germs of) rational curves $C$, specified by their singularity types and self-intersection number.
We describe some situations where such contact manifolds admit no fillings, and others where the fillings can be completely classified and described explicitly.

Note that any symplectic filling of $(Y_C,\xi_C)$ can be viewed as the complement of a neighborhood of a symplectic embedding of $C$ in a closed symplectic $4$-manifold (obtained by gluing the concave neighborhood of $C$ to the filling). Although our prior work gives classification results for embeddings for many such rational cuspidal curves and thus yields abstract classifications of the symplectic fillings of $(Y_C,\xi_C)$, more work is required to determine concrete diagrammatic presentations for these symplectic fillings and to explore the relations between different fillings. We take on these problems in this article.

	We start with a study of the fillings of $(Y_C,\xi_C)$ where $C$ is a rational cuspidal curves with a unique singularity modeled on $\{x^a=y^b\}$ (whose link is the torus knot $T(a,b)$) which embeds into $\cptwo$ in the homology class $d[\cpone]\in H_2(\cptwo)$. Note in this case, the corresponding self-intersection number is $d^2$. According to~\cite[Theorem~2.3]{Liu} (see also~\cite[Remark~6.18]{BCG}), all such $C$ belong to the list of~\cite[Theorem 1.1]{FdBLMHN}; namely, $(p,q;d)$ is one of:
\begin{align*}
	\Ac_p&: (p,p+1;p+1), 	&& (F_{j-2},F_{j+2};F_j), 					&& \Ec: (3,22;8),\\
	\Bc_p&: (p,4p-1;2p), 		&& (F_j^2, F_{j+2}^2; F_jF_{j+2}), 		&& \Dc: (6,43;16).
\end{align*}	
	Here $F_i$ denotes the $i^{\rm th}$ Fibonacci number. The two Fibonacci families correspond to curves $C$ where $(Y_C,\xi_C)$ is either a universally tight lens space or a connected sum of universally tight lens spaces.
	In these cases, the fillings have been previously classified, some of them have been presented through Stein handle diagrams and Lefschetz fibrations, and their rational blow-down relations have been established~\cite{Honda, Li, LekiliMaydanskiy, BhupalOzbagci}.
	We denote the remaining two infinite families with $\mathcal{A}_p$ and $\mathcal{B}_p$, corresponding to $(a,b) = (p,p+1)$ (and degree $p+1$) and $(a,b) = (p,4p-1)$ (and degree $2p$), respectively. The two exceptional cases, corresponding to $(a,b) = (3,22), (6,43)$ (of degrees $8$ and $16$) are denoted with $\mathcal{E}_3$ and $\mathcal{E}_6$. We find presentions for each of the corresponding contact manifolds $(Y_C,\xi_C)$ as the boundary of a Stein handlebody diagram.

\begin{theorem}\label{thm:unicuspidal}
	Let $C$ be a rational cuspidal curve in $\cptwo$ with one cusp of torus knot type.
	\begin{itemize}
		\item If $C$ is of type $\Ac_p$, then the unique minimal symplectic filling of $\xi_C$ has the Stein handlebody diagrams shown in Figure~\ref{fig:ApStein}, so $(Y_C,\xi_C)$ is specified by the corresponding contact surgery diagram.
		
		\item If $C$ is of type $\Bc_p$, then the two minimal symplectic fillings are given by Stein handlebodies in Figures~\ref{fig:BpStein} and~\ref{fig:BpStein2}, so $(Y_C,\xi_C)$ is specified by the corresponding contact surgery diagrams.
		
		\item If $C$ is of type $\Ec$ or $\Dc$, then $\xi_C$ is the canonical contact structure on $Y_C$ given as the link of a normal complex surface singularity whose minimal resolution is symplectic deformation equivalent to the Stein manifolds whose handle diagrams are shown in Figures~\ref{fig:E3Stein} and~\ref{fig:E6Stein}, thus yielding contact surgery diagrams for the corresponding contact manifolds.
	\end{itemize}
\end{theorem}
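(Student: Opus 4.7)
The plan is to combine the classification of symplectic embeddings established in \cite{bigGS} with explicit Weinstein/Stein handle constructions on the complements. Any symplectic filling $W$ of $(Y_C,\xi_C)$ glues to the concave neighborhood of $C$ to produce a closed symplectic $4$-manifold $X$ containing a symplectic rational cuspidal curve equisingular isotopic to $C$ with $X\setminus\nu(C)$ minimal; conversely each such embedding $C\hookrightarrow X$ yields a filling by excising a concave neighborhood. This bijection lets me transport the ambient classification from \cite{bigGS} (and Liu's Theorem~2.3) to a classification of minimal fillings: for types $\Ac_p$, $\Ec$, $\Dc$ the only ambient manifold is $\cptwo$ and the embedding is unique up to isotopy, giving one minimal filling; for type $\Bc_p$ there are exactly two isotopy classes, giving two minimal fillings.

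Next I would construct the Stein handlebodies in each case. I would identify a distinguished filling as an iterated (generalized) rational blowdown of $\cptwo\setminus\nu(C)$ or of a related Kirby picture, starting from the plumbing description of the concave neighborhood. Converting rational blowdown surgeries into Stein handle attachments, or using the Gompf/Ding--Geiges contact surgery calculus directly on a Legendrian realization of the plumbing diagram, produces the candidate diagrams in Figures~\ref{fig:ApStein}, \ref{fig:BpStein}, \ref{fig:BpStein2}, \ref{fig:E3Stein}, \ref{fig:E6Stein}. For each candidate I must verify three things: that the Legendrian framing conditions make the handlebody Stein, that the boundary (as a smooth $3$-manifold with contact structure) matches $(Y_C,\xi_C)$ as described by the plumbing of the concave neighborhood, and that the homological/topological invariants ($b_2$, intersection form, Euler characteristic, signature) match the unique abstract filling(s) coming from the embedding classification.

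For $\Ec$ and $\Dc$, the additional claim is that $(Y_C,\xi_C)$ is the canonical contact structure on the link of a normal surface singularity. To prove this, I would compute the plumbing graph of the boundary of the concave neighborhood using the standard formulas relating the cusp's Puiseux data $(p,q)$ and the self-intersection $d^2$, check that the resulting graph is negative definite with dual graph matching a known rational (or elliptic) surface singularity, and invoke Caubel--N\'emethi--Popescu-Pampu to conclude that the induced contact structure agrees with the canonical one. The minimal resolution then furnishes a distinguished Stein filling, whose handle diagram is the Legendrian realization shown in the corresponding figures.

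The main obstacle is the verification step: going from an abstract filling (or an abstract handle decomposition implied by a rational blowdown) to a concrete Stein diagram whose contact boundary is provably $(Y_C,\xi_C)$. This will require careful handle-slide and Rolfsen-twist manipulations to recognize the boundary as the prescribed contact plumbing, together with bookkeeping showing the Stein structure computed from the Legendrian framings realizes the correct Chern class and does not admit any further Stein deformation to a simpler filling. For $\Bc_p$, distinguishing the two fillings diagrammatically (so that together with the abstract count of two they exhaust the list) is the finest part of this verification.
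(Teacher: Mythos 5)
Your overall framework---the bijection between minimal fillings of $(Y_C,\xi_C)$ and relatively minimal symplectic embeddings of $C$, imported from \cite{bigGS}, followed by the construction of candidate Stein diagrams and a verification step---matches the paper's strategy. But the verification step, which you correctly flag as the main obstacle, contains two genuine gaps where your proposed tools would not suffice.

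First, for the $\Ac_p$ and $\Bc_p$ cases, you propose to match the contact boundary of the candidate handlebody with $\xi_C$ by ``bookkeeping showing the Stein structure computed from the Legendrian framings realizes the correct Chern class.'' This fails whenever $H_1(Y_C)$ has $2$-torsion (e.g.\ for $\Ac_p$ with $p+1$ even, where $|H_1| = (p+1)^2$): two distinct spin$^c$ structures can have the same $c_1$, so agreement of Chern classes does not pin down the contact structure. The paper's resolution is the key technical content of Section~\ref{s:AB}: it computes Gompf's $\Gamma$-invariant (a spin-structure--refined version of $\PD(c_1)/2$) for the concavely induced structure $\xi_C$ in general (Theorem~\ref{thm:Gamma(xi,s0)}), computes it for the handlebody boundary via Gompf's surgery formula while carefully tracking the characteristic sublink through the Kirby moves, and then invokes Matkovi\v{c}'s theorem that tight contact structures on small Seifert fibered L-spaces are determined by their spin$^c$ structure. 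Note this also requires verifying that $Y_C$ is an L-space (via large surgery on an L-space torus knot) and that both structures are tight; none of this appears in your outline.

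Second, for $\Ec$ and $\Dc$, invoking Caubel--N\'emethi--Popescu-Pampu does not close the argument. That result says any two \emph{Milnor fillable} contact structures on a given $3$-manifold are contactomorphic; it identifies the canonical structure among Milnor fillable ones, but $\xi_C$ is defined as the concave boundary of a curve neighborhood and is not a priori Milnor fillable. The $3$-manifolds $Y_C$ here carry many tight contact structures, and one must single out which one $\xi_C$ is. The paper does this by combining Tosun's classification (every tight structure on these Seifert fibered spaces arises from a Legendrian realization of the plumbing link) with a Heegaard Floer correction-term computation: since $\xi_C$ admits a rational homology ball filling, $d(Y_C,\ft_{\xi_C})=0$, and an explicit evaluation of $\frac{c_1^2 - 2\chi - 3\sigma}{4}$ over all Legendrian realizations shows that only the canonical structure and its conjugate achieve $d=0$. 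Without an argument of this kind, the identification of $\xi_C$ with the canonical structure is unproved.
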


In the $\Ac_p$ and $\Bc_p$ cases, to prove the Stein handlebodies we present have contact boundaries agreeing with $(Y_C,\xi_C)$, we use Kirby calculus to identify the smooth boundary of the handlebody with $Y_C$, and an argument using Gompf's $\Gamma$-invariant~\cite{Gompf2} of contact structures and results of Matkovi\v{c}~\cite{Matkovic} to identify the contact boundary of the Stein handlebody with $\xi_C$. The key technical tool is a general computation of Gompf's $\Gamma$-invariant for the contact manifolds $(Y_C,\xi_C)$ when $C$ is an arbitrary rational cuspidal curve (Theorem~\ref{thm:Gamma(xi,s0)}).

In the $\Ec$ and $\Dc$ cases, to identify the contact structure as the canonical structure from a complex surface singularity we use classification results of Ghiggini~\cite[Theorem~1.3]{Ghiggini-Lclassification} (see also~\cite{Tosun} for a more general statement) and Heegaard Floer correction terms.
	
%

Next we consider relations between different symplectic fillings of the same $(Y_C,\xi_C)$. We focus on when such fillings are related by \emph{rational blow-down}, a surgery operation introduced by Fintushel and Stern~\cite{FintushelStern} and generalized by~\cite{Park} and~\cite{SSW}. Because its effect on Seiberg--Witten invariants has been established, rational blow-down has proven a very useful tool to produce small exotic $4$-manifolds (see \cite{ParkExotic,StipsiczSzabo,ParkStipsiczSzabo,FintushelSternDN} as a small sample of the many examples in the literature).
When we have multiple symplectic fillings of a given contact manifold, we can ask whether they are related via a known rational blow-down operation, or whether the substitution of fillings yields a new symplectic cut-and-paste operation. In the case of lens spaces with the canonical contact structure Bhupal and Ozbagci proved that all symplectic fillings are obtained from a plumbing by a sequence of symplectic rational blow-downs~\cite{BhupalOzbagci}.
We prove the analogous result for the cases of $(Y_C,\xi_C)$ considered above: every minimal symplectic filling, can be obtained from the largest filling by a sequence of symplectic rational blow-downs (where we include generalized rational blow-downs of~\cite{Park} and~\cite{SSW}).

\begin{theorem}\label{thm:qbdintro}
Let $C$ be a rational unicuspidal curve in $\cptwo$ whose singularity link is a torus knot.
\begin{itemize}
\item If $C$ is of type $\Bc_p$, the two fillings of $(Y_C,\xi_C)$ differ by rationally blowing down a symplectic $-4$-sphere (or rationally blowing up a Lagrangian $\mathbb{RP}^2$).

\item If $C$ is of type $\Ec$ or $\Dc$, one filling is given by a symplectic plumbing of spheres and every other filling is obtained from this plumbing by a sequence of symplectic rational blow-downs.
\end{itemize}
\end{theorem}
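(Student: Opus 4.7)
The plan is, in both cases, to combine the explicit handle presentations provided by Theorem~\ref{thm:unicuspidal} with the classification of minimal symplectic fillings of $(Y_C,\xi_C)$ obtained in~\cite{bigGS}, and to realize the passage between fillings by exhibiting the requisite symplectic configurations of spheres inside the ``largest'' filling.

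For the $\Bc_p$ case, the strategy is direct: from Figures~\ref{fig:BpStein} and~\ref{fig:BpStein2} I would identify in one of the two fillings (the one of larger Euler characteristic) a symplectically embedded sphere of self-intersection $-4$. Such a sphere is visible as a $2$-handle whose framing is $-4$ and whose attaching curve can, after a sequence of handle slides, be isotoped into the convex boundary of the $0$-handle so as to bound a symplectic disk capping off to a symplectic sphere in the filling. The rational blow-down of this sphere replaces its neighborhood (bounded by $L(4,1)$) with the rational homology ball $B_{2,1}$, producing a new symplectic filling of $(Y_C,\xi_C)$. To conclude that this is the other filling on our list, I would match topological invariants (Euler characteristic, intersection form, $H_1$, $\pi_1$) and invoke the classification in~\cite{bigGS}, which guarantees exactly two minimal fillings. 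The equivalent formulation in terms of Lagrangian $\rptwo$ follows because a Weinstein neighborhood of a Lagrangian $\rptwo$ is symplectomorphic to $B_{2,1}$, and the rational blow-up is the inverse operation.

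For the $\Ec$ and $\Dc$ cases, Theorem~\ref{thm:unicuspidal} identifies one filling $W_{\rm res}$ as the minimal symplectic resolution of the surface singularity with link $(Y_C,\xi_C)$, presented symplectically as a plumbing of spheres according to the dual resolution graph. The classification from~\cite{bigGS} gives a finite explicit list of other minimal symplectic fillings $W_1,\dots,W_n$. For each $W_i$, I would locate inside $W_{\rm res}$ a symplectic configuration of spheres whose plumbing graph is one of the generalized rational blow-down graphs (in the sense of~\cite{Park,SSW}), and whose neighborhood, when replaced by the corresponding rational homology ball, yields a $4$-manifold with the intersection form, $b_2$, and fundamental group of $W_i$. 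The candidate configuration can be pinned down combinatorially: the intersection lattice of $W_{\rm res}$ contains that of $W_i$ as a sublattice, and the ``discarded'' orthogonal complement must match the intersection form of the plumbing being blown down. Once found on paper, symplectic realizability inside $W_{\rm res}$ follows from the standard availability of symplectic spheres representing sums of exceptional-divisor classes together with their transverse intersections in the plumbing.

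The main technical obstacle is the last step in the $\Ec,\Dc$ case: verifying that the combinatorially identified symplectic rational blow-down in $W_{\rm res}$ actually recovers $W_i$ on the nose, rather than merely some symplectic filling with the same homological invariants. To handle this, I would lean on the classification from~\cite{bigGS}, which, together with uniqueness of minimal symplectic fillings of $(Y_C,\xi_C)$ with prescribed intersection form in each of these enumerated cases, reduces the question to a comparison of intersection forms and Betti numbers after the blow-down. This reduction is precisely what makes the enumerative classification from our earlier work the essential input, both to provide the target list of fillings and to certify that matching invariants is enough to identify the outcome of each symplectic rational blow-down.
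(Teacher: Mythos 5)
Your overall architecture matches the paper's: identify the largest filling with the resolution plumbing (this is what Propositions~\ref{p:canonical322} and~\ref{p:canonical643} buy you), locate blow-downable sub-plumbings sitting inside it as literal subconfigurations of the plumbing spheres (so "symplectic realizability'' is immediate, not something to be argued via exceptional classes), blow them down, and identify the output using the classification of fillings from~\cite{bigGS}. For the $\Bc_p$ bullet the paper simply invokes~\cite[Proposition 6.6]{bigGS}, and your sketch is consistent with that. Two small points you should make explicit: for $\Dc$ the filling $W_1$ is only reached by a \emph{sequence} of two blow-downs (disjoint $(-8,-2,-2,-2,-2)$ and $(-6,-2,-2)$ chains), not a single connected configuration; and you need \emph{two distinct embeddings} of the same $(-6,-2,-2)$ graph into the plumbing~\eqref{e:256/x} (one disjoint from the $(-8,-2,-2,-2,-2)$ chain, one meeting it) in order to reach both fillings with $b_2=6$.

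The genuine gap is in your final identification step for the $\Dc$ case. You propose to recognize the result of each blow-down "by a comparison of intersection forms and Betti numbers.'' That works whenever $b_2$ alone pins down the filling, but there are two minimal fillings $W_6$ and $Z_6$ with $b_2=6$. Their \emph{rational} intersection forms agree (both are negative definite of rank $6$), and they are distinguished only by the integral forms ($\det = 64$ versus $256$). Computing the integral intersection form of a generalized rational blow-down is not the routine restriction to an orthogonal complement: by Mayer--Vietoris, $H_2$ of the result can be a proper overlattice of $H_2$ of the plumbing complement, with the correction controlled by $\ker\bigl(H_1(\partial)\to H_1(B)\oplus H_1(Z)\bigr)$, i.e.\ exactly by the $H_1$ data you have not yet computed. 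The paper sidesteps this: it shows the blow-down of the $(-6,-2,-2)$ chain meeting the long chain is \emph{simply connected} (the normal generator of $\pi_1$ of the rational ball is a meridian that bounds half of the adjacent $-2$-sphere), while $W_6$ has $H_1\neq 0$ because $\det Q_{W_6}=64$ forces the map $H^2(W_6)\to H_1(\partial W_6)\cong\Z/256\Z$ to be non-surjective; hence that blow-down is $Z_6$. It then shows $Z_6$ contains no symplectic sub-plumbing that can be rationally blown down at all (by enumerating the homology classes of symplectic spheres in $Z_6$), which forces the disjoint $(-6,-2,-2)$ blow-down, which admits a further blow-down to $W_1$, to be $W_6$. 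Without an argument of this kind your proof does not determine which of $W_6$, $Z_6$ each blow-down produces, and in particular does not establish that \emph{both} are reachable from the plumbing.
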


In the $\Ec$ and $\Dc$ cases, we specify precisely which pairs of fillings are related by a rational blow-down. In each case, there are pairs of fillings which cannot be related by rational blow-downs---instead one must first rationally blow-up one of the fillings to get to a larger filling and then rationally blow-down to get to the second filling.

Generalizing from the case of unicuspidal $C$ which embed in $\cptwo$, we can consider more general rational cuspidal curves and the corresponding contact manifolds $(Y_C,\xi_C)$. We specify the curve data determining $(Y_C,\xi_C)$ through the singularity types of $C$ and the self-intersection number, $s$, of $C$. In this more general setting, we find a large class of such contact structures which admit no fillings at all. In Section~\ref{ss:bounds} we will define two effectively computable invariants of cuspidal singularities, $M$ and $\ell$, and we will prove a slight generalization of the following theorem.

\begin{theorem}\label{thm:bounds}
Let $C$ be a rational cuspidal curve with $\Sing(C) \neq \emptyset$ such that $(Y_C,\xi_C)$ is weakly fillable. Then
\[
C\cdot C \le \sum_{p \in \Sing(C)} M(p_i) + 2\min_{p \in \Sing(C)} \ell(p_i) + 1.
\]
\end{theorem}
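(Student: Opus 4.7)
The strategy is to translate the fillability hypothesis into a closed symplectic $4$-manifold containing $C$ as a rational cuspidal curve, and then apply symplectic adjunction constraints together with the local singularity data encoded by $M$ and $\ell$.

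Given a weak symplectic filling $W$ of $(Y_C,\xi_C)$, I would glue $W$ to the concave neighborhood $N_C$ of $C$ provided by \cite[Theorem~2.13]{bigGS} along their common contact boundary. A preliminary deformation of the weak symplectic form on $W$ near $\partial W$ is needed to match the concave form along $\partial N_C$; when $Y_C$ is a rational homology sphere this follows from the Eliashberg--Etnyre method of promoting a weak to a strong filling, and in general from a direct collar interpolation argument. The outcome is a closed symplectic $4$-manifold $X = W \cup_{Y_C} N_C$ containing $C$ as a symplectic rational cuspidal curve with the same singularities and the same self-intersection $C\cdot C > 0$. Since $C$ is topologically a sphere of positive self-intersection, standard results on symplectic $4$-manifolds containing such a sphere---extended to the singular case in our earlier paper \cite{bigGS}---force $X$ to be a rational symplectic $4$-manifold and give control on $K_X\cdot C$.

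The inequality then follows by combining the symplectic adjunction/genus formula for cuspidal curves in a rational symplectic surface with the definitions of $M(p)$ and $\ell(p)$ from Section~\ref{ss:bounds}. The sum $\sum_p M(p_i)$ should aggregate the contribution of all cusps and can be viewed as a refinement of the total $\delta$-invariant of $C$, while the additional $2\min_p \ell(p_i)+1$ comes from a local argument at the cusp $p$ minimizing $\ell$: after iterated blow-ups at $p$, positivity of intersection between the proper transform of $C$ and a suitably chosen exceptional curve in the resolution yields the sharper local estimate.

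The main obstacle will be precisely this last step: verifying that the sharpening localizes at a single cusp (thereby explaining the $\min$ rather than a sum over cusps) and identifying the factor $2\ell(p)+1$ with the correct combinatorial invariant of the resolution graph or semigroup data of the cusp. This requires choosing the sequence of local blow-ups and the exceptional class against which to pair $C$ with some care, and it is here that the effective nature of the invariants $M$ and $\ell$, as opposed to a more naive use of $\delta$, becomes essential.
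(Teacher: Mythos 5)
Your setup is the right one and matches the paper's: glue the filling to the concave neighborhood (using Ohta--Ono to promote weak to strong, since $Y_C$ is a rational homology sphere when $C$ is cuspidal), obtain a closed symplectic $4$-manifold $X$ containing $C$, and bring McDuff's theorem into play. The paper also begins with a reduction you do not mention but would need: by Lemma~\ref{l:high-four}, fillability is monotone under decreasing self-intersection (blow up at smooth points of $C$), so it suffices to rule out the single borderline value $C\cdot C = \sum M(p_i) + 2\ell + 2$ where $\ell = \min\ell(p_i)$.

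The genuine gap is in the heart of the argument, exactly where you flag ``the main obstacle.'' Your proposed mechanism --- adjunction plus control on $K_X\cdot C$ plus ``positivity of intersection with a suitably chosen exceptional curve'' --- is not the right tool and I do not see how to make it produce this bound. Adjunction naturally outputs the $\delta$-invariants $\tfrac12\sum m(m-1)$, whereas the bound is phrased in terms of $M(p)=\sum m^2$, the defect of self-intersection under the minimal smooth resolution; the difference $M-2\delta=\sum m$ is precisely what a genus-formula argument cannot see. What the paper actually does is construct a specific configuration and derive a homological contradiction: blow up at \emph{all} cusps to the minimal smooth resolution (the proper transform becomes a smooth sphere of square $C\cdot C - \sum M(p_i) = 2\ell+2$, and the last exceptional divisor at the minimizing cusp $p_1$ meets it with multiplicity $\ell$); then blow up $\ell$ more times at $p_1$ to reach the normal crossing resolution and $\ell+1$ further times at the resulting intersection point, so that the proper transform is a smooth $+1$-sphere attached to a chain of $2\ell$ $(-2)$-spheres with a $(-\ell-1)$-sphere hanging off the middle vertex (configuration~\eqref{e:high-five}). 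McDuff identifies the $+1$-sphere with a line in a blow-up of $\cptwo$, and then Lemmas~\ref{l:hom}, \ref{l:2chainfix} and~\ref{l:pos} pin down the classes of the $(-2)$-chain as $e_1-e_2,\dots,e_{2\ell}-e_{2\ell+1}$ and force the $(-\ell-1)$-sphere to contain $e_1,\dots,e_{\ell+1}$ all with coefficient $-1$ plus at least one further $-1$ coefficient, so its square would be at most $-\ell-2$ --- a contradiction. So the ``sharpening localized at a single cusp'' is not a local positivity estimate at all, but the impossibility of consistently assigning homology classes to this global configuration; without that construction your outline does not close.
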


Finally, we look at symplectic filling classification results in some low complexity cases. In a rational cuspidal curve $C$, each singularity has an underlying arithmetic genus. This can be thought of as the genus that would be added to the curve $C$ by smoothing the cusp singularity. Adding up the arithmetic genus contributions from each singularity gives us the \emph{arithmetic genus} of the singular curve $C$. To give a sample of the filling classifications for these more general examples $(Y_C,\xi_C)$ we investigate examples where $C$ has low arithmetic genus, varying $s$ within a range the allows us to give complete classifications up to the bounds where Theorem~\ref{thm:bounds} applies. We include these classification results in Section~\ref{s:lowgenus}. In some cases, there is a unique minimal symplectic filling and in others there are multiple minimal symplectic fillings yielding potentially new symplectic cut-and-paste operations.

\subsubsection*{Organization}
In Section~\ref{s:AB}, we give Stein handlebodies for the fillings of $(Y_C,\xi_C)$ in the $\Ac_p$ and $\Bc_p$ cases, and prove the first two items of Theorem~\ref{thm:unicuspidal}. Next, in Section~\ref{s:E3E6}, we identify $(Y_C,\xi_C)$ with canonical contact manifolds in the $\Ec$ and $\Dc$ cases, and provide their Legendrian surgery diagrams, thus completing the proof of Theorem~\ref{thm:unicuspidal}. In Section~\ref{s:background}, we provide a brief summary of background from~\cite{bigGS} that we will need for the remaining sections. In Section~\ref{s:qbd} we determine which fillings of $(Y_C,\xi_C)$ are related by rational blow-downs when $C$ is a unicuspidal curve in $\cptwo$, proving Theorem~\ref{thm:qbdintro}. Next, we look at more general examples of $(Y_C,\xi_C)$. In Section~\ref{ss:bounds}, we prove Theorem~\ref{thm:bounds}. In Section~\ref{s:lowgenus}, we give classifications of fillings of $(Y_C,\xi_C)$ for cases when $C$ has small arithmetic genus. Finally, in Section~\ref{s:speculations} we provide some questions and conjectures that build off of the results we established.

\subsubsection*{Acknowledgements}
Early progress on this project was made during the BIRS conference in Oaxaca \emph{Thirty years of Floer Theory for 3-manifolds}, and the authors appreciate this excellent conference and opportunity to collaborate.
MG acknowledges hospitality from UC Davis and LS is grateful for hospitality from Universit\'{e} de Nantes. We also thank Fabien K\"utle and B\"ulent Tosun for helpful conversations related to this project. LS has been supported by NSF grants DMS 1904074 and DMS 2042345 during this project.
}

\section{Stein handlebodies for fillings in the $\Ac_p$ and $\Bc_p$ families}\label{s:AB}

Our first goal is to explicitly present Stein handlebody diagrams for the fillings of $(Y_C,\xi_C)$ in the case that $C$ is a rational unicuspidal curve in either the family $\Ac_p$ or $\Bc_p$. Such presentations are critical to study properties of the fillings (such as the types of surfaces they contain) and to compute their invariants. Additionally, the Stein handlebody induces a Legendrian surgery diagram for the boundary. These provide explicit ways to understand the corresponding cuspidal contact structures. 

In order to verify that our Stein handlebodies have the correct contact structure on the boundary, we will use a result of Matkovi\v c~\cite{Matkovic} which classifies contact structures on Seifert fibered L-spaces in terms their underlying \spinc structure. To encode the \spinc structure concretely, we will utilize Gompf's $\Gamma$-invariant~\cite{Gompf2}.
Gompf provided formulas to compute this invariant for contact manifolds appearing on the boundary of a Stein handlebody in terms of the Stein handlebody diagram. 
We will be interested in comparing contact structures arising on the boundary of a given Stein handlebody, with the contact structures $(Y_C,\xi_C)$ induced concavely on the boundary of a neighborhood of a rational unicuspidal curve. To facilitate this comparison, we will compute Gompf's $\Gamma$-invariant for the concavely induced structure $(Y_C,\xi_C)$. 

\subsection{Gompf's $\Gamma$-invariants for rational cuspidal contact manifolds} \label{ss:Gamma}

Gompf's $\Gamma$-invariant measures the $2$-dimensional obstruction to uniqueness of homotopy classes of $2$-plane fields on a $3$-manifold. Any $2$-plane field $\xi$ on a $3$-manifold $Y$ determines a \spinc structure on $Y$ (by determining a complex structure on $TY\oplus \R$). $\Spinc$ structures on $Y$ are affinely in bijection with $H^2(Y;\Z)$ through the corresponding free and transitive action. For a more canonical identification, one can let $\Fr(Y)\to Y$ denote the principal $SO(3)$-bundle of oriented orthonormal frames. Then $\Spinc(Y)$ is identified with the set of elements in $H^2(\Fr(Y);\Z)$ whose restrictions to the $SO(3)$ fibers yield the non-zero element of $H^2(SO(3);\Z)=\Z/2\Z$ (see~\cite[Chapter XI.1.2]{Turaev}). A choice of trivialization $\tau$ of $TY$ gives an identification of $\Fr(Y)$ with $Y\times SO(3)$, and thus yields an identification of $\Spinc(Y)$ with $H^2(Y;\Z)$ (note this is not canonical as it depends on the trivialization $\tau$). Gompf defines $\Gamma_\tau(\xi)\in H_1(Y;\Z)$ to be the Poincar\'{e} dual of the class in $H^2(Y;\Z)$ identified using $\tau$ with the \spinc structure determined by $\xi$. In~\cite[Proposition 4.1]{Gompf2}, he proves various properties including that $2\Gamma_\tau(\xi)$ is Poincar\'{e} dual to $c_1(\xi)$.
A trivialization $\tau$ of $TY$ determines a spin structure on $Y$ (since a spin structure is a trivialization of the tangent bundle over the 2-skeleton). For each spin structure $s$ on $Y$, Gompf defines an invariant $\Gamma(\xi,s)\in H_1(Y;\Z)$. Proposition 4.8 of~\cite{Gompf2} proves that if $s$ is the spin structure determined by a trivialization $\tau$ then $\Gamma_\tau(\xi) = \Gamma(\xi,s)$ (i.e. $\Gamma_\tau(\xi)$ only depends on the spin structure induced by $\tau$).

	We recall some background on spin and \spinc structures. When $Y$ is a rational homology 3-sphere, spin structures on $Y$ are in bijective correspondence with self-conjugate \spinc structures on $Y$. To see this, following Turaev~\cite[Chapter XI]{Turaev}, first identify spin structures on $Y$ with the elements of $H^1(\Fr(Y);\Z/2\Z)$ whose restrictions to the $SO(3)$ fibers are the non-trivial element ($\Fr(Y)$ is the $SO(3)$ bundle of oriented orthonormal frames over $Y$). Then the short exact sequence $\Z\to \Z\to \Z/2\Z$ induces the long exact sequence on cohomology
\[
\xymatrix{
	\ar[r] &H^1(\Fr(Y);\Z)\ar[r] &H^1(\Fr(Y);\Z/2\Z)\ar[r]^{\beta}& H^2(\Fr(Y);\Z) \ar[r]^{\times 2}& H^2(\Fr(Y);\Z) \ar[r]&
}
\]
The Bockstein homomorphism $\beta: H^1(\Fr(Y);\Z/2\Z)\to H^2(\Fr(Y);\Z)$ maps spin structures on $Y$ to \spinc structures on $Y$. Since $Y$ is a rational homology $3$-sphere, $H^1(\Fr(Y);\Z)=0$, so this map is injective. The image of the spin structures is precisely the \spinc structures $\fs_0$ such that $2\fs_0=0$, or equivalently $\fs_0=-\fs_0$ (self-conjugate).

Let $N$ be a standard concave neighborhood of a rational cuspidal curve $C$ of arithmetic genus $g$ and self-intersection $n>0$, with boundary $(Y,\xi)$ (we will omit the $C$ subscripts for the rest of this section to have cleaner notation). Note, the orientation on $Y$ \emph{disagrees} with the boundary orientation from $N$. Let $K$ denote the connected sum of the links of the singularities of $C$ (in the $\Ac_p$ and $\Bc_p$ cases, $K$ will be the corresponding torus knot). Then $N$ is diffeomorphic to the knot trace $X_{n}(K)$ (the manifold obtained by attaching a single $2$-handle to a $0$-handle along $K$ with framing $n$). Therefore $-Y$ has a surgery diagram consisting of the knot $K$ with framing $n$. Correspondingly $Y$ has a surgery diagram consisting of the mirror knot $m(K)$ with framing $-n$.

A spin structure on a $3$-manifold with a given integral surgery diagram can be encoded by a \emph{characteristic sublink} of the surgery diagram~\cite{Kaplan} (see~\cite[Proposition 5.7.11]{GompfStipsicz}). In a $3$-manifold obtained by integral surgery on $S^3$ along a link $L$, a sublink $L'\subset L$ is \emph{characteristic} when, for each component $K_i$ of $L$, the framing of $K_i$ is congruent modulo $2$ to $\lk(K_i,L')$. In the surgery diagram for $-Y$, which consists of a single component with framing $n$ the only sublinks are the full link and the empty link. In the case that $n$ is even, both of these are characteristic and there are two spin structures on $-Y$. In the case that $n$ is odd, only the non-empty sublink is characteristic and there is a unique spin structure. In either case, let $s_0$ denote the spin structure on $-Y$ which corresponds to the non-empty characteristic sublink.

\begin{theorem}\label{thm:Gamma(xi,s0)}
	Let $(Y,\xi) = (Y_C,\xi_C)$, where $C$ is a rational cuspidal curve of arithmetic genus $g$ and self-intersection $n$.
	Let $K$ be the connected sum of the links of the singularities as above. Consider the surgery diagram for $-Y$ given by $n$-surgery on $K$. Then
	\[
	\Gamma(\xi,s_0) = (1-g)\mu \in H_1(-Y),
	\]
	where $s_0$ is the unique spin structure on $-Y$ represented by the characteristic sublink $K$, and $\mu$ is the homology class represented by the meridian of $K$. 
\end{theorem}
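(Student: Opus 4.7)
The plan is to compute $c_1(\fs_\xi)$ via the adjunction formula on the concave neighborhood $N$ and then invoke the general relation $2\,\Gamma(\xi, s) = \PD(c_1(\xi))$ from~\cite{Gompf2}. The symplectic structure on $N$ induces a compatible almost complex structure $J$, and $\xi$ is its induced boundary field, so $\fs_\xi = \fs_J|_{-Y}$. Identifying $N$ with the trace $X_n(K)$, we have $H_2(N;\Z) = \Z\langle[C]\rangle$, $H_1(N) = 0$, and $H^2(N;\Z) = \Z\langle[C]^*\rangle$. The adjunction formula for rational cuspidal curves (combining the standard adjunction with Milnor's formula expressing the arithmetic genus $g$ as the sum of the delta invariants of the singularities) yields $\langle c_1(\fs_J), [C]\rangle = 2 - 2g + n$, so $c_1(\fs_J) = (2 - 2g + n)\,[C]^*$.

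Next I would restrict $c_1(\fs_J)$ to $-Y$. Since $-Y$ is a rational homology sphere, the long exact sequence of $(N,-Y)$ reduces to $0 \to H_2(N) \xrightarrow{\times n} H_2(N,-Y) \to H_1(-Y) \to 0$, so $H^2(-Y) = \Z/n$, and the cocore of the $2$-handle identifies the restriction of $[C]^*$ with $\mu$ under Poincar\'e duality. This yields $\PD(c_1(\fs_\xi)) = (2 - 2g)\mu \in H_1(-Y) = \Z/n$, hence $\Gamma(\xi, s_0) = (1-g)\mu$ modulo $2$-torsion. For odd $n$ the target group is $2$-torsion free and the proof is complete.

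For even $n$ the $2$-torsion ambiguity must be resolved. In this case $c_1(\fs_J)$ is divisible by $2$ in $H^2(N;\Z) = \Z$, so $N$ is spin, and its unique spin structure restricts to the spin structure $s_1$ on $-Y$ associated (by the convention of~\cite[Proposition~5.7.11]{GompfStipsicz}) with the \emph{empty} characteristic sublink. Since $c_1(\fs_{s_1}) = 0$, dividing $c_1(\fs_J) = 2(\fs_J - \fs_{s_1})$ by $2$ in $H^2(N;\Z)$ and restricting lifts $\Gamma$ canonically and gives $\Gamma(\xi, s_1) = (1 - g + n/2)\mu$. Finally, $s_0$ and $s_1$ differ by the unique nontrivial class in $H^1(-Y;\Z/2)$, whose Bockstein is the order-$2$ element $(n/2)[C]^*|_{-Y}$ of $H^2(-Y;\Z)$; subtracting the corresponding shift $(n/2)\mu$ from $\Gamma(\xi, s_1)$ yields the desired equality $\Gamma(\xi, s_0) = (1-g)\mu$.

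The main obstacle I expect is precisely this even-$n$ bookkeeping: one must verify that the characteristic sublink convention identifies $s_1$ (not $s_0$) as the spin structure extending over $N$, and then check that the Bockstein correction between $s_0$ and $s_1$ contributes exactly the shift by $(n/2)\mu$ that compensates the $n/2$ produced by dividing $c_1$ on $N$. The adjunction computation and the boundary restriction are routine; the care lies in tracking spin-structure conventions so that the $2$-torsion piece comes out on the nose.
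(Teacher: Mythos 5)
Your proposal is correct and follows essentially the same route as the paper's proof: adjunction on the concave neighborhood $N$ gives $\langle c_1(\fs_J),[C]\rangle = 2-2g+n$, the torsion-free $H^2(N;\Z)$ lets you divide by $2$ to pin down the spin$^c$ structure, and the empty-sublink convention identifies which spin structure on $-Y$ extends over $N$. The only cosmetic difference is that the paper fixes a reference spin$^c$ structure $\fs_0$ with $c_1(\fs_0)=nG$ (which restricts directly to the non-empty-sublink spin structure $s_0$) and so handles both parities of $n$ uniformly, whereas you settle odd $n$ from $2\Gamma=\PD(c_1)$ and then do the even-$n$ two-torsion correction explicitly; both computations agree.
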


\begin{proof}
	
	To set notation, we begin with some basic facts about homology and cohomology. $H_2(N;\Z)\cong \Z$ with intersection form $\langle n \rangle$ and is generated by $[C]$. $H^2(N;\Z)\cong \Z$ and is generated by a class $G$ where $\langle G, [C]\rangle =1$. 
	On the boundary, $H^2(-Y;\Z)\cong H_1(-Y;\Z)\cong \Z/n\Z$, generated by the meridian $\mu$ of the surgery knot. 
	The restriction map $i^*: H^2(N;\Z)\to H^2(-Y;\Z)$ sends $G$ to the Poincar\'e dual of $\mu$.
	
	Since, spin$^c$ structures on a manifold $M$ are affinely identified with $H^2(M;\Z)$, given any two \spinc structures $\fs$ and $\fs'$ on $N$, their difference $\fs'-\fs$ represents a class in $H^2(N;\Z)$. Under this identification, $c_1(\fs')-c_1(\fs) = 2(\fs'-\fs)$.
	
	We will first fix a \emph{reference \spinc structure}, $\fs_0$ on $N$, defined by the property that
	\[
	\langle c_1(\fs_0), [C]\rangle =n.
	\]
	One important property of this reference \spinc structure is that its restriction to $\partial N=-Y$ is self-conjugate. To see this, observe that
	\[
	\langle c_1(\overline{\fs}_0), [C]\rangle =-n,
	\]
	so $\langle c_1(\fs_0)-c_1(\overline{\fs}_0), [C]\rangle = 2n$. Therefore $\fs_0-\overline{\fs}_0 = n G \in H^2(N;\Z)$. Let $i^*: H^2(N;\Z)\to H^2(-Y;\Z)$ denote the restriction map to the boundary. Then $i^*(\fs_0-\overline{\fs}_0) = n\PD(\mu) = 0\in H^2(-Y;\Z)$. Therefore, $i^*(\fs_0)$ is self-conjugate so it corresponds to a spin structure $s_0$ on $-Y$.
	
	Recall the correspondence described above between spin structures on $-Y$ and characteristic sublinks of the surgery diagram for $-Y$ given by $n$-surgery on $K$. 
	To see that the empty sublink does not correspond to $s_0$ when $n$ is even, consider the unique \spinc structure $\ft$ on $N$ which satisfies $\langle c_1(\ft), [C]\rangle =0$. This \spinc structure is self-conjugate on all of $N$, since $\ft-\overline{\ft}=0\in H^2(N;\Z)$. Therefore $\ft$ is the image of the unique spin structure on $N$ under the corresponding Bockstein homomorphism $\beta: H^1(\Fr(N);\Z/2\Z)\to H^2(\Fr(N);\Z)$. 
	Next, observe that $\langle c_1(\fs_0)-c_1(\ft), [C]\rangle =n$, so $\fs_0-\ft=\frac{n}{2}G\in H^2(N;\Z)$, and the restriction to $-Y$, $i^*(\fs_0-\ft)=\frac{n}{2}\PD(\mu)\in H^2(-Y;\Z)$. Since $\frac{n}{2}\PD(\mu)$ is a non-zero element in $H^2(-Y;\Z)$, $i^*(\fs_0)\neq i^*(\ft)$. Therefore these two self-conjugate \spinc structures on $-Y$ correspond to distinct spin structures on $-Y$. Since $i^*(\ft)$ corresponds to the empty sublink (by definition, the empty sublink of an integral surgery diagram corresponds to the spin structure which extends over the corresponding 4-manifold described as that 2-handlebody), $s_0$ must correspond to the non-empty sublink.
		
	Now, we consider two \spinc structures on $N$, our reference \spinc structure $\fs_0$ and the \spinc structure $\fs_J$, coming from an almost complex structure on $N$ compatible with the symplectic structure, such that the complex tangencies of $J$ along $Y$ give the contact structure $\xi$. The difference of these two \spinc structures corresponds to a class $\fs_J-\fs_0\in H^2(N;\Z)$. We know that $2(\fs_J-\fs_0) = c_1(\fs_J)-c_1(\fs_0)$, so since $H^2(N;\Z)\cong \Z$ has no $2$-torsion, we can determine the class $\fs_J-\fs_0$ uniquely from computing the Chern classes.
	
	By definition, we chose $\fs_0$ to have $c_1(\fs_0)=nG$ (where $G$ is the generator dual to $[C]$). We can use the adjunction formula to compute
	\[
	\langle c_1(\fs_J), [C]\rangle = 2-2g+n.
	\]
	(This can be seen by replacing $C$ by a curve which smooths its singularities symplectically, thus increasing the genus of $C$ to its arithmetic genus without changing the homology class it represents.)
	
	Therefore $c_1(\fs_J)-c_1(\fs_0) = (2-2g+n-n)[G]$, so 
	\[
	\fs_J-\fs_0 = (1-g)G \in H^2(N;\Z).
	\]
	
	Restricting this class to the boundary, $\partial N = -Y$, we have $i^*(\fs_J)-i^*(\fs_0) = (1-g)\PD(\mu)$.
	
	Now recall the definition of $\Gamma(\xi,s_0)$ from the beginning of this subsection. Since $\xi$ is given by the complex tangencies to $-Y$ under the almost complex structure $J$, $\fs_J$ restricts to the \spinc structure on $-Y$ determined by $\xi$. If $\tau$ is a trivialization that induces the spin structure $s_0$, then $\Gamma(\xi,s_0)$ is the Poincar\'{e} dual to the class in $H^2(-Y;\Z)$ corresponding to the \spinc structure $i^*(\fs_J)$ (where the correspondence depends on $\tau$). Note that under this correspondence the \spinc structure $i^*(\fs_0)$ corresponds to the $0$ element in $H^2(-Y;\Z)$. Therefore, $\Gamma(\xi,s_0)=\PD(i^*(\fs_J)-i^*(\fs_0))$, and the result follows.
\end{proof}

\subsection{Stein handlebodies for fillings}

Observe that any symplectic filling of $(Y_C,\xi_C)$ can be capped by a concave neighborhood of the cuspidal curve $C$ to form a closed symplectic manifold. Equivalently, every symplectic filling of $(Y_C,\xi_C)$ arises as the complement of a neighborhood of a symplectic embedding of $C$ into a closed symplectic manifold. When $C$ is a rational and unicuspidal curve in $\cptwo$, we classify such embeddings in~\cite[Theorem 6.5]{bigGS}. The goal of this section is to turn this perspective upside down, by translating our understanding of the embedding of $C$ into a concrete description of the Stein domain in its complement. 

For smooth handle decompositions, we can literally turn handle diagrams upside down (negating the Morse function) to obtain handle decompositions for many complements. However, it is not clear that a given smooth handle decomposition for a complement will support a Stein structure. Since there is not a notion of symplectic handle attachment for closed $4$-manifolds, the strategy of turning things upside down does not suffice to identify a Stein handle structure on the complement, so we will use additional arguments to show we have identified the correct geometric structure.

We start with the case where $C$ is in the family $\Ac_p$. In this case $C$ has a unique singularity whose link is $T(p,p+1)$ so its arithmetic genus is $\frac{p(p-1)}{2}$. The self-intersection number is $(p+1)^2$. Then~\cite[Theorem 6.5]{bigGS} tells us that the only minimal symplectic embedding of $C$ into a closed symplectic manifold is into $\cptwo$ and that there is a unique symplectic isotopy class of such an embedding. As a consequence $(Y_C,\xi_C)$ has a unique minimal symplectic filling, which is an affine complex surface. Our aim is to give a Stein handlebody description of this object.
	
\begin{theorem}\label{thm:ApStein}
Fix $p > 1$, and let $C$ be a curve of type $\Ac_p$. Then the unique filling of $(Y_C,\xi_C)$ is given by the Stein handlebody $W_\hb$ given by the diagram in Figure~\ref{fig:ApStein}  (up to symplectic deformation and symplectomorphism).
\end{theorem}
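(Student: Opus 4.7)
The plan is to first establish that the Stein handlebody $W_\hb$ in Figure~\ref{fig:ApStein} is a minimal symplectic filling of $(Y_C,\xi_C)$; once this is done, the uniqueness part of the theorem follows immediately. Indeed, \cite[Theorem 6.5]{bigGS} classifies symplectic embeddings of $\Ac_p$ curves in closed symplectic $4$-manifolds and shows that every such embedding is isotopic to the unique one into $\cptwo$, so the complement of a standard concave neighborhood of $C$ yields, up to symplectic deformation, a unique minimal symplectic filling of $(Y_C,\xi_C)$.

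First I would verify the smooth identification $\partial W_\hb \cong Y_C$. Since $Y_C$ bounds the concave neighborhood $N \cong X_{(p+1)^2}(T(p,p+1))$ with reversed orientation, $Y_C$ has a surgery description consisting of the mirror of $T(p,p+1)$ with framing $-(p+1)^2$. I would read off the underlying smooth handle diagram of $W_\hb$ and reduce its boundary to this canonical form via a sequence of handle slides together with blow-ups and blow-downs; this is the bulk of the geometric content.

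Second, I would identify the contact structure $\xi_\hb$ induced on $\partial W_\hb$ by the Stein structure with $\xi_C$. The key input is that $Y_C$ is a Seifert fibered L-space, so Matkovi\v c's classification~\cite{Matkovic} reduces the problem to matching the underlying \spinc structures on the two sides. To test this, I would compute Gompf's $\Gamma$-invariant with respect to the distinguished spin structure $s_0$ of Section~\ref{ss:Gamma}, i.e.\ the one associated with the non-empty characteristic sublink in the surgery diagram for $-Y_C$. On the concave side, Theorem~\ref{thm:Gamma(xi,s0)} gives directly
\[
\Gamma(\xi_C, s_0) = (1-g)\,\mu \in H_1(-Y_C), \qquad g = \tfrac{p(p-1)}{2}.
\]
On the Stein side, I would apply Gompf's formula from~\cite{Gompf2} to the Legendrian diagram of Figure~\ref{fig:ApStein}: this amounts to assembling the rotation numbers of the Legendrian components with the appropriate linking data involving the characteristic sublink, and then transporting the answer to $H_1(-Y_C)$ via the diffeomorphism of the previous step.

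The main obstacle is the bookkeeping required to compare these two $\Gamma$-invariants: one must track how the generators of $H_1(\partial W_\hb)$, naturally expressed via meridians of the Stein $2$-handles, identify under the Kirby calculus equivalence with the meridian $\mu$ of the cusp, and one must choose the trivialization on the Stein side so that it genuinely induces $s_0$. Once the invariants match, Matkovi\v c's theorem forces $\xi_\hb = \xi_C$, and since the handle diagram of $W_\hb$ contains no exceptional sphere, $W_\hb$ is itself minimal and hence coincides with the unique minimal filling of $(Y_C,\xi_C)$ up to symplectic deformation and symplectomorphism.
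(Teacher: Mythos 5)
Your proposal follows essentially the same route as the paper's proof: reduce to showing $W_\hb$ is a minimal filling via the uniqueness from~\cite[Theorem 6.5]{bigGS}, identify the smooth boundary by Kirby calculus, and then match contact structures by comparing Gompf's $\Gamma$-invariant relative to $s_0$ (computed via Theorem~\ref{thm:Gamma(xi,s0)} on the concave side and Gompf's formula on the Stein side) and invoking Matkovi\v{c}'s classification for Seifert fibered L-spaces. The only work left implicit is the actual bookkeeping you flag — tracking $\mu$ and the characteristic sublink through the Kirby moves and splitting into cases by the parity of $p+1$ — which is exactly the content of the paper's computation.
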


\begin{figure}[h!]
	\centering
	\includegraphics[scale=.6]{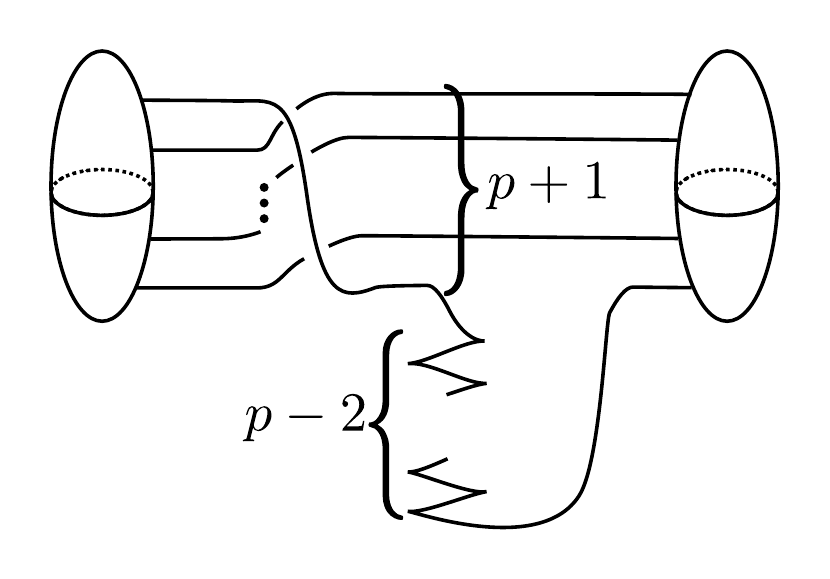}
	\caption{Stein handlebody for the filling corresponding to the $\Ac_p$ family. The framing on the $2$-handle is $tb-1 = 1$.}
	\label{fig:ApStein}
\end{figure}

That the filling has a handle decomposition with only one 1-handle and one 2-handle was to be expected from the proof of~\cite[Theorem~6.5]{bigGS}. Indeed, in that proof we show that, letting $T$ be the tangent to the cusp of $C$, the configuration $C \cup T$ is birationally equivalent to the configuration of two lines in $\cptwo$, and in particular its complement is diffeomorphic to $S^1\times D^3$. The handle decomposition corresponding to adding a neighborhood of $T \setminus C$ corresponds to attaching a 2-handle. In fact, using this argument one obtains the (smooth) Kirby diagram underlying Figure~\ref{fig:ApStein}.

\begin{proof}
To simplify notation, we will write $(Y,\xi)$ instead of $(Y_C,\xi_C)$. Since $Y$ is the boundary of a neighborhood of the rational cuspidal curve of type $\Ac_p$ (with the orientation reversed), $Y = -S^3_{(p+1)^2}(T(p,p+1))$; it follows, for instance from~\cite[Lemma~4.4]{OwensStrle}, that $Y$ is a small Seifert fibered space with Seifert invariants $(-1;\frac{p}{p+1},\frac1p,\frac1{p+1})$ (see Figure~\ref{f:Seifert} for the Seifert fibered notation).

Since $(p^2+p+1)$-surgery along $T(p,p+1)$ is a lens space~\cite{Moser}, the torus knot $T(p,p+1)$ is an L-space knot. Since $(p+1)^2 \ge 2g(T(p,p+1)) - 1$, by~\cite[Proposition~4.1]{LiscaStipsicz-I} $Y$ is a Heegaard Floer L-space.

Matkovi\v{c} proved in~\cite[Theorem~1.3]{Matkovic} that two tight contact structures on a small Seifert fibered L-space are isotopic if and only if they induce the same spin$^c$ structure. We will compute Gompf's $\Gamma$-invariant (with respect to the same spin structure on $Y$) for the contact structure $\xi_{\rm hb}$ described by the Stein handlebody of Figure~\ref{fig:ApStein} and use Theorem~\ref{thm:Gamma(xi,s0)} to say that $\Gamma(\xi,s_0) = \Gamma(\xi_{\rm hb}, s_0)$; that is, $\xi_{\rm hb}$ and $\xi$ induce the same spin$^c$ structure. Since both $\xi_{\rm hb}$ and $\xi$ are tight, by Matkovi\v{c}'s result they are isotopic. Note that when $p+1$ is odd computing the first Chern class of $\xi_{\rm hb}$ and $\xi$ would suffice. However, when $p+1$ is even, $H_1(Y)$ contains 2-torsion, so knowing that the first Chern classes agree does not suffice to prove that the spin$^c$ structures are the same.


\begin{figure}
	\centering
	\includegraphics[scale=.5]{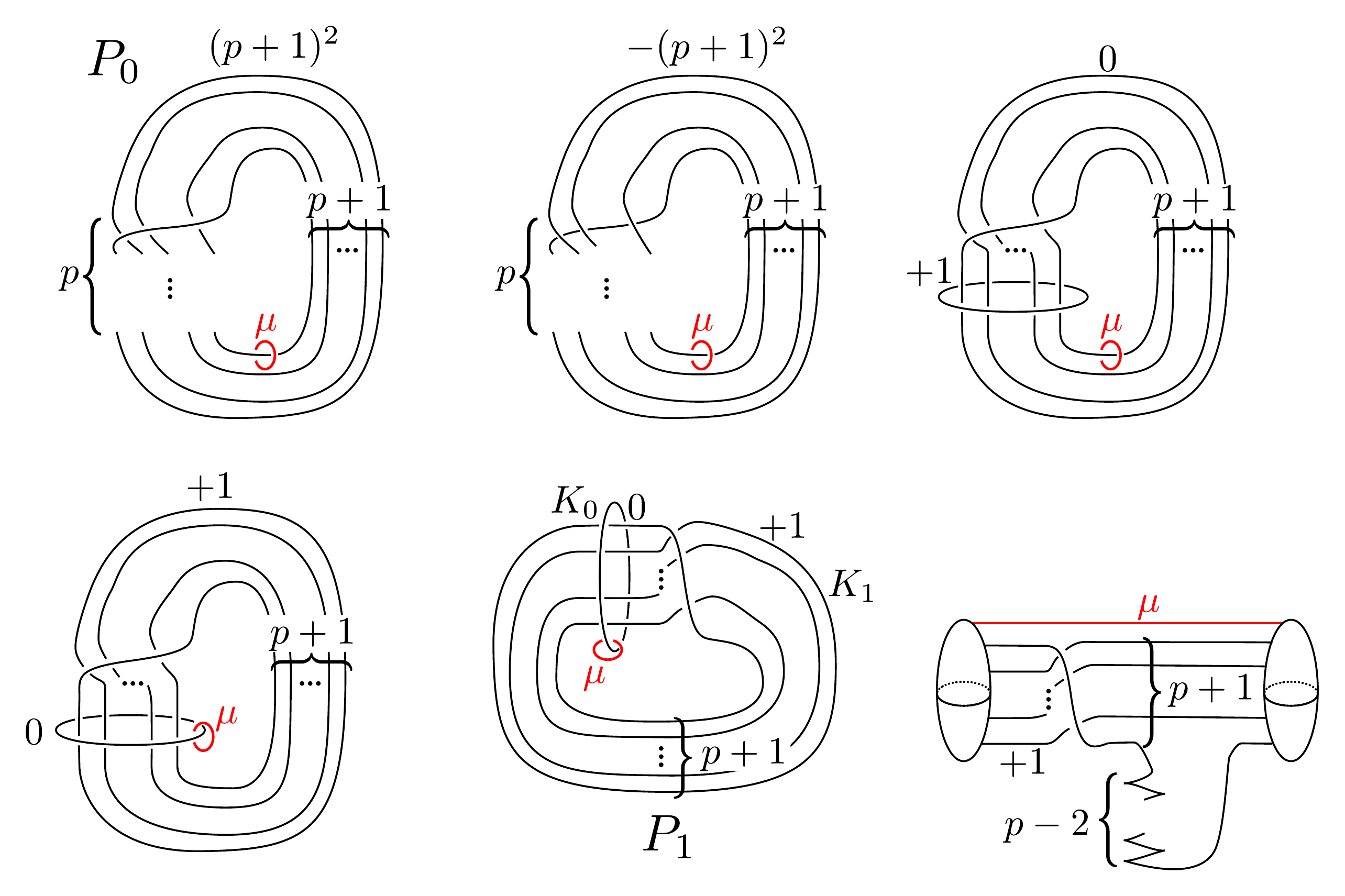}
	\caption{Kirby calculus moves relating surgery presentation $P_0$ to surgery presentation $P_1$, where the surgery presentation $P_1$ is induced by the Stein handlebody in the bottom right.}
	\label{fig:Kirbycalc}
\end{figure}

We have one surgery presentation, $P_0$, for $-Y$ given by $(p+1)^2$-surgery on $T(p,p+1)$. In order to compare $\Gamma$-invariants, we need to relate this to the surgery diagram for $Y$ given as the boundary of $W_\hb$. Note that in order to obtain a surgery diagram for $Y$ from the handle diagram for $W_\hb$ of Figure~\ref{fig:ApStein}, we need to switch to dotted circle notation for the $1$-handle, and then replace the dotted circle with a $0$-framed circle. The resulting diagram represents a smooth $4$-manifold $W^*_\hb$ obtained by surgering out the $S^1\times D^3$ going around the $1$-handle and replacing it by $S^2\times D^2$, but its boundary is unchanged. We will refer to this surgery diagram for $Y$ as $P_1$, and will denote its two components by $K_0$ and $K_1$ where $K_0$ is $0$-framed and $K_1$ is $+1$-framed. (To see the framing on $K_1$ from Figure~\ref{fig:ApStein}, calculate $\tb-1$ using~\cite[Equation (1.1)]{Gompf2}, using the framing convention of~\cite[page~634]{Gompf2}.) To relate $P_0$ and $P_1$, we first reverse the orientation of $P_0$, and then perform a sequence of Kirby calculus moves as in Figure~\ref{fig:Kirbycalc}. The first Kirby calculus move is an anti-blow-up, which introduces a $+1$-framed unknot and adds a full positive twist into the strands of the mirror of $T(p,p+1)$ and changes its framing to $0$. This results in a symmetric link, so there is an isotopy which exchanges the two components, which results in the final surgery presentation $P_1$. This sequence of Kirby calculus moves provides an explicit diffeomorphism identifying the two surgery presentations. 
We can carry the curve in $Y$ given by the meridian $\mu$ of $T(p,p+1)$ through the Kirby calculus moves to see its image under this diffeomorphism. Following Figure~\ref{fig:Kirbycalc}, we see that $\mu$ is sent to the meridian of $K_0$ in the surgery presentation $P_1$, or equivalently a curve which passes once through the $1$-handle in the boundary of $W_\hb$.


In the statement of Theorem~\ref{thm:Gamma(xi,s0)}, the spin structure $s_0$ is represented in the surgery presentation $P_0$ by the non-empty characteristic sublink, i.e. $T(p,p+1)$. 
We would like to identify this characteristic sublink in the surgery presentation $P_0$ with a characteristic sublink of the surgery presentation $P_1$. Note that, when $p+1$ is odd, there is a unique characteristic sublink $K_0$ in the surgery diagram $P_1$.
When $p+1$ is even, there are two characteristic sublinks of $P_1$: $K_1$ and $K_0\cup K_1$, so we need to determine which of these corresponds to $s_0$. 
It is explained in~\cite[Pages 190--191]{GompfStipsicz} how to track the characteristic sublink through Kirby calculus moves, and specifically through a blow-up. The characteristic sublink during a blow-up is unchanged except that the new $+1$-framed unknotted component is included if and only if it has even linking number with the previous characteristic sublink. Therefore, after we push $s_0$ through the Kirby calculus moves, it is represented by the sublink $K_0\cup K_1$ when $p+1$ is even, and by $K_0$ when $p+1$ is odd. See Figure~\ref{fig:Kirbyspin}.

\begin{figure}
	\centering
	\includegraphics[scale=.5]{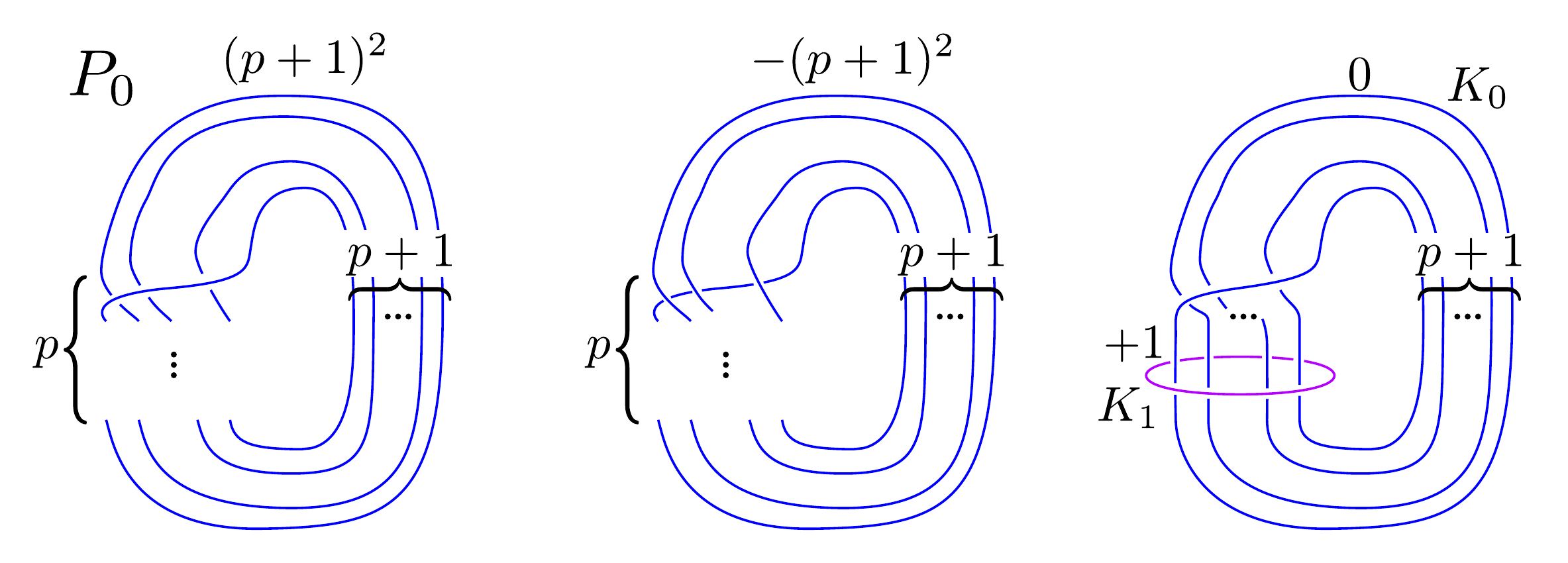}
	\caption{The characteristic sublink in $P_0$ represented by the colored $T(p+1,p)$ is carried through the Kirby calculus moves. Reversing orientation keeps the same component in the characteristic sublink. The anti-blow-up adds the $+1$-framed component $K_1$ to the characteristic sublink if and only if $\lk(K_1,K_0)=p+1$ is even.}
	\label{fig:Kirbyspin}
\end{figure}

Given a Stein handlebody diagram in standard form, Gompf provides a formula to determine the $\Gamma$-invariant for the contact structure induced on the boundary. Let $\xi_{\hb}$ denote the contact structure induced on the boundary of the Stein handlebody $W_\hb$. Recall that $P_1$ is a handlebody diagram for a $4$-manifold $W^*_\hb$ obtained by surgering $W_\hb$. The $2$-handles attached to $K_0$ and $K_1$ determine homology classes $\alpha_0$ and $\alpha_1$ respectively which form a basis for $H_2(W^*_\hb;\Z)$ ($\alpha_i$ is represented by a surface obtained by capping off a Seifert surface for $K_i$). By~\cite[Theorem~4.12]{Gompf2}, $\Gamma(\xi_\hb,s_0)$ is Poincar\'e dual to the cohomology class $j^*\rho \in H^2(Y;\Z)$, where $j\colon Y \to W^*_\hb$ is the inclusion and $\rho \in H^2(W^*_\hb;\Z)$ satisfies:
\begin{equation}\label{eq:computerho}
\left\{
\begin{array}{l}
\vphantom{\displaystyle\frac12} \langle \rho, \alpha_0 \rangle = \frac12 \lk(K_0, L' + K_0),\\
\vphantom{\displaystyle\frac12} \langle \rho, \alpha_1 \rangle = \frac12\left( \rot(K_1) + \lk(K_1, L' + K_0) \right),
\end{array}
\right.
\end{equation}
where $L'$ is the characteristic sublink of $W^*_\hb$ corresponding to $s_0$ (namely if $p+1$ is odd $L'=K_0$ and if $p+1$ is even $L'=K_0\cup K_1$). 
Note that, as observed in~\cite[Section~4]{Gompf2}, the link $L' + K_0$ is to be interpreted modulo 2; in particular, if $K_0 \subset L'$ then $L' + K_0 = L' \setminus K_0$.

$H^2(W^*_\hb;\Z)$ is Poincar\'e dual to $H_2(W^*_\hb,Y;\Z)$, which is generated by the classes of the meridian disks $D_0$ and $D_1$ to $K_0$ and $K_1$, respectively. 
Since $\langle \rho, \alpha_i \rangle = \PD(\rho)\cdot \alpha_i$, and $D_i$ is a geometric dual of $\alpha_i$, we have that
\[
\PD(\rho) = \frac12 \lk(K_0, L' + K_0)[D_0] + \frac12\left( \rot(K_1) + \lk(K_1, L' + K_0) \right)[D_1].
\]

We also have that $\PD(j^*(\rho)) = \partial(\PD(\rho))$ as in the following commutative diagram:
\[
\xymatrix{  H^2(W^*_\hb;\Z) \ar[r]^{j^*} \ar[d]^{\PD} & H^2(Y;\Z) \ar[d]^{\PD}\\ H_2(W^*_\hb,Y;\Z) \ar[r]^\partial & H_1(Y;\Z)   }
\]

The boundary map $\partial\colon H_2(W^*_\hb,Y;\Z) \to H_1(Y;\Z)$ sends $[D_0]$ to $[\partial D_0] = \mu \in H_1(Y;\Z)$. Using the fact that a presentation of $H_1(Y;\Z)$ can be obtained from the surgery diagram $P_1$, with generators $[\partial D_0]$ and $[\partial D_1]$ and relations presented by the intersection matrix $\left(\begin{smallmatrix}0 & p+1\\ p+1 & 1\end{smallmatrix}\right)$, we see that
\[
[\partial D_1] = -(p+1)[\partial D_0] = -(p+1)\mu \quad \textrm{and}\quad (p+1)^2\mu=0.
\]

Therefore
\begin{align*}
\Gamma(\xi_\hb,s_0) &= \PD(j^*\rho) = \partial \PD(\rho) = \frac12 \lk(K_0, L' + K_0)[\partial D_0] + \frac12\left( \rot(K_1) + \lk(K_1, L' + K_0) \right)[\partial D_1]\\
& = \frac12 \left( \lk(K_0, L' + K_0) -(p+1)\left( \rot(K_1) + \lk(K_1, L' + K_0) \right) \right) \mu.
\end{align*}

Finally, by counting the number of downward cusps minus the number of upward cusps in the Stein handlebody diagram of Figure~\ref{fig:ApStein}, we see that $\rot(K_1) = p-2$.

Let us now look at the two cases, according to the parity of $p+1$.

If $p+1$ is odd, $L' = K_0$, so we can take the empty link as a mod 2 representative of $L'+K_0$. Therefore,
\begin{align*}
\Gamma(\xi_\hb,s_0) &= \frac12 \left( \lk(K_0, \emptyset) -(p+1)\left( \rot(K_1) + \lk(K_1, \emptyset) \right) \right) \mu\\
& = -\frac{(p+1)(p-2)}{2}\mu = \frac{-p^2+p+2}{2}\mu.
\end{align*}

If $p+1$ is even $L' = K_0\cup K_1$, so we can take $K_1$ as a mod 2 representative of $L'+K_0$. Therefore,
\begin{align*}
\Gamma(\xi_\hb,s_0) &= \frac12 \left( \lk(K_0, K_1) -(p+1)\left( \rot(K_1) + \lk(K_1, K_1) \right) \right) \mu\\
& = \frac{p+1-(p+1)(p-2+1)}{2}\mu = \frac{-p^2+p+2}{2}\mu.
\end{align*}

Comparing this with the calculation from Proposition~\ref{thm:Gamma(xi,s0)} with $g=\frac{p(p-1)}{2}$ (the arithmetic genus of the singularity with link $T(p,p+1)$), we see that
\[
\Gamma(\xi,s_0) = \left(1-\frac{p(p-1)}{2}\right)\mu = \frac{-p^2+p+2}{2}\mu = \Gamma(\xi_{\hb},s_0).
\]
Therefore, by Matkovi\v{c}'s result, $\xi$ and $\xi_{\hb}$ are contactomorphic. Since $\xi$ has a unique minimal symplectic filling, $W_\hb$ must be this filling (up to symplectic deformation and symplectomorphism).
\end{proof}

For symplectic curves in the $\Bc_p$ family, we prove in~\cite[Theorem 6.5]{bigGS}, that there are exactly two relatively minimal symplectic embeddings of $C$ into closed symplectic manifolds up to symplectic isotopy: one embedding is into $\cptwo$ and the other is into $S^2\times S^2$. Consequently, there are exactly two minimal symplectic fillings of $(Y_C,\xi_C)$ (up to symplectic deformation and symplectomorphism): one is a rational homology ball and the other has $b_2=1$.
		
\begin{theorem}\label{p:Bp}
Fix $p > 1$, and let $C$ be the rational cuspidal curve of type $\Bc_p$. Then the unique rational homology ball filling of $(Y_C,\xi_C)$ is given by the Stein handlebody depicted in Figure~\ref{fig:BpStein}, and the unique filling of $(Y_C,\xi_C)$ with $b_2=1$ is given by the Stein handlebody of Figure~\ref{fig:BpStein2}.
\end{theorem}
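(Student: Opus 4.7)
The plan is to mirror the proof of Theorem~\ref{thm:ApStein}, running the argument once for each of the two Stein handlebodies $W_{\hb,1}$ (Figure~\ref{fig:BpStein}) and $W_{\hb,2}$ (Figure~\ref{fig:BpStein2}). From~\cite[Theorem~6.5]{bigGS} we already know that $(Y_C,\xi_C)$ has at most two minimal symplectic fillings up to symplectic deformation, a rational homology ball and one with $b_2 = 1$; so it will suffice to exhibit $W_{\hb,1}$ as a rational homology ball Stein filling, exhibit $W_{\hb,2}$ as a Stein filling with $b_2 = 1$, and verify that in each case the induced contact structure on the boundary is $\xi_C$. The topological count ($b_2$ and the Euler characteristic) can be read directly off the handlebodies, and it will be immediate that the two fillings have different rational homology, so that the two handlebodies represent the two distinct fillings.

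To identify the contact structure, first I would check that $Y_C$ is a small Seifert fibered L-space: since $Y_C = -S^3_{4p^2}(T(p,4p-1))$ and $T(p,4p-1)$ is an L-space knot with $2g(T(p,4p-1))-1 = (p-1)(4p-2)-1 \le 4p^2$, the criterion of~\cite[Proposition~4.1]{LiscaStipsicz-I} applies, and the Seifert presentation follows from~\cite[Lemma~4.4]{OwensStrle}. Then Matkovi\v c's theorem~\cite[Theorem~1.3]{Matkovic} reduces the identification of each $\xi_{\hb,i}$ with $\xi_C$ to matching Gompf's $\Gamma$-invariant relative to the reference spin structure $s_0$. By Theorem~\ref{thm:Gamma(xi,s0)}, applied with the arithmetic genus $g = (p-1)(2p-1)$ of the cusp of type $(p,4p-1)$, we have
\[
\Gamma(\xi_C, s_0) = \bigl(1-(p-1)(2p-1)\bigr)\mu = (-2p^2+3p)\,\mu \in H_1(-Y_C).
\]

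Computing $\Gamma(\xi_{\hb,i}, s_0)$ proceeds exactly as in the $\Ac_p$ case. For each handlebody I would convert the 1-handle notation into dotted-circle notation and replace the dotted circle with a $0$-framed unknot to obtain an integral surgery presentation $P_i$ for $Y_C$, reading the 2-handle framing $\tb-1$ and rotation number from the Legendrian picture via~\cite[Equation~(1.1)]{Gompf2}. Then I would perform Kirby calculus relating $P_i$ to the presentation $(-1)\cdot((4p^2)$-surgery on $T(p,4p-1))$, using a sequence of anti-blow-ups and handle slides analogous to Figure~\ref{fig:Kirbycalc}, tracking the meridian $\mu$ and the characteristic sublink representing $s_0$ through these moves using the rule from~\cite[Pages~190--191]{GompfStipsicz} (a new $\pm 1$-framed component joins the characteristic sublink exactly when it has even linking number with the current one). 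With that data in hand, the formula~\eqref{eq:computerho} and the boundary identification $\PD \circ j^* = \partial \circ \PD$ yield a closed expression for $\Gamma(\xi_{\hb,i}, s_0)$ as a multiple of $\mu$; I would then verify that this multiple reduces to $-2p^2+3p$ modulo $4p^2$ for each $i$, possibly splitting into parity cases as in the $\Ac_p$ proof.

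The main obstacle will be the Kirby-calculus bookkeeping: for each of the two handlebodies the linking matrix is larger than in the $\Ac_p$ case, and presenting $H_1(Y_C)$ via the $2\times2$ or $3\times 3$ intersection form of $P_i$ to convert the generators $[\partial D_j]$ to multiples of $\mu$ is the place where arithmetic mistakes are most likely. The parity analysis of the characteristic sublink during anti-blow-ups must also be redone for each handlebody, with the relevant linking numbers and the framing $4p^2$ (which is always even) dictating which sublinks are characteristic. Once the $\Gamma$-invariants match, Matkovi\v c's theorem together with Stein tightness of $\xi_{\hb,i}$ gives $\xi_{\hb,i} \cong \xi_C$, and the classification of fillings from~\cite[Theorem~6.5]{bigGS} forces $W_{\hb,1}$ and $W_{\hb,2}$ to be the two listed fillings up to symplectic deformation and symplectomorphism.
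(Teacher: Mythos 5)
Your proposal follows essentially the same route as the paper's proof: identify $Y_C$ as a small Seifert fibered L-space, invoke Matkovi\v{c}'s classification, compute $\Gamma(\xi_C,s_0)=(3p-2p^2)\mu$ via Theorem~\ref{thm:Gamma(xi,s0)} with $g=(p-1)(2p-1)$, and match it against the $\Gamma$-invariant of each Stein handlebody computed through Kirby calculus tracking $\mu$ and the characteristic sublink. The only cosmetic difference is that in the paper the parity split is unnecessary for the first filling (since $\lk(K_0,K_1)=2p$ is always even, the $+1$-framed component always joins the characteristic sublink) and arises only for the second; your anticipation of "possible parity cases" is consistent with this.
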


\begin{figure}[h]
	\centering
	\includegraphics[scale=.75]{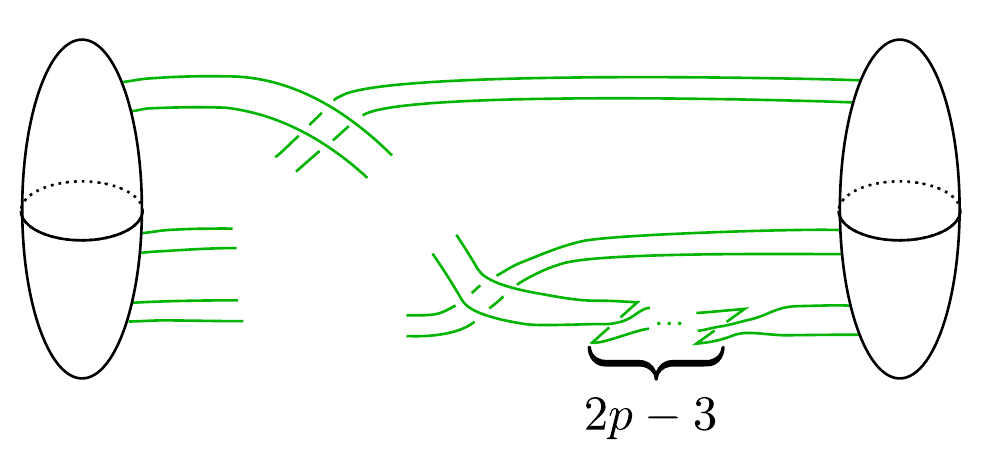}
	\caption{Stein handlebody diagram for the rational homology ball filling of $(Y_C,\xi_C)$ where $C$ is a $\Bc_p$-type rational unicuspidal curve. The attaching curve for the $2$-handle passes through the $1$-handle $2p$ times. Observe that for the Legendrian attaching curve for the $2$-handle, $\tb = 4(p-1)-(2p-3)-(2p-3)=2$ and $\rot = 2p-3$.}
	\label{fig:BpStein}
\end{figure}

\begin{figure}[h]
	\centering
	\includegraphics[scale=.75]{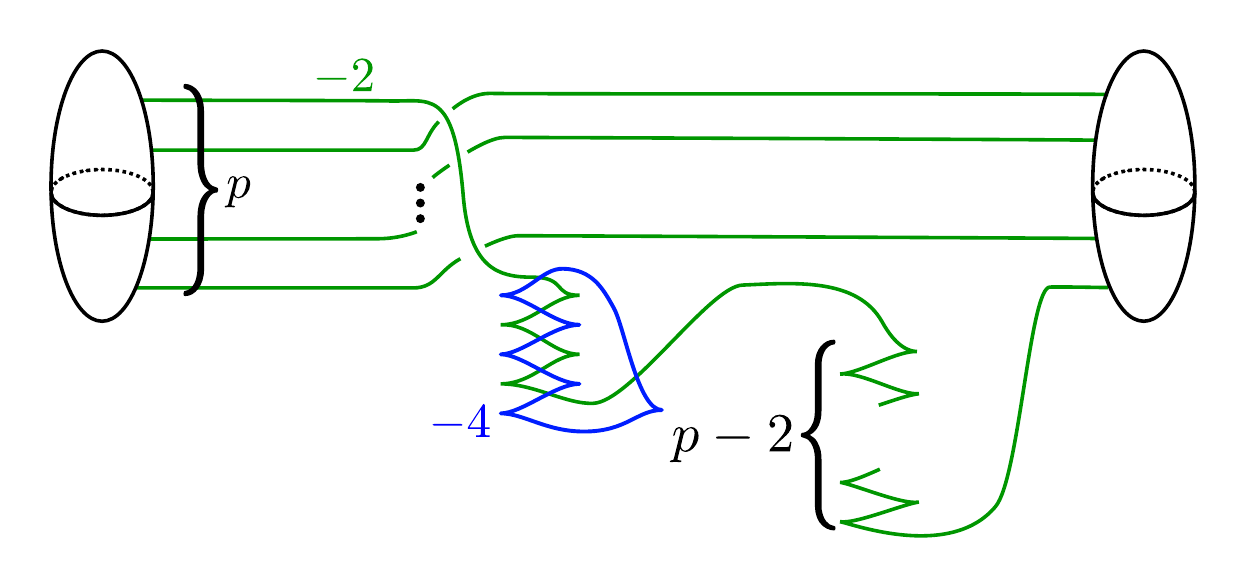}
	\caption{Stein handlebody diagram for the $b_2=1$ filling of $(Y_C,\xi_C)$ where $C$ is a $\Bc_p$-type rational unicuspidal curve. Observe that the blue Legendrian attaching curve has $\tb = -1$ and $\rot = p$, and the green Legendrian attaching curve has $\tb=-3$ and $\rot=2$.}
	\label{fig:BpStein2}
\end{figure}

The proof will follow the same method as Theorem~\ref{thm:ApStein}. A heuristic argument, similar to that we gave for the $\Ac_p$ family, for why the filling has a decomposition with one 1-handle and one 2-handle is found by considering $C \cup T$, where $T$ is the tangent to $C$ at the cusp.

\begin{proof}
	The boundary of the handlebody is $Y_C$ as seen by the sequence of Kirby calculus moves in Figure~\ref{fig:BpKirby}, which are explained in the caption.
	The isotopy from the bottom left diagram to the bottom right one can be thought of in terms of the following observation. As a link, the diagram on the bottom left is the obtained from the Hopf link by taking the $(2,1)$-cable of one component and the $(p,1)$-cable of the other. Since the link obtained by doing the $(p,1)$-cable on the Hopf link is symmetric, we can choose to ``discharge'' the cabling on the first component, so that cabling the two components separately is the same as cabling one component twice. One needs to keep track of the framing, using the writhe, to see that there are $(p-1)$ negative twists.
	
	We also track the class $\mu$ and the spin structure $s_0$ (represented by the non-empty characteristic sublink) through these diagrams. Since the (green) $+1$-unknot of the blow-up links the (black) knotted component $2p$ times, by~\cite[Pages~190--191]{GompfStipsicz}, the $+1$-framed unknot is included in the characteristic sublink corresponding to $s_0$. That is, in the last diagram the spin structure $s_0$ is represented by the characteristic sublink consisting of \emph{both} components of the surgery diagram.
	
	\begin{figure}
		\centering
		\includegraphics[scale=.58]{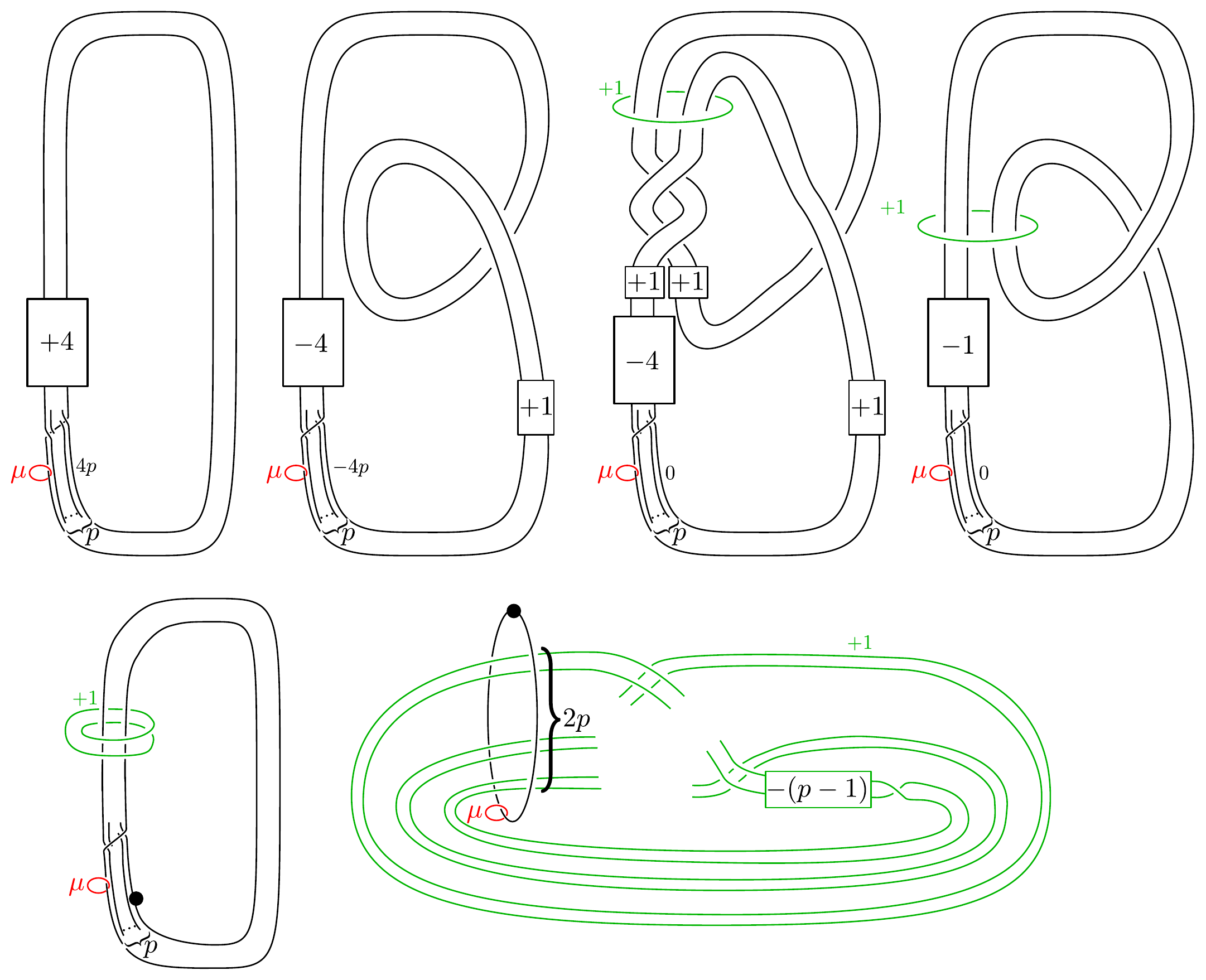}
		\caption{A Kirby calculus sequence from the surgery presentation of $Y$ to a rational homology ball. Along the top, going from the left most figure to the next corresponds to reversing orientation and performing an isotopy, then the next figure is related by an anti-blow-up, and the rightmost is related by an isotopy. The bottom left figure is related by an isotopy and switching the $0$-framed component to a dotted circle, and the bottom right figure is obtained by an isotopy to put it in standard notation. This bottom right figure is the same underlying smooth handlebody diagram as Figure~\ref{fig:BpStein} (the signed number of crossings between the two strands at the bottom is $-2(p-1)+1=-(2p-3)$, and the framing $+1=\tb-1$ for the Legendrian realization in Figure~\ref{fig:BpStein}).}
		\label{fig:BpKirby}
	\end{figure}

	Since the $\Bc_p$ curve has arithmetic genus $\frac{(p-1)(4p-2)}{2}$, using Theorem~\ref{thm:Gamma(xi,s0)}, we see that
	\[
	\Gamma(\xi_C,s_0) = (1-(p-1)(2p-1))\mu = (3p-2p^2)\mu.
	\]
	
	Let $\xi_\hb$ be the contact structure represented by the Legendrian surgery diagram of Figure~\ref{fig:BpStein}.
	We compare $\Gamma(\xi_C,s_0)$ and $\Gamma(\xi_{\hb},s_0)$, following the same computation as in the $\Ac_p$ case.
	
	As in the proof of Theorem~\ref{thm:ApStein}, call $W^*_\hb$ the surgery diagram on the top right of Figure~\ref{fig:BpKirby} (i.e. the one obtained from doing a zero-dot surgery on $W_\hb$), and call $K_0$ and $K_1$ the components of the link, where $K_0$ is the 0-framed component (black) and $K_1$ is the 1-framed component. With a slight abuse of notation, we also call $K_1$ the Legendrian attaching curve of Figure~\ref{fig:ApStein}, which we orient as going from left to right. We orient $K_0$ so that $\lk(K_0,K_1) = 2p$. Once again, we use the conventions from~\cite{Gompf2} to compute Thurston--Bennequin and rotation numbers in the presence of 1-handles.
	
	Using these conventions, we compute the classical invariants of $K_1$. There are $4(p-1)$ positive crossings coming from the $1/p^{\rm th}$ of twist on the left, $2p-3$ negative crossings coming from the 2-cabling on the right, and $4p-6$ cusps, all of which are upward-pointing. Summing up:
	
	\begin{align*}
	\tb(K_1) &= 4(p-1)-(2p-3)-(2p-3) = 2;\\
	\rot(K_1) &= {\textstyle -\frac{4p-6}2} = 2p-3.
	\end{align*}
	Note that $\tb(K_1) - 1 = +1$, so that the framing coming from the Legendrian surgery picture of Figure~\ref{fig:BpStein} agrees with the smooth surgery framing of Figure~\ref{fig:BpKirby}.
	
	Call $D_0$ and $D_1$ the co-cores of the handles associated to the surgeries along $K_0$ and $K_1$, respectively. The intersection matrix of $W^*_\hb$ is $\left(\begin{smallmatrix}0 & 2p\\ 2p & 1\end{smallmatrix}\right)$, so we have $[\partial D_0] = \mu \in H_1(Y)$ and $[\partial D_1] = -2p\mu \in H_1(Y)$.
	
	Recall that $s_0$ corresponds to the characteristic sublink $L' := K_0\cup K_1$ containing both components of the link in Figure~\ref{fig:BpKirby}. The same computation as in the proof of Theorem~\ref{thm:ApStein} now yields:
	
	\begin{align*}
	\Gamma(\xi_\hb,s_0) & =\PD(j^*\rho) = \partial \PD(\rho) = \frac12 \lk(K_0, L' + K_0)[\partial D_0] + \frac12\left( \rot(K_1) + \lk(K_1, L' + K_0) \right)[\partial D_1]\\
	&=\frac12 \lk(K_0,K_1)\mu +\frac12 (\rot(K_1)+\lk(K_1,K_1)) (-2p)\mu =p\mu-p(2p-3+1)\mu\\
	&=(3p-2p^2)\mu.
	\end{align*}
	
	Therefore, we have that $\Gamma(\xi_\hb, s_0) =\Gamma(\xi,s_0)$. That is, $\xi_\hb$ and $\xi$ induce the same \spinc structure on $Y$.
	Now, $-Y$ is obtained as $4p^2$-surgery along $T(p,4p-1)$, so $Y$ is a small Seifert fibered space. Moreover, since $4p^2 > 2g(T(p,4p-1)) - 2$, by~\cite[Proposition~4.1]{LiscaStipsicz-I} $Y$ is a also a Heegaard Floer L-space.
	Since both $\xi$ and $\xi_\hb$ are tight, Matkovi\v{c}'s result~\cite[Theorem~1.3]{Matkovic} implies that $\xi_\hb$ and $\xi$ are isotopic.
	
	For the filling with $b_2=1$, the proof goes similarly, comparing the $\Gamma$-invariant calculation for the Stein handlebody of Figure~\ref{fig:BpStein2} with that of Theorem~\ref{thm:Gamma(xi,s0)}. We leave this computation to the reader, but provide assistance with Figure~\ref{fig:BpKirby2}, which gives a Kirby calculus sequence between the surgery diagram for $Y_C$ as $-S^3_{4p^2}(T(p,4p-1))$ and one which is equivalent to Figure~\ref{fig:BpStein2} by and isotopy plus surgering a $0$-framed $2$-handle to a $1$-handle.
	\end{proof}
	
	\begin{figure}
		\centering
		\includegraphics[scale=1]{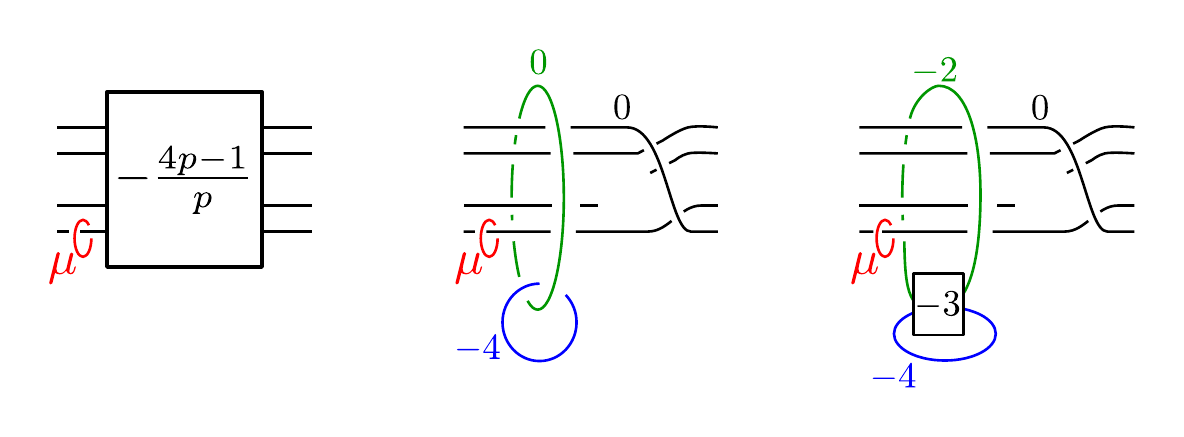}
		\caption{An abbreviated Kirby calculus sequence for $Y_C$ in the $\Bc_p$ case to help compare the $\Gamma$-invariant for the second filling. The right most diagram can be isotoped so that after exchanging the $0$-framed $2$-handle for a dotted circle and switching to the other $1$-handle notation, we get a diagram which is smoothly equivalent to Figure~\ref{fig:BpStein2}. To get from the center figure to the left, handle slide all $p$ strands the black curve over the blue curve, and then cancel the blue and green curves. To get from the center figure to the right, handle slide the green curve once over the blue curve. The spin structure represented by the non-empty characteristict sublink on the left corresponds to the spin structure represented by the characteristic sublink in the center and right diagrams which consists of the black component only when $p$ is even, and the black and blue components when $p$ is odd.}
		\label{fig:BpKirby2}
	\end{figure}

\section{Canonical contact structures in the exceptional $\Ec$ and $\Dc$ cases}\label{s:E3E6}
In this section, we focus on $(Y_C,\xi_C)$ for the two exceptional cases of rational cuspidal curves in $\cptwo$ with a single $T(a,b)$ cusp, with $(a,b) = (3,22)$ or $(a,b) = (6,43)$.
Our goal is to identify the contact manifolds in these two cases as the canonical contact manifolds arising as links of complex normal surface singularities. This will complete the proof of Theorem~\ref{thm:unicuspidal}.

In the proofs, we will make use of the Ozsv\'ath--Szab\'o contact invariant~\cite{OSz-contact} in Heegaard Floer homology~\cite{OSz-HF}.
The relevant properties of the theory are the non-vanishing of the contact invariant for fillable contact structures~\cite[Theorem~1.5]{OSz-contact}, the fact that large surgeries on torus knots are L-spaces (i.e. they have the smallest possible Heegaard Floer homology)~\cite[Proposition~4.1]{LiscaStipsicz-I}, and the absolute grading on Heegaard Floer homology~\cite{OSz-correctionterms}.

We first consider the contact 3-manifold $(Y_C,\xi_C)$ associated to the curve $\Ec$, which is associated to the triple $(3,22;8)$.
Then topologically, $Y_C = -S^3_{64}(T(3,22))$.
In particular, it is a small Seifert fibered manifolds with Seifert parameters $(-2;1/2,1/3,15/22)$;
see Figure~\ref{f:Seifert} for an explanation of the notation.

\begin{figure}
	\labellist
	\pinlabel $e_0$ at 136 46
	\pinlabel $-\frac{1}{r_1}$ at 14 7
	\pinlabel $-\frac{1}{r_2}$ at 50 7
	\pinlabel $-\frac{1}{r_3}$ at 86 7
	\endlabellist
	\centering
	\includegraphics[scale=1.2]{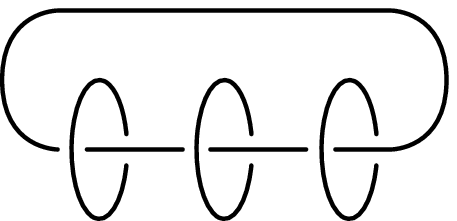}
	\caption{A surgery description for the small Seifert fibered manifold with parameters $(e_0;r_1,r_2,r_3)$.}
	\label{f:Seifert}
\end{figure}

Expanding the rational surgeries with their continued fraction expansions, we see that $Y_C$ is smoothly the boundary of the following plumbing graph.
\begin{equation}\label{e:64/7}
	\begin{aligned}
		\xygraph{
			!{<0cm,0cm>;<0.9cm,0cm>:<0cm,0.9cm>::}
			!~-{@{-}@[|(2.5)]}
			!{(0,0.8) }*+{\bullet}="c"
			!{(1.5,0.8) }*+{\bullet}="d"
			!{(3,0.8) }*+{\bullet}="e"
			!{(4.5,0.8) }*+{\bullet}="f"
			!{(6,0.8) }*+{\bullet}="x"
			!{(1.5,-0.7) }*+{\bullet}="h"
			!{(0,1.2) }*+{-2}
			!{(1.5,-1.1) }*+{-3}
			!{(1.5,1.2) }*+{-2}
			!{(3,1.2) }*+{-2}
			!{(4.5,1.2) }*+{-2}
			!{(6,1.2) }*+{-8}
			"c"-"d"
			"d"-"e"
			"e"-"f"
			"f"-"x"
			"d"-"h"
		}
	\end{aligned}
\end{equation}

The \emph{canonical} contact structure of a Seifert fibered space is the one arising as convex boundary of a plumbing of symplectic spheres according to a negative definite graph. Equivalently, it is the contact structure that arises as the link of a complex surface singularity whose minimal normal crossing resolution is that plumbing. Note that the graph~\eqref{e:64/7} does have a negative definite intersection form.

\begin{prop}\label{p:canonical322}
	When $C$ is the rational cuspidal curve with a single $T(3,22)$ cusp and self-intersection number $64$, the contact manifold $(Y_C,\xi_C)$ is contactomorphic to the canonical contact structure on $Y_C$ associated with the plumbing~\eqref{e:64/7}. It can be presented as a Legendrian surgery diagram as in Figure~\ref{fig:E3Stein}.
\end{prop}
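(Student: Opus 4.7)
The plan is to identify $(Y_C, \xi_C)$ with the canonical contact structure $\xi_{\rm can}$ on $Y_C$ by combining Ghiggini's classification of fillable tight contact structures on small Seifert fibered L-spaces with a Heegaard Floer correction-term computation. The strategy parallels that of Section~\ref{s:AB}, with the Ghiggini--Tosun classification playing the role previously filled by Matkovi\v{c}'s result and the $d$-invariants playing the role of Gompf's $\Gamma$-invariant. As a preliminary, observe that $-Y_C \cong S^3_{64}(T(3,22))$ with $64 \geq 2g(T(3,22)) - 1 = 41$, so~\cite[Proposition~4.1]{LiscaStipsicz-I} implies $Y_C$ is a Heegaard Floer L-space. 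The Seifert invariants $(-2;1/2,1/3,15/22)$ expand into the negative definite plumbing~\eqref{e:64/7}, whose convex neighbourhood of symplectic spheres carries $\xi_{\rm can}$; the corresponding Legendrian surgery diagram in Figure~\ref{fig:E3Stein} is obtained by replacing each vertex of framing $e_i$ with a suitably stabilized Legendrian unknot of $\tb = e_i + 1$ and rotation number dictated by the canonical class of~\eqref{e:64/7}.

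Both $\xi_C$ and $\xi_{\rm can}$ are tight with nonzero Ozsv\'ath--Szab\'o contact invariants: for $\xi_{\rm can}$ the plumbing provides a Stein filling, and for $\xi_C$ the embedding of $C$ into $\cptwo$ yields a symplectic filling of $(Y_C,\xi_C)$ as the complement of a standard concave neighbourhood of $C$, so non-vanishing of $c(\xi_C)$ follows from~\cite[Theorem~1.5]{OSz-contact}. The core step is then to show that $\xi_C$ and $\xi_{\rm can}$ induce the same spin$^c$ structure on $Y_C$. On the $\xi_C$ side this spin$^c$ structure is pinned down by Theorem~\ref{thm:Gamma(xi,s0)} with arithmetic genus $g = 21$, while on the $\xi_{\rm can}$ side it is determined by the canonical class of~\eqref{e:64/7} via the adjunction relations $\langle c_1(\mathfrak{s}_{\rm can}), [S_i]\rangle = -e_i - 2$. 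I would compare the two by computing the Heegaard Floer correction term $d(-Y_C, \mathfrak{s})$ in both presentations of $Y_C$: on the surgery side via the Ozsv\'ath--Szab\'o formula for $d$-invariants of large surgeries on L-space knots, and on the plumbing side via their algorithm for $d$-invariants of negative definite plumbings with few bad vertices. Matching the resulting numerical values then identifies the two spin$^c$ structures.

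Once the spin$^c$ structures coincide,~\cite[Theorem~1.3]{Ghiggini-Lclassification} (or its extension in~\cite{Tosun}) implies that the two fillable tight contact structures $\xi_C$ and $\xi_{\rm can}$ are contactomorphic, which establishes the proposition and simultaneously identifies Figure~\ref{fig:E3Stein} as a Legendrian surgery diagram for $(Y_C, \xi_C)$. The principal obstacle I anticipate is the bookkeeping required to reconcile the two different surgery presentations of $Y_C$: one must fix compatible affine identifications of $\Spinc(Y_C)$ between the single-component surgery description as $(-64)$-surgery on the mirror of $T(3,22)$ and the six-vertex plumbing~\eqref{e:64/7}, so that the equality of correction terms actually pairs up the correct spin$^c$ structures. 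The plumbing-side correction-term calculation, while algorithmic, also involves a moderately large enumeration in the negative definite lattice of~\eqref{e:64/7} that must be carried out with care.
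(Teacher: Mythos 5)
Your overall frame (fillability $\Rightarrow$ tightness, Tosun/Ghiggini's classification of tight structures on this small Seifert fibered L-space, and correction terms as the discriminating invariant) matches the paper's. But the step you lean on to identify the spin$^c$ structures is where the argument breaks: computing $d(-Y_C,\fs_{\xi_C})$ from the large-surgery formula and $d(-Y_C,\fs_{\rm can})$ from the plumbing algorithm and then ``matching the resulting numerical values'' does not identify the two spin$^c$ structures, because $d$ is not injective on $\Spinc(Y_C)$ --- conjugate spin$^c$ structures always share correction terms, and on a space with $|H_1|=64$ there is no reason for distinct orbits not to collide either. You flag the bookkeeping of reconciling the two surgery presentations as the main obstacle, but the real issue is logical rather than computational: even with perfectly reconciled labelings, equality of two $d$-invariants proves nothing about equality of spin$^c$ structures without a uniqueness statement. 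Relatedly, the $\Gamma$-invariant computation (Theorem~\ref{thm:Gamma(xi,s0)}) is not needed here and does not by itself bridge the two presentations.

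The paper closes this gap by exploiting a piece of information you did not use: $(Y_C,\xi_C)$ bounds a \emph{rational homology ball} (the complement of the degree-$8$ curve in $\cptwo$), so $d(Y_C,\ft_{\xi_C})=0$ with no computation at all. It then runs through \emph{all} tight contact structures in Tosun's list --- parametrized by the rotation numbers $(s,t)$ of the $-3$- and $-8$-framed components, with $s=\pm1$ and $t\in\{0,\pm2,\pm4,\pm6\}$ --- and computes the degree of each contact invariant from its Stein filling via $\tfrac14\bigl(c_1^2(J_{s,t})-2\chi(P^*)-3\sigma(P^*)\bigr)=\tfrac{384-36s^2-4st-9t^2}{256}$, which vanishes only at $(s,t)=\pm(1,6)$, i.e.\ only for the canonical structure and its conjugate. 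This is the uniqueness step your proposal is missing, and it also makes transparent that the conclusion holds only up to conjugation (which, as the paper remarks, is harmless for classifying fillings --- a point your write-up should also address). If you replace your ``match two numbers'' step with ``$d=0$ from the rational homology ball filling, plus $d\neq 0$ for every other tight structure in Tosun's list,'' your argument becomes the paper's.
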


\begin{figure}
	\centering
	\includegraphics[scale=.75]{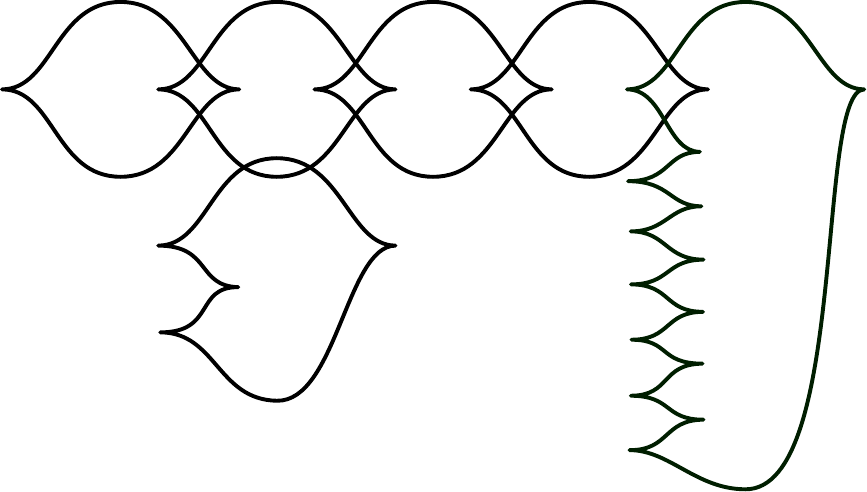}
	\caption{Stein handlebody for the plumbing inducing the canonical contact structure on its boundary.}
	\label{fig:E3Stein}
\end{figure}

\begin{proof}
	Tosun~\cite[Theorem~1.1(b)]{Tosun} classified tight contact structures on small Seifert fibered spaces with $e_0=-2$ and $r_1+r_2+r_3<2$, showing that all such tight contact structures all arise from a Legendrian surgery picture associated to the plumbing~\eqref{e:64/7}, as in Figure~\ref{f:Legendrian-link-plumbing}.
	Since $(Y_C,\xi_C)$ is fillable, $\xi_C$ is tight~\cite{EliashbergGromov}. As explained above $Y_C$ is the small Seifert fibered space with $(e_0;r_1,r_2,r_3)=(-2;1/2,1/3,15/22)$. Since, $r_1+r_2+r_3 = \frac{50}{33} < 2$, Tosun's classification result applies.
	
	\begin{figure}
		\centering
		\includegraphics[scale=.75]{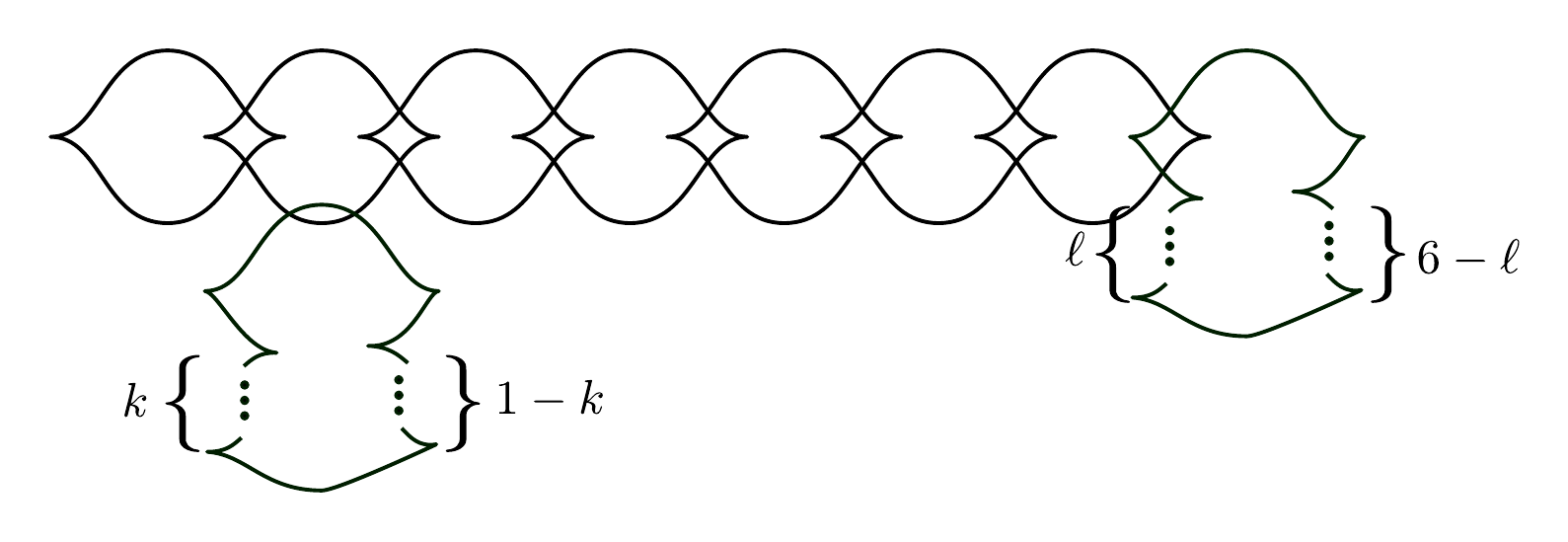}
		\caption{Allowing some number of the stabilizations of unknotted components of Figure~\ref{fig:E3Stein} to be negative provides Legendrian surgery diagrams for all other tight contact structures on this 3-manifold by a result of Tosun.}
		\label{f:Legendrian-link-plumbing}
	\end{figure}
	
	We will prove that the canonical contact structure is the unique tight contact structure (on the Seifert fibered space at hand) whose associated spin$^c$ structure $\ft$ extends to a rational homology ball.
	We do this by computing the Heegaard Floer correction term associated to the \spinc classes of all tight contact structures, using Tosun's classification.
	
	We begin by observing that, since $(Y_C,\xi_C)$ has a rational homology ball filling, then the correction term $d(Y_C,\ft)$ vanishes.
	We set out to prove that this characterises the canonical contact structure (up to conjugation).

	Since torus knots are L-space knots (in fact, lens space knots~\cite{Moser}), and the surgery coefficient is 64, which is larger than $2g(T(3,22))-2 = 40$, $Y$ is an L-space~\cite[Proposition~4.1]{LiscaStipsicz-I}. (Now we could use Ghiggini's classification result from~\cite{Ghiggini-Lclassification} instead of Tosun's, as mentioned in the introduction.)
	
	Since every tight contact structure on $Y_C$ arises as the boundary of a Stein handlebody as in Figure~\ref{f:Legendrian-link-plumbing}, we can use this Stein filling to compute the degree of the contact invariant for each contact structure. The contact invariant is also the generator of the associated tower in $\HFplus(Y)$, so its degree is the correction term associated to the corresponding contact structure.
	
	The possible realizations of the link $L$ are distinguished by the rotation numbers $s, t$ of the components with framing $-3$ and $-8$:
	these can take values $\pm1$ and $\pm6$, $\pm4$, $\pm2$, or $0$, respectively.
	All other rotation numbers vanish, since the framing is $-2$.
	Moreover, reversing both signs corresponds to conjugation.
	Let $J_{s,t}$ be the complex structure on the Stein plumbing $P$ associated to Legendrian realization of the link with rotation numbers $s$ and $t$, and $\xi_{s,t}$ the associated contact structure on $Y_C$.
	
	Since $c_1(J_{s,t})$ evaluates on a sphere in the plumbing as the rotation number of the corresponding unlink, we can compute $c_1^2(J_{s,t})$ from the intersection matrix $Q$ of the plumbing $P$.
	$Q$ and its inverse $Q^{-1}$ are:
	\[
	Q = \left(
	\begin{matrix}
		-2 & 1 & 1 & 1 & 0 & 0\\
		1 & -2 & 0 & 0 & 0 & 0\\
		1 & 0 & -3 & 0 & 0 & 0\\
		1 & 0 & 0 & -2 & 1 & 0\\
		0 & 0 & 0 & 1 & -2 & 1\\
		0 & 0 & 0 & 0 & 1 & -8\\
	\end{matrix}
	\right), \qquad
	Q^{-1} = -\frac{1}{64}\left(
	\begin{matrix}
		132 & 66 & 44 & 90 & 48 & 6\\
		66 & 65 & 22 & 45 & 24 & 3\\
		44 & 22 & 36 & 30 & 16 & 2\\
		90 & 45 & 30 & 105 & 56 & 7\\
		48 & 24 & 16 & 56 & 64 & 8\\
		6 & 3 & 2 & 7 & 8 & 9
	\end{matrix}
	\right).
	\]
	Then $c_1^2(J_{s,t}) = vQ^{-1}v^t$, where $v = (0,0,s,0,0,t)$.
	
	For convenience, call $P^* = P\setminus B^4$, a cobordism from $S^3$ to $Y$.
	Then,
	\[
	\frac{c_1^2(J_{s,t}) - 2\chi(P^*) - 3\sigma(P^*)}4 = \frac{384-36s^2-4st-9t^2}{256},
	\]
	and it is easily verified that the minimum of this function, with $s = \pm1$ and $t\in\{0,\pm2,\pm4,\pm6\}$ is $0$, attained only at $(s,t) = \pm(1,6)$;
	that is, only at the canonical contact structure and its conjugate.
\end{proof}

\begin{remark}
	We note that when understanding symplectic fillings, identifying a contact structure up to contactomorphism and conjugation suffices. This is because, for every symplectic filling of a contact manifold, the same filling with the conjugate almost complex structure (negating the symplectic form) gives a symplectic filling of the conjugate contact manifold. Therefore the fillings of a contact manifold and its conjugate are in bijective correspondence.
\end{remark}

Next, we consider the second exceptional case, where $C$ has a unique singularity of type $(6,43)$ and self-intersection number $16^2=256$.
In this case, $Y_C = -S^3_{256}(T(6,43))$, so $Y_C$ is a small Seifert fibered space with Seifert parameters $(-2;1/2,1/6,36/43)$.
A negative definite plumbing with boundary $Y_C$ is the following.
\begin{equation}\label{e:256/x}
	\begin{aligned}
		\xygraph{
			!{<0cm,0cm>;<0.9cm,0cm>:<0cm,0.9cm>::}
			!~-{@{-}@[|(2.5)]}
			!{(0,0.8) }*+{\bullet}="c"
			!{(1.5,0.8) }*+{\bullet}="d"
			!{(3,0.8) }*+{\bullet}="e"
			!{(4.5,0.8) }*+{\bullet}="f"
			!{(6,0.8) }*+{\bullet}="x"
			!{(7.5,0.8) }*+{\bullet}="y"
			!{(9,0.8) }*+{\bullet}="z"
			!{(10.5,0.8) }*+{\bullet}="a"
			!{(1.5,-0.7) }*+{\bullet}="h"
			!{(0,1.2) }*+{-2}
			!{(1.5,-1.1) }*+{-6}
			!{(1.5,1.2) }*+{-2}
			!{(3,1.2) }*+{-2}
			!{(4.5,1.2) }*+{-2}
			!{(6,1.2) }*+{-2}
			!{(7.5,1.2) }*+{-2}
			!{(9,1.2) }*+{-2}
			!{(10.5,1.2) }*+{-8}
			"c"-"d"
			"d"-"e"
			"e"-"f"
			"f"-"x"
			"d"-"h"
			"x"-"y"
			"y"-"z"
			"z"-"a"
		}
	\end{aligned}
\end{equation}

\begin{prop}\label{p:canonical643}
	When $C$ is the rational cuspidal curve with a single $T(6,43)$ cusp and self-intersection number $256$, the contact manifold $(Y_C,\xi_C)$ is contactomorphic to the canonical contact structure on $Y_C$ associated with the plumbing~\eqref{e:256/x}. It can be presented as a Legendrian surgery diagram as in Figure~\ref{fig:E6Stein}.
\end{prop}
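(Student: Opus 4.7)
The plan is to mirror the strategy of Proposition~\ref{p:canonical322}. First I would verify that Tosun's classification~\cite[Theorem~1.1(b)]{Tosun} applies: the Seifert invariants of $Y_C$ are $(-2;1/2,1/6,36/43)$, so $e_0=-2$ and
\[
r_1+r_2+r_3 = \frac{1}{2}+\frac{1}{6}+\frac{36}{43} = \frac{194}{129} < 2.
\]
Since $(Y_C,\xi_C)$ is fillable and hence tight, it follows that $\xi_C$ arises as the convex boundary of a Stein filling obtained from a Legendrian realization of the plumbing~\eqref{e:256/x}, with framings one less than the Thurston--Bennequin numbers.

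Second, I would observe that $256 > 2g(T(6,43))-2 = 208$, so $Y_C$ is an L-space by~\cite[Proposition~4.1]{LiscaStipsicz-I}. Since $C$ embeds in $\cptwo$, the complement of a neighborhood of $C$ is a rational homology ball filling of $(Y_C,\xi_C)$, forcing $d(Y_C,\ft_{\xi_C})=0$. The aim is then to show that, up to conjugation, the canonical contact structure is the unique tight contact structure on $Y_C$ sharing this property.

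Next I would parametrize the Legendrian realizations: the seven $-2$-vertices admit only the standard rotation $0$, while the $-6$- and $-8$-vertices admit rotations $s\in\{-4,-2,0,2,4\}$ and $t\in\{-6,-4,-2,0,2,4,6\}$ respectively, with $(s,t)\mapsto -(s,t)$ corresponding to conjugation. Writing $J_{s,t}$ for the Stein structure on the plumbing $P$ associated to such a realization, $\xi_{s,t}$ for its contact boundary, and $P^*=P\setminus B^4$, Stein fillability gives $c^+(\xi_{s,t})\neq 0$~\cite[Theorem~1.5]{OSz-contact}, and since $Y_C$ is an L-space, this invariant generates the tower; consequently
\[
D(s,t) := \frac{c_1^2(J_{s,t}) - 2\chi(P^*) - 3\sigma(P^*)}{4} = d(Y_C,\ft_{\xi_{s,t}}) \geq 0.
\]

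The main obstacle is a mechanical but tedious linear-algebra computation: invert the $9\times 9$ intersection matrix $Q$ of~\eqref{e:256/x}, evaluate $c_1^2(J_{s,t}) = vQ^{-1}v^t$ with $v=(0,0,s,0,0,0,0,0,t)$ on each of the $5\times 7 = 35$ admissible pairs, and verify that $D(s,t)$ attains its minimum value $0$ only at $(s,t)=\pm(4,6)$. These extremal rotations are precisely the adjunction values $-K_P\cdot e_i = -e_i^2-2$ on the $-6$- and $-8$-vertices, which characterize the canonical complex structure on the resolution. Hence $\xi_C$ is contactomorphic to the canonical contact structure or its conjugate, and Figure~\ref{fig:E6Stein} provides the corresponding Legendrian surgery presentation.
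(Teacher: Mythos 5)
Your proposal is correct and follows essentially the same route as the paper's proof: apply Tosun's classification, use the rational homology ball filling to force $d(Y_C,\ft_{\xi_C})=0$, and minimize $\frac{c_1^2(J_{s,t})-2\chi(P^*)-3\sigma(P^*)}{4}$ over the admissible rotation numbers of the $-6$- and $-8$-vertices to isolate $(s,t)=\pm(4,6)$. Your stated ranges $s\in\{0,\pm2,\pm4\}$, $t\in\{0,\pm2,\pm4,\pm6\}$ are the correct ones (consistent with the extremum at $\pm(4,6)$ and with the adjunction values $-e_i^2-2$), whereas the paper's text lists slightly smaller ranges that appear to be a typo.
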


\begin{figure}
	\centering
	\includegraphics[scale=.75]{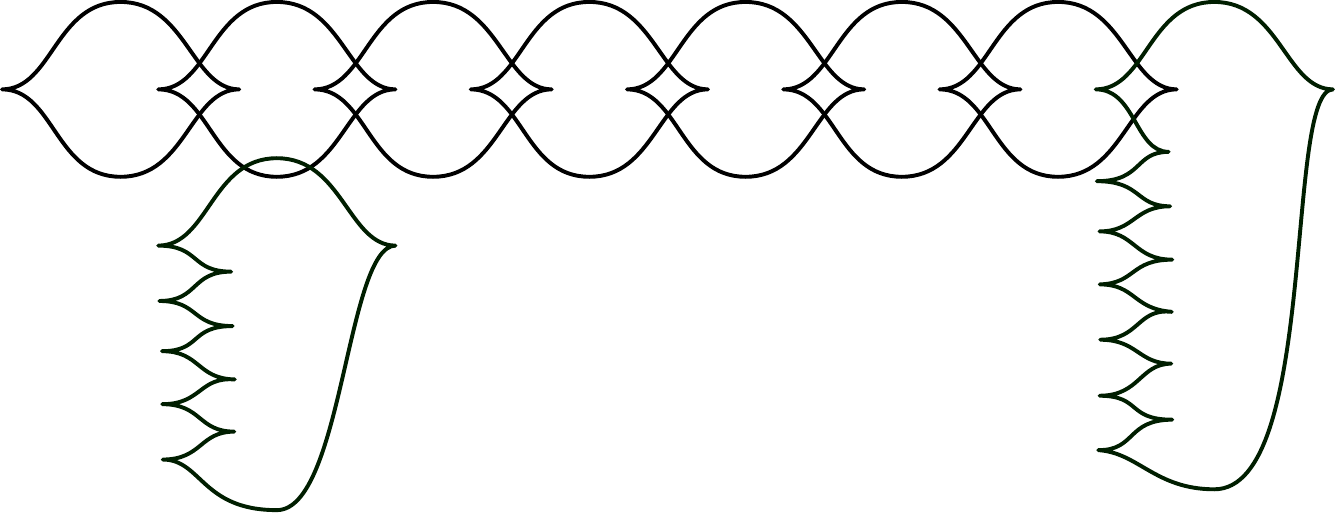}
	\caption{Stein handlebody for the plumbing inducing the canonical contact structure on its boundary.}
	\label{fig:E6Stein}
\end{figure}

\begin{proof}
	Since the proof is very similar to that of Proposition~\ref{p:canonical322}, we omit the details that would be repeated and only include the calculations where the two cases differ.
	
	Again, in this case we have $r_1+r_2+r_3 = \frac{194}{129} < 2$, so Tosun's classification result still applies. Moreover, the surgery coefficient is again sufficiently large, so $Y_C$ is again an L-space.
	
	In the plumbing graph there are still only two rotation numbers $s$ and $t$ that vary, corresponding to the vertices of weight $-6$ and $-8$, respectively. $s$ takes values in $\{\pm2, 0\}$, while $t$ takes values in $\{\pm4, \pm2, 0\}$.
	
	The intersection matrix $Q$ associated to the plumbing is:
	\[
	Q = \left(
	\begin{matrix}
		-2 & 1 & 1 & 1 & 0 & 0 & 0 & 0 & 0\\
		1 & -2 & 0 & 0 & 0 & 0 & 0 & 0 & 0\\
		1 & 0 & -6 & 0 & 0 & 0 & 0 & 0 & 0\\
		1 & 0 & 0 & -2 & 1 & 0 & 0 & 0 & 0\\
		0 & 0 & 0 & 1 & -2 & 1 & 0 & 0 & 0\\
		0 & 0 & 0 & 0 & 1 & -2 & 1 & 0 & 0\\
		0 & 0 & 0 & 0 & 0 & 1 & -2 & 1 & 0\\
		0 & 0 & 0 & 0 & 0 & 0 & 1 & -2 & 1\\
		0 & 0 & 0 & 0 & 0 & 0 & 0 & 1 & -8\\
	\end{matrix}
	\right),
	\]
	and its inverse is
	\[
	Q^{-1} = -\frac1{256}\left(
	\begin{matrix}
		516 & 258 & 86 & 432 & 348 & 264 & 180 & 96 & 12\\
		258 & 257 & 43 & 216 & 174 & 132 &  90 & 48  & 6\\
		86 &  43 &  57 &  72 &  58 &  44 &  30 &  16 &   2\\
		432 & 216 &  72 & 576 &  464 & 352 & 240 & 128 &  16\\
		348 & 174 &  58 & 464 & 580 & 440 & 300 & 160 &  20\\
		264 & 132 &  44 & 352 & 440 & 528 & 360 & 192 &  24\\
		180 &  90 &  30 & 240 & 300 & 360 & 420 & 224 &  28\\
		96 &  48 &  16 & 128 & 160 & 192 & 224 & 256 &  32\\
		12 &   6 &   2 &  16&   20 &  24 &  28 &  32 &  36
	\end{matrix}
	\right).
	\]
	Therefore, in the same notation as in the proof of Proposition~\ref{p:canonical322},
	\[
	\frac{c_1^2(J_{s,t}) - 2\chi(P^*) - 3\sigma(P^*)}4 = \frac{2304-57s^2-4st-36t^2}{256},
	\]
	and the minimum of this function as $s$ and $t$ vary in the corresponding ranges is $0$, attained only at $(s,t) = \pm(4,6)$;
	that is, only at the canonical contact structure and its conjugate.
\end{proof}


\section{Background on symplectic configuration of curves} \label{s:background}
We will briefly collect some background related to rational cuspidal curves and embedding classifications that we will use repeatedly in the remaining sections of the paper. For further details, see~\cite[Sections~2 and~3]{bigGS} (see also~\cite{Li}).

By~\cite{Wall}, every curve singularity can be resolved by blowing up (sufficiently many times), and the diffeomorphism type of the link determines the topology of the resolution.
There are two natural stopping points when resolving a singularity:
the \emph{minimal resolution} is the smallest resolution such that the proper transform $\widetilde C$ of $C$ is smooth;
the \emph{normal crossing resolution} is the smallest resolution such that the total transform $\overline C$ of $C$ is a normal crossing divisor, i.e. all singularities are double points.

The \emph{multiplicity} $m_p$ of a singular point $p$ of a curve $C$ is the minimal intersection of a germ of a divisor $D$ at $p$ with $C$. In terms of the resolution, $m_p$ is the algebraic intersection number of the exceptional divisor and the proper transform after blowing up at $p$, and we have $[\widetilde C] = [C]-m_p[E]$.
The \emph{multiplicity sequence} of a singularity $p$ is defined as the sequence of multiplicities of the curve at $p$ and each of its proper transforms in the sequence of blow-ups leading to the minimal resolution of the singularity.

Recall that an isolated singularity of a curve at $q$ can be smoothed to its Milnor fiber.
Let $\mu(q)$ denote the first Betti number of the Milnor fiber. If $q$ has $r$ local branches, define $\delta(q)$ by $2\delta(q) = \mu(q) + r - 1$. 
If $q$ is a cusp, then $\delta(q)$ is the genus of its Milnor fiber.
It follows from the adjunction formula that for a singularity with multiplicity sequence $[m_1,\dots, m_n]$
\begin{equation}\label{e:deltamulti}
	\delta(p) = \frac12\sum m_j(m_j-1).
\end{equation}
There are two different types of notions of the genus of a singular curve $C$: the \emph{geometric genus} and the \emph{arithmetic genus}.
The geometric genus $p_g(C)$ is the genus of the proper transform of the curve in the minimal resolution; by definition, rational curves have geometric genus zero. The arithmetic genus $p_a(C)$ is given by
	\begin{equation}\label{e:degree-genus}
		p_a(C) := p_g(C) + \sum_{p \in \Sing(C)} \delta(p)
	\end{equation}

Next, we will state various results about embedded surfaces in symplectic $4$-manifolds that we used heavily in our embedding classification results in~\cite{bigGS}, beginning with the following important result of McDuff.

\begin{theorem}[\cite{Mc}] \label{thm:mcduff}
	If $(X,\omega)$ is a closed symplectic $4$-manifold and $C_0\subset X$ is a smooth symplectic sphere of self-intersection number $+1$, then there is a symplectomorphism of $(X,\omega)$ to a symplectic blow up of $(\cptwo,\lambda\omega_{\rm{FS}})$ for some $\lambda > 0$, such that $C_0$ is identified with $\cpone$.
\end{theorem}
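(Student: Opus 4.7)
My approach would follow McDuff's original pseudoholomorphic curves strategy. First, I would choose an $\omega$-tame almost complex structure $J$ on $X$ for which $C_0$ is $J$-holomorphic, and consider the moduli space $\mathcal{M}_J$ of simple $J$-holomorphic spheres in the class $[C_0]$. By adjunction, $c_1([C_0]) = [C_0]^2 + 2 = 3$, so the unparametrized moduli space has expected real dimension $4 = \dim_{\mathbb{R}} X$; because $C_0$ is embedded with $c_1([C_0]) \ge 1$, automatic regularity (Hofer--Lizan--Sikorav) would ensure $\mathcal{M}_J$ is a smooth 4-manifold near $C_0$. A dimension count together with positivity of intersections (two distinct simple $J$-holomorphic spheres in this class would intersect transversely in exactly $[C_0]^2 = 1$ point) would then show that through any two generic points of $X$ there passes a unique element of $\mathcal{M}_J$.

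Next, I would blow up $X$ at a generic point $p$ to obtain $(\tilde X, \tilde \omega)$ with exceptional curve $E$, and replace each sphere of $\mathcal{M}_J$ through $p$ by its proper transform. These transforms have self-intersection $[C_0]^2 - 1 = 0$ and are pairwise disjoint (their preimages in $X$ met only at $p$), while $E$ is a section meeting each transform exactly once. Using Gromov compactness, any limit of such pseudoholomorphic curves is a stable map whose components represent classes summing to $[C_0]-[E]$; the constraints from $\omega$-positivity and the genus $0$ adjunction inequality would force each bubble configuration to consist of a smooth sphere in the class $[C_0]-[E]-\sum [E_i]$ together with symplectic $(-1)$-exceptional spheres $E_i$. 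I would then iteratively blow these $(-1)$-components down using the symplectic blow-down theorem to obtain a minimal symplectic manifold $X'$ foliated by embedded pseudoholomorphic spheres of square $0$, still containing the image of $E$ as a $(-1)$-section.

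Finally, $X'$ would then be an $S^2$-bundle over $S^2$ admitting a section of self-intersection $-1$, hence symplectomorphic to the first Hirzebruch surface $\cptwo \# \cptwobar$. Blowing that $(-1)$-section back down would produce a symplectic manifold symplectomorphic to $(\cptwo, \lambda\omega_{\rm FS})$ for some $\lambda > 0$, under which the fibers of $X' \to S^2$ become the lines $\cpone \subset \cptwo$. Reversing the sequence of blow-downs would then exhibit $X$ as a symplectic blow-up of $(\cptwo, \lambda\omega_{\rm FS})$ with $C_0$ identified with $\cpone$, as claimed. The main obstacle will be the bubbling analysis in the second step: one must carefully verify, using $\omega$-positivity, the homological constraint $\sum A_i^2 = 0$ with $\sum c_1(A_i) = 2$ on bubble classes after the initial blow-up, and adjunction, that only the anticipated $(-1)$-bubbles can appear, and that the resulting blow-down sequence keeps the distinguished transform $\tilde C_0$ embedded throughout.
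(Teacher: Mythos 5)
This statement is quoted from McDuff's work (the citation \cite{Mc}); the paper offers no proof of it, so there is nothing internal to compare your argument against. Your sketch faithfully reproduces McDuff's original pseudoholomorphic-curve strategy: automatic regularity for the class $[C_0]$ with $c_1([C_0])=3$, the two-point count via positivity of intersections, blowing up to produce a ruling by square-zero spheres with $E$ as a $(-1)$-section, controlling degenerate fibers by the constraint $\sum c_1(A_i)=2$ with each $c_1(A_i)\ge 1$, and blowing down to the first Hirzebruch surface and then to $\cptwo$. The points you flag as the main obstacles are indeed where the real work lies --- in particular, the dimension count alone gives existence of curves only through nearby pairs of points, and upgrading this to all generic pairs requires the properness/compactness argument (equivalently, nonvanishing of the relevant Gromov invariant), while the final identification also uses the uniqueness up to deformation of symplectic structures on rational ruled surfaces; all of this is carried out in \cite{Mc}.
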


This theorem motivates our focus on surfaces embedded in $\cptwo\#N\cptwobar$. We will use the standard basis $h,e_1,\dots, e_N$ for $H_2(\cptwo \# N \cptwobar)$ with $h^2=1$ and $e_i^2=-1$. The following is a useful lemma to find embedded exceptional spheres which intersect a given collection of symplectic surfaces positively.

\begin{lemma}[\cite{Mc}, {\cite[Lemma~3.5]{bigGS}}] \label{l:blowdown}
Suppose $\mathcal{C}$ is a configuration of positively intersecting symplectic surfaces in $\cptwo \# N\cptwobar$. Let $e_{i_1},\dots,e_{i_\ell}$ be exceptional classes which have non-negative algebraic intersections with each of the symplectic surfaces in the configuration $\mathcal{C}$. Then there exist disjoint exceptional spheres $E_{i_1},\dots, E_{i_\ell}$ representing the classes $e_{i_1},\dots, e_{i_\ell}$ respectively such that any geometric intersections of $E$ with $\mathcal{C}$ are positive.
\end{lemma}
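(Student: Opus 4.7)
The plan is to follow the standard pseudoholomorphic curve strategy. First I would choose an $\omega$-tame almost complex structure $J$ on $X = \cptwo \# N \cptwobar$ that makes every component of the configuration $\mathcal{C}$ a $J$-holomorphic curve. Such a $J$ exists because the surfaces in $\mathcal{C}$ are symplectic and meet one another transversely and positively: one builds $J$ near each surface adapted to its symplectic normal bundle, patches along the intersection points (where the two local models agree up to contractible choice), and extends to the whole of $X$ using the contractibility of the space of tame almost complex structures.

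For each exceptional class $e_{i_j}$, I would then produce a $J$-holomorphic representative via a Gromov limit argument. By McDuff's theorem (or equivalently Taubes' SW=Gr identification, which gives $\mathrm{Gr}(e_{i_j}) = 1$), for any \emph{generic} $\omega$-tame almost complex structure $J'$ the class $e_{i_j}$ is represented by an embedded $J'$-holomorphic sphere. Picking a sequence $J_n \to J$ of generic tame almost complex structures and embedded $J_n$-holomorphic spheres $E_{i_j}^{(n)}$ in class $e_{i_j}$, Gromov compactness yields a subsequential limit: a $J$-holomorphic stable map in class $e_{i_j}$, whose image I denote $E_{i_j}$. The main work is to show that, after discarding bubbles, one obtains an honestly embedded $J$-holomorphic sphere in the \emph{precise} class $e_{i_j}$ (not merely in some partial class, with the missing homology absorbed into components of $\mathcal{C}$). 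The hypothesis $e_{i_j} \cdot [C] \ge 0$ for every component $C$ of $\mathcal{C}$ is the crucial input: by positivity of intersections it rules out degenerations in which a multiple of some component $C \in \mathcal{C}$ bubbles off, since such a bubble would force the residual embedded component to have strictly negative intersection with $C$.

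With $E_{i_1}, \dots, E_{i_\ell}$ in hand, the remaining claims are immediate from the local positivity of intersections of distinct irreducible $J$-holomorphic curves. For $j \ne k$ one has $e_{i_j} \cdot e_{i_k} = 0$, so any geometric intersection of $E_{i_j}$ and $E_{i_k}$ would contribute strictly positively to an algebraic count of zero, forcing the two spheres to be disjoint. Likewise, for each component $C \in \mathcal{C}$, every point of $E_{i_j} \cap C$ is an intersection of two distinct $J$-holomorphic curves and therefore contributes strictly positively to the algebraic total $e_{i_j} \cdot [C] \ge 0$, giving the claimed geometric positivity. I expect the bubble-control step of the middle paragraph to be the main obstacle, as it is where the non-genericity of our chosen $J$ must be overcome by leveraging both the non-negativity hypothesis and the adjunction-theoretic constraints on possible Gromov degenerations of exceptional classes.
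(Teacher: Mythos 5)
Your overall strategy---choose $J$ making $\mathcal{C}$ holomorphic, produce a $J$-holomorphic representative of each exceptional class by a Gromov/Taubes limiting argument, and finish with positivity of intersections---is the same one used in the cited proof of \cite[Lemma~3.5]{bigGS}, and your endgame is fine: distinct classes give distinct irreducible curves, $e_{i_j}\cdot e_{i_k}=0$ forces disjointness, and $e_{i_j}\cdot[C]\ge 0$ gives positive geometric intersections (note also that no component of $\mathcal{C}$ can itself represent $e_{i_j}$, since that would violate $e_{i_j}\cdot [C]\ge 0$). The genuine gap is in the bubble-control step, and the specific mechanism you offer for it fails. You claim a bubble $m C$ with $C\in\mathcal{C}$ is excluded because the residual class would meet $[C]$ negatively; but $(e_{i_j}-m[C])\cdot[C]=e_{i_j}\cdot[C]-m[C]^2$, which is \emph{positive} whenever $[C]^2<0$---and nearly every component of the configurations in this paper is a negative sphere (chains of $-2$-spheres, exceptional divisors, the $(-\ell-1)$-spheres, etc.). Your argument therefore only rules out bubbling onto the rare positive-square components. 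The correct exclusion runs through the full decomposition $e_{i_j}=\sum m_k[B_k]$ of the limit, using $e_{i_j}^2=-1$, $K\cdot e_{i_j}=-1$, the adjunction/genus formula, and positivity of intersections among all the pieces, not a single intersection number against one component.

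Separately, you never address bubbles whose image is \emph{not} contained in $\mathcal{C}$. For a $J$ that is merely adapted to the configuration there may be other $J$-holomorphic spheres of very negative square, and multiples of these can absorb homology in the Gromov limit just as readily as components of $\mathcal{C}$; the hypothesis $e_{i_j}\cdot[C]\ge 0$ says nothing about them. The standard repair---and the one implicit in the cited proof---is to take $J$ generic \emph{among} $\mathcal{C}$-adapted almost complex structures (the lemma only asserts existence, so you may choose $J$). Genericity away from $\mathcal{C}$ forces every simple $J$-holomorphic sphere not contained in $\mathcal{C}$ to have non-negative index, hence $-K\cdot B\ge 1$ and $[B]^2\ge -1$ by adjunction; feeding this into the numerical bookkeeping above is what pins the limit down to a single embedded sphere in the class $e_{i_j}$. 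As written, with $J$ only assumed adapted and the residual-intersection argument as your sole control on degenerations, the middle step of your proof does not close.
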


Using the previous lemma, information about an embedding of surfaces in $\cptwo\# N\cptwobar$ can be reduced to symplectic isotopy classes of curve configurations in $\cptwo$ together with the information of the homology classes represented by the surface components. Using the adjunction formula, we have the following restrictions on the homology classes that can represent a symplectic sphere in $\cptwo\# N\cptwobar$.

\begin{lemma}[{\cite[Lemma~3.7]{bigGS}}]\label{l:hom}
	Suppose $\Sigma$ is a smooth symplectic sphere in $\cptwo\#N\cptwobar$ intersecting $\cpone$ non-negatively. Then writing $[\Sigma]=a_0h+a_1e_1+\dots+a_Ne_N$ (so $a_0\geq 0$), we have:
	\begin{enumerate}
		\item \label{adj} $\sum (a_i^2+a_i)=2+a_0^2-3a_0$.
		\item \label{e:exceptional-disjoint} If $a_0=0$, there is one $i_0$ such that $a_{i_0}=1$ and all other $a_i\in \{0,-1\}$.
		\item \label{sgn} If $a_0\neq 0$, then for all $i\geq 1$, $a_i\leq 0$.
	\end{enumerate}
	Some particular cases which we will use often are:
	\begin{enumerate}\setcounter{enumi}{3}
		\item If $a_0=1$ or $a_0=2$, $a_i\in \{0,-1\}$ for all $1\leq i\leq N$.
		\item If $a_0=3$, then there exists a unique $i_0$ such that $a_{i_0}=-2$, and $a_i\in \{0,-1\}$ for all other $i$.
	\end{enumerate}
	The self-intersection number of $\Sigma$ can be used to compute how many $a_i$ have coefficient $0$ versus $-1$.
\end{lemma}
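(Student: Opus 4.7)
The plan combines three tools: the genus-zero adjunction formula, positivity of intersection for distinct symplectic surfaces in a $4$-manifold, and positivity of the symplectic area of a symplectic sphere. For item (1), I will apply adjunction $c_1\cdot[\Sigma] - [\Sigma]^2 = 2$ in $\cptwo\#N\cptwobar$, where $c_1 = 3h - \sum e_i$, and expand $[\Sigma] = a_0h + \sum a_ie_i$ to read off the stated identity directly. For item (3), I will pick any $i\ge 1$ and observe that the exceptional class $e_i$ is represented by a symplectic $(-1)$-sphere $E_i$; since $a_0 > 0$ implies $[\Sigma]\ne[E_i]$, the two irreducible symplectic spheres are distinct as sets, so positivity of intersection gives $[\Sigma]\cdot e_i = -a_i \ge 0$.

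For item (2), setting $a_0 = 0$ in (1) gives $\sum a_i(a_i+1) = 2$. Each summand is a nonnegative integer equal to $0$ precisely when $a_i\in\{0,-1\}$ and equal to $2$ precisely when $a_i\in\{1,-2\}$, so there is a unique $i_0$ with $a_{i_0}\in\{1,-2\}$ and the remaining $a_j\in\{0,-1\}$. The remaining work is to rule out $a_{i_0} = -2$: since each $E_i$ is symplectic, $\omega(e_i) > 0$, so if every $a_i$ were nonpositive (as in the $a_{i_0} = -2$ case), then $\omega([\Sigma]) = \sum a_i\omega(e_i) \le 0$, contradicting the positive symplectic area of $\Sigma$. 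Hence $a_{i_0} = 1$.

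Items (4) and (5) will follow by combining (1) and (3): for $a_0\in\{1,2\}$ the right-hand side of (1) is $0$, forcing every $a_i(a_i+1) = 0$ and hence $a_i\in\{0,-1\}$; for $a_0 = 3$ the right-hand side is $2$, and combined with the sign constraint $a_i\le 0$ from (3) this forces exactly one $a_{i_0} = -2$ with the remaining $a_j\in\{0,-1\}$. The only subtle step in the whole argument is the symplectic-area argument excluding $a_{i_0} = -2$ in (2); everything else is a direct application of adjunction or positivity of intersection.
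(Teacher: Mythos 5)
Your argument is the standard one for this lemma; the paper itself does not reprove it here but imports it from \cite[Lemma~3.7]{bigGS}, where the proof runs along exactly these lines (adjunction, intersection with the exceptional classes, and positivity of symplectic area), so in outline you have reproduced the intended proof. Items (1), (4), (5) and the arithmetic in (2) are fine as written. The one step that is not literally correct as you phrase it is the appeal to ``positivity of intersection'' for two \emph{symplectic} spheres in items (2) and (3): positivity of intersections is a statement about $J$-holomorphic curves for a \emph{common} almost complex structure, and two distinct embedded symplectic surfaces in a symplectic $4$-manifold can perfectly well have a negative transverse intersection point (e.g.\ a graph of $w=\lambda\bar z$ over $\C\times\{0\}$ for small $\lambda$). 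The standard repair, which is what you should say, is: choose an $\omega$-tame $J$ making $\Sigma$ $J$-holomorphic; since $e_i$ is an exceptional class of a (blow-up of a) rational surface, it admits a $J$-holomorphic representative for this $J$ (generically an embedded sphere, and in any case a stable curve none of whose components can be $\Sigma$ because $a_0=[\Sigma]\cdot h>0$ while $e_i\cdot h=0$); then positivity of intersections of $J$-holomorphic curves gives $[\Sigma]\cdot e_i=-a_i\ge 0$. With that adjustment item (3) is complete. Your area argument in (2) is correct, and implicitly uses that the identification of Theorem~\ref{thm:mcduff} realizes each $e_i$ by a symplectic sphere, so $\omega(e_i)>0$; it is worth making that dependence explicit.
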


In the following lemmas from~\cite{bigGS}, $C_i$ and $C_j$ are smooth symplectic spheres in $\cptwo\#N\cptwobar$ such that $[C_i]\cdot h=[C_j]\cdot h=0$. These are easy consequences of Lemma~\ref{l:hom} and the hypothesized intersection relations.

\begin{lemma}[{\cite[Lemmas~3.8 and 3.10]{bigGS}}]\label{l:consecutive} \label{l:share2}
	If $[C_i]\cdot[C_j]=1$ (and $[C_i]\cdot h=[C_j]\cdot h=0$), there is exactly one exceptional class $e_i$ which appears with non-zero coefficient in both $[C_i]$ and $[C_j]$. The coefficient of $e_i$ is $+1$ in one of $[C_i],[C_j]$ and $-1$ in the other.
	
	If $[C_i]\cdot [C_j]=0$, then either there is no exceptional class which appears with non-zero coefficients in both, or there are exactly two exceptional classes $e_m$ and $e_n$ appearing with non-zero coefficients in both. One of these classes $e_m$ has coefficient $-1$ in both $[C_i]$ and $[C_j]$ and the other $e_n$ appears with coefficient $+1$ in one of $[C_i]$ or $[C_j]$ and coefficient $-1$ in the other.
\end{lemma}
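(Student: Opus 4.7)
The plan is to combine the structural restriction on symplectic sphere classes from Lemma~\ref{l:hom}(2) with a direct calculation of the homological intersection pairing. Since $[C_i]\cdot h = [C_j]\cdot h = 0$, Lemma~\ref{l:hom}(2) gives decompositions
\begin{equation*}
[C_i] = e_\alpha - \sum_{k\in A}e_k, \qquad [C_j] = e_\beta - \sum_{l\in B}e_l
\end{equation*}
for some distinguished indices $\alpha\notin A$ and $\beta\notin B$ (with $A$ or $B$ possibly empty), where $e_\alpha$ and $e_\beta$ are the unique exceptional classes appearing with coefficient $+1$ in the respective expansions.

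Using $e_k\cdot e_l = -\delta_{kl}$, I would expand the intersection as
\begin{equation*}
[C_i]\cdot[C_j] = -\delta_{\alpha\beta} + \chi_{B}(\alpha) + \chi_A(\beta) - |A\cap B|,
\end{equation*}
where $\chi_S$ denotes the indicator function of $S$, and then split into cases on the hypothesized value of the left-hand side. For $[C_i]\cdot[C_j]=1$ the equation forces $\alpha\neq\beta$, and the non-negative integer solutions of $\chi_B(\alpha)+\chi_A(\beta)-|A\cap B|=1$ give exactly one shared exceptional class appearing with coefficient $+1$ in one class and $-1$ in the other (as well as a degenerate case discussed below). For $[C_i]\cdot[C_j]=0$ the analogous enumeration yields either zero shared classes, or two shared classes in precisely the sign pattern asserted: one appearing with coefficient $-1$ in both and a second appearing with $+1$ in one and $-1$ in the other.

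The main obstacle I anticipate is eliminating the degenerate configurations in which both $\chi_B(\alpha)=1$ and $\chi_A(\beta)=1$ hold simultaneously (which would produce three shared classes when $[C_i]\cdot[C_j]=1$ and four when $[C_i]\cdot[C_j]=0$). For this I would invoke Lemma~\ref{l:blowdown} to realise the exceptional classes indexed by $\{\alpha,\beta\}\cup(A\cap B)$ by embedded symplectic spheres that intersect $C_i\cup C_j$ positively, and draw a contradiction from the forced geometric intersection pattern against the assumed value of $[C_i]\cdot[C_j]$. Everything else is bookkeeping in the case analysis, so this geometric positivity step is the only substantive ingredient beyond the direct homological computation.
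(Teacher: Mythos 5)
Your computational skeleton---writing $[C_i]=e_\alpha-\sum_{k\in A}e_k$ and $[C_j]=e_\beta-\sum_{l\in B}e_l$ via Lemma~\ref{l:hom}(2), expanding the pairing to get $[C_i]\cdot[C_j]=-\delta_{\alpha\beta}+\chi_B(\alpha)+\chi_A(\beta)-|A\cap B|$, and enumerating solutions---is exactly the intended route, and you correctly isolate the one genuine difficulty: the configurations with $\chi_B(\alpha)=\chi_A(\beta)=1$, which force $|A\cap B|=1$ (resp.\ $2$) and would give three (resp.\ four) shared classes. The gap is in how you propose to kill them. Lemma~\ref{l:blowdown} requires the exceptional classes to have \emph{non-negative} algebraic intersection with every surface of the configuration, and this hypothesis fails for precisely the classes you want to use: $e_\alpha\cdot[C_i]=-1$ and $e_\beta\cdot[C_j]=-1$, so neither $e_\alpha$ nor $e_\beta$ can be realized by a sphere meeting $C_i\cup C_j$ positively. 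The classes indexed by $A\cap B$ do satisfy the hypothesis (they pair to $+1$ with both curves), but realizing and blowing them down yields no contradiction from intersection patterns alone: one simply obtains spheres $\bar C_i,\bar C_j$ with $[\bar C_i]\cdot[\bar C_j]$ increased by $|A\cap B|$, e.g.\ $[\bar C_i]=e_\alpha-e_\beta=-[\bar C_j]$, which is homologically consistent. Indeed the degenerate case is not excluded by Lemma~\ref{l:hom} and the intersection numbers at all: $[C_i]=e_1-e_2-e_3$ and $[C_j]=e_2-e_1-e_3$ satisfy adjunction, Lemma~\ref{l:pos}, and $[C_i]\cdot[C_j]=1$, yet share three exceptional classes.

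The missing input is positivity of symplectic areas in McDuff's standard basis: Theorem~\ref{thm:mcduff} identifies the ambient manifold with a symplectic blow-up of $\cptwo$, so $\omega(e_i)>0$ for every $i$, while $\omega([C_i]),\omega([C_j])>0$ since the $C_i$ are symplectic. In the degenerate case the coefficient of $e_\alpha$ in $[C_i]+[C_j]$ is $1-\chi_B(\alpha)=0$, likewise for $e_\beta$; each class of $A\cap B$ contributes $-2$, and every other exceptional class contributes $-1$ or $0$. Hence $[C_i]+[C_j]$ is a non-zero combination of exceptional classes with all coefficients $\le 0$, so $\omega([C_i])+\omega([C_j])\le -2\,\omega(e_m)<0$ for $m\in A\cap B$, a contradiction. (The same computation disposes of the $\alpha=\beta$ case, as you note, since then the pairing is $-1-|A\cap B|<0$.) With this substitution for your Lemma~\ref{l:blowdown} step, the rest of your case analysis is correct and closes the proof.
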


\begin{lemma}[{\cite[Lemma~3.9]{bigGS}}]\label{l:pos}
	If $e_m$ appears with coefficient $+1$ in $[C_i]$ then it does not appear with coefficient $+1$ in $[C_j]$.
\end{lemma}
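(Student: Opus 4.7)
The plan is to run a short positivity-of-intersections argument against the constraints on homology classes from Lemma~\ref{l:hom}.

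First I would apply Lemma~\ref{l:hom}(\ref{e:exceptional-disjoint}) to each of $[C_i]$ and $[C_j]$. Since both have $h$-coefficient zero, there is a unique index $i_0$ with coefficient $+1$ in $[C_i]$, and all other exceptional coefficients of $[C_i]$ lie in $\{0,-1\}$; similarly for $[C_j]$ with unique index $j_0$. Now suppose toward a contradiction that $e_m$ appears with coefficient $+1$ in both classes. By the uniqueness part of Lemma~\ref{l:hom}(\ref{e:exceptional-disjoint}), this forces $i_0 = j_0 = m$, and we may write
\[
[C_i] = e_m - \sum_{k \in S_i} e_k, \qquad [C_j] = e_m - \sum_{k \in S_j} e_k,
\]
for some subsets $S_i, S_j \subset \{1,\dots,N\}\setminus\{m\}$.

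Next I would compute the intersection pairing directly using the diagonal form $h^2=1$, $e_k^2=-1$, and orthogonality of distinct basis elements. Since $m \notin S_i \cup S_j$, the cross terms between $e_m$ and the $-\sum e_k$ terms vanish, and the only contributions come from $e_m \cdot e_m = -1$ and from pairs $(k,k)$ with $k \in S_i \cap S_j$, each contributing $(-1)(-1)(-1)=-1$. Thus
\[
[C_i]\cdot [C_j] = -1 - |S_i \cap S_j| \le -1.
\]

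Finally, since $C_i$ and $C_j$ are distinct smooth symplectic spheres in the configuration, positivity of intersections of $J$-holomorphic curves (or the tameness of intersections of distinct symplectic surfaces) forces $[C_i]\cdot [C_j] \ge 0$, a contradiction. This completes the proof. The argument is genuinely short; there is no real obstacle, since the only ingredient beyond the normal form from Lemma~\ref{l:hom} is the standard positivity-of-intersections fact, and the lemma is used in contexts where $C_i$ and $C_j$ are distinct symplectic components of a configuration so this hypothesis is automatic.
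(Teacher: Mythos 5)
Your proof is correct and follows exactly the route the paper indicates (it states these lemmas are "easy consequences of Lemma~\ref{l:hom} and the hypothesized intersection relations"): the unique $+1$ coefficient from Lemma~\ref{l:hom}(\ref{e:exceptional-disjoint}) forces both classes to have the form $e_m - \sum e_k$, the diagonal intersection form then gives $[C_i]\cdot[C_j]\le -1$, and this contradicts the non-negativity of intersections between distinct components of the configuration.
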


\begin{lemma}[{\cite[Lemmas~3.11 and 3.12]{bigGS}}]\label{l:2chain}\label{l:2chainfix}
	Suppose $C_1,\dots, C_k$ are a linear chain of symplectic spheres of self-intersection $-2$ disjoint from $\cpone$ in $\cptwo\# N\cptwobar$ ($[C_i]\cdot[C_{j}]=1$ $|i-j|=1$ and $0$ otherwise). Then the homology classes are given by one of the following two options up to re-indexing the exceptional classes:
	\begin{enumerate}[label=(\Alph*)]
		\item \label{i:to} $[\Sigma_i]=e_i-e_{i+1}$ for $i=1,\dots, k$.
		\item \label{i:from} $[\Sigma_i]=e_{i+1}-e_i$ for $i=1,\dots, k$.
	\end{enumerate}
	The homology class of any surface disjoint from the chain has the same coefficient for $e_1,\dots, e_{k+1}$. 
	
	If the chain is attached to another symplectic sphere $C_0$ which does intersect $\cpone$, option~\ref{i:from} can only occur if $e_2,\dots, e_{k+1}$ all appear with coefficient $-1$ in $[C_0]$. In particular if $[C_0]\cdot h=1$, option~\ref{i:from} can only occur if $[C_0]^2\leq 1-k$.
\end{lemma}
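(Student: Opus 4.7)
The plan is to first pin down each $[C_i]$ individually using adjunction, then propagate the labeling inductively along the chain, and finally extract the consequences for surfaces either attached to or disjoint from the chain.

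First I would apply Lemma~\ref{l:hom}(1) to each $C_i$: since $[C_i]\cdot h=0$ and $[C_i]^2=-2$, writing $[C_i]=\sum_j a_{ij}e_j$ the adjunction identity yields $\sum_j a_{ij}^2=2$ together with $\sum_j a_{ij}=0$, so exactly one coefficient equals $+1$, one equals $-1$, and the rest vanish; hence $[C_i]=e_{\alpha_i}-e_{\beta_i}$ for some pair of indices. After relabeling the exceptional classes so that $[C_1]=e_1-e_2$, Lemma~\ref{l:consecutive} says $[C_2]$ shares exactly one class with $[C_1]$ with opposite signs: either the shared class is $e_2$, giving $[C_2]=e_2-e_3$ after relabeling its partner (putting us on the way to form~(A)), or the shared class is $e_1$, which after swapping labels matches form~(B). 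In form~(A), Lemma~\ref{l:pos} rules out $e_2$ appearing with coefficient $+1$ in any later $[C_j]$; and if $e_2$ appeared with coefficient $-1$ in $[C_3]$, the direct computation $[C_1]\cdot[C_3]=-1$ would contradict the non-adjacency condition $[C_1]\cdot[C_3]=0$. Hence $[C_3]$ must share $e_3$ with $[C_2]$, giving $[C_3]=e_3-e_4$ after relabeling, and iterating completes form~(A). The symmetric argument yields form~(B).

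For the ``disjoint surface'' statement, I would write any such $[D]=a_0h+\sum_j b_je_j$ and compute directly: in form~(A), the equations $0=[D]\cdot[C_i]=-b_i+b_{i+1}$ for $i=1,\dots,k$ immediately give $b_1=\cdots=b_{k+1}$, and form~(B) is identical.

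For the ``attached sphere'' statement, after possibly reversing the chain's ordering assume $C_0$ attaches at $C_1$ and work in form~(B). The non-adjacency conditions $[C_0]\cdot[C_i]=0$ for $i\geq 2$ force $c_2=\cdots=c_{k+1}=:c$, while $[C_0]\cdot[C_1]=(e_2-e_1)\cdot[C_0]=c_1-c_2=1$ gives $c_1=c+1$. Since $C_0$ meets $\cpone$, $a_0\ge 1$, so by Lemma~\ref{l:hom}(3) every $c_j\le 0$, forcing $c\le -1$; when $a_0\in\{1,2\}$, Lemma~\ref{l:hom}(4) further restricts $c_j\in\{0,-1\}$, pinning $c=-1$. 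The ``in particular'' case $a_0=1$ then gives $c_1=0$ and $c_2=\cdots=c_{k+1}=-1$, contributing at least $k$ to $\sum_j c_j^2$, so $[C_0]^2=1-\sum_j c_j^2\le 1-k$. The main obstacle I anticipate is the inductive sign-propagation: an unfortunate sub-case choice at one step only produces an inconsistency with a non-adjacency condition several steps later, so Lemma~\ref{l:pos} and the pairwise intersection computations must be deployed carefully at every step of the induction.
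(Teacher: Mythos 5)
The paper does not actually prove this statement: it is imported verbatim from \cite[Lemmas~3.11 and 3.12]{bigGS}, with the remark that such lemmas are ``easy consequences of Lemma~\ref{l:hom} and the hypothesized intersection relations.'' Your argument is precisely that kind of derivation --- adjunction plus $[C_i]^2=-2$ pins each class to $e_{\alpha}-e_{\beta}$, Lemma~\ref{l:consecutive} and the vanishing of non-adjacent intersections propagate the labels along the chain, and the disjointness/attachment equations give the last two clauses --- and it is correct, including the step you rightly flag where ruling out the wrong branch at stage $j$ uses disjointness from \emph{earlier} chain elements. The only discrepancy with the literal statement is in the attached-sphere clause for $[C_0]\cdot h\ge 3$: your argument yields a common coefficient $c\le -1$ for $e_2,\dots,e_{k+1}$ rather than exactly $-1$ (e.g.\ $4h-e_1-2e_2-2e_3-2e_4$ passes all the homological tests for $k=2$), but this does not affect the ``in particular'' conclusion $[C_0]^2\le 1-k$ when $[C_0]\cdot h=1$, which is the only form of the clause used in this paper.
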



\section{Rational blow-down relations for fillings of unicuspidal contact manifolds}\label{s:qbd}
	
	Next we study the relationships between different fillings of contact manifolds $(Y_C,\xi_C)$, where $C$ is a symplectic curve in $\cptwo$ with a single $T(a,b)$ cusp. The relationship we will focus on is symplectic \emph{rational blow-down} (in its most general sense) which means replacing a plumbing of symplectic spheres with a symplectic rational homology ball filling. The first examples of rational blow-down were introduced by Fintushel and Stern~\cite{FintushelStern}, shown to be symplectic operations by Symington~\cite{Symington1} and were generalized in~\cite{Park,Symington2,SSW}. In~\cite{BhupalStipsicz}, Bhupal and Stipsicz show that these examples are all the plumbings which can be symplectically rationally blown down, and explicitly list out all families of non-linear plumbings which admit symplectic rational blow-downs.
	
	Linear plumbings which can be symplectically rationally blown down have boundaries which are the lens spaces $L(p^2,pq-1)$. The associated plumbing graphs for this family all arise recursively as an iterated ``$2$-expansion'' of $(-4)$, where a $2$-expansion of a linear plumbing with weights $(-a_1,\dots, -a_{n-1})$ can be either $(-a_1-1,\dots, -a_{n-1},-2)$ or $(-2,-a_1,\dots, -a_{n-1}-1)$. This can be understood as starting with a $-4$-sphere and a $-1$-sphere intersecting at two points transversally, and iteratively blowing up at one of the two intersection points on a $-1$-sphere, and then looking at the linear plumbing that results from removing the $-1$-sphere after all of the blow-ups. Observe that ``interior'' vertices cannot be changed by 2-expansions. In particular, an interior vertex of square strictly less than $-2$ survives after 2-expansions. 
	
	
	Now we proceed to study when different fillings of $(Y_C,\xi_C)$ are related by sequences of symplectic rational blow-downs.
	Note that among the curves in $\cptwo$ with a single $T(a,b)$ cusp, the two exceptional cases are the only cases where the symplectic rational blow-down relations have not already been established in prior work. In the $\Ac_p$ family, there is a unique minimal symplectic filling so there are no pairs to relate. In the $\Bc_p$ case, there are two fillings which are related by a single rational blow-down of a $-4$-sphere~\cite[Proposition 6.6]{bigGS}. The Fibonacci families have $Y_C$ a lens space or connected sum of lens spaces. Symplectic rational blow-down relations between lens spaces were established in~\cite{BhupalOzbagci}. The minimal symplectic fillings of the connected sum of two lens spaces are all Stein because the contact manifolds are planar~\cite{Wendl}. Thus they are boundary sums of Stein fillings of the two lens space summands~\cite{Eliash}, so the results of~\cite{BhupalOzbagci} apply in this case as well.
	
	We begin with the first exceptional case where $C$ has a cusp of type $T(3,22)$ and self-intersection number $64$. The minimal symplectic fillings of $(Y_C,\xi_C)$ are each obtained as the complement of a concave neighborhood of an embedding of $C$ (or any of its resolutions) into a closed symplectic $4$-manifold. In~\cite{bigGS} we classified all relatively minimal symplectic embeddings of the minimal resolution of $C$.
	The minimal resolution of $C$ in this case is described by the graph below where the triple edge indicates a tangency of multiplicity three between the $+1$-sphere and the $-1$-sphere.
	
	\[
	\xygraph{
		!{<0cm,0cm>;<1.25cm,0cm>:<0cm,1cm>::}
		!{(0,0) }*+{\bullet^{-2}}="c"
		!{(1,0) }*+{\bullet^{-2}}="l3a"
		!{(2,0) }*+{\bullet^{-2}}="l3b"
		!{(3,0) }*+{\bullet^{-2}}="l3c"
		!{(4,0) }*+{\bullet^{-2}}="l3d"
		!{(5,0) }*+{\bullet^{-2}}="l3e"
		!{(-1,0) }*+{\bullet^{-1}}="l2a"
		!{(-2,0) }*+{\bullet^{+1}}="l2b"
		"c"-"l2a"
		"c"-"l3a"
		"l2a"-"l2b"
		"l3a"-"l3b"
		"l3b"-"l3c"
		"l3c"-"l3d"
		"l3d"-"l3e"
		"l2a"-@/_.1cm/"l2b"
		"l2a"-@/^.1cm/"l2b"
	}
	\]
	
	In~\cite[Section 6]{bigGS}, we prove that there are exactly three symplectic isotopy classes of relatively minimal embeddings of this minimal resolution into closed symplectic manifolds. The three isotopy classes are distinguished by the maps induced by the embeddings on second homology which are given by the three possibilities below. Note that in each case the embedding is into a blow-up of $\cptwo$ and we use the standard generators $h,e_0,\dots, e_{N-1}\in H_2(\cptwo\#N\cptwobar;\Z)$ represented by $\cpone\subset \cptwo$ and the exceptional spheres. The vertices in the graph above correspond to generators of the second homology of the concave neighborhood of the minimal resolution, and the three possibilities for their images in $H_2(\cptwo\#N\cptwobar;\Z)$ under different symplectic embeddings are (with $N=13,8,7$ respectively):
	\begin{align*}
		&h,3h-2e_0-e_1-e_2-e_3-e_4-e_5-e_6, e_1-e_7,e_7-e_8,e_8-e_9,e_9-e_{10},e_{10}-e_{11},e_{11}-e_{12},\\
		&h,3h-2e_0-e_1-e_2-e_3-e_4-e_5-e_6, e_1-e_7, e_2-e_1, e_3-e_2, e_4-e_3, e_5-e_4, e_6-e_5,\\
		&h,3h-2e_0-e_1-e_2-e_3-e_4-e_5-e_6, e_0-e_1, e_1-e_2, e_2-e_3, e_3-e_4, e_4-e_5, e_5-e_6.\\
	\end{align*}
	
	The fillings of $(Y_C,\xi_C)$ are the complements of these embeddings, which have $b_2=N-7$. Therefore we have three fillings, $V_6$, $V_1$, and $V_0$, where the indices are chosen so that $b_2(V_k) = k$. As a corollary to Proposition~\ref{p:canonical322}, $V_6$ is symplectic deformation equivalent to the negative definite symplectic plumbing~\eqref{e:64/7}.
	
	\begin{prop}\label{p:322blowdowns}
		There is a linear symplectic rational blow-down from $V_6$ to $V_1$ and a non-linear symplectic rational blow-down from $V_6$ to $V_0$, but no rational blow-down from $V_1$ to $V_0$.
	\end{prop}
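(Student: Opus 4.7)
My plan is to treat each of the three assertions in turn, relying on the plumbing description of $V_6$ from Proposition~\ref{p:canonical322} and the symplectic embedding classification from~\cite[Theorem~6.5]{bigGS}.

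For the linear rational blow-down $V_6 \to V_1$, I would exhibit inside $V_6$ the subchain of five symplectic spheres with weights $(-2,-2,-2,-2,-8)$ obtained by deleting the $-3$-sphere branch from the plumbing~\eqref{e:64/7}. The boundary of a regular neighborhood of this subchain is the lens space $L(36,29) = L(6^2, 6\cdot 5 - 1)$, which sits at the base of the Wahl family and bounds a rational homology ball. Applying Symington's symplectic rational blow-down, this subchain may be replaced by the Wahl ball, producing a symplectic filling of $(Y_C,\xi_C)$ of second Betti number $6 - 5 = 1$. By the classification of minimal symplectic fillings coming from~\cite[Theorem~6.5]{bigGS}, this filling must coincide with $V_1$ up to symplectic deformation.

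For the non-linear rational blow-down $V_6 \to V_0$, I would identify the plumbing~\eqref{e:64/7} with one of the non-linear plumbings in the Bhupal--Stipsicz list~\cite{BhupalStipsicz} of configurations admitting a symplectic rational blow-down to a rational homology ball. This substitution replaces all of $V_6$ (which has $b_2 = 6$) by a rational-homology-ball filling of $(Y_C,\xi_C)$; by the classification this must be $V_0$.

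To rule out a direct rational blow-down $V_1 \not\to V_0$, I would argue as follows. Any such operation would require a Bhupal--Stipsicz admissible subconfiguration $P\subset V_1$ with $b_2(P) = b_2(V_1) - b_2(V_0) = 1$, and the only such plumbing is a single $(-4)$-sphere. However, $V_1$ contains no symplectic $(-4)$-sphere: realizing $V_1$ as the complement of the second embedding inside $\cptwo \# 8\,\cptwobar$, a direct computation shows that the orthogonal complement of the configuration classes in $H_2(\cptwo \# 8\,\cptwobar;\mathbb{Q})$ is one-dimensional, spanned by $\alpha = -3e_0 + e_1 + \dots + e_7$, with $\alpha^2 = -16$. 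Hence every class in $H_2(V_1;\mathbb{Z})$ modulo torsion is an integer multiple $k\alpha$ with self-intersection $-16k^2$, which is never equal to $-4$. The main technical point is justifying the two blow-downs symplectically: for the linear case, despite the $-3$-branch attached to the central $-2$-sphere of the subchain, the blow-down is a local operation on a regular neighborhood of the subchain, and the rest of $V_6$ (including the $-3$-sphere punctured by a small disk) survives and is glued onto the boundary of the Wahl ball; for the non-linear case, the work lies in matching~\eqref{e:64/7} with one of the specific families in Bhupal--Stipsicz.
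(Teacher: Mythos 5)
Your proposal is correct and follows essentially the same route as the paper: blowing down the linear $(-8,-2,-2,-2,-2)$ Fintushel--Stern chain to reach $V_1$, recognizing the whole plumbing~\eqref{e:64/7} as a Bhupal--Stipsicz configuration (Figure~1(f) with $q=2$) to reach $V_0$, and excluding $V_1\to V_0$ because the intersection form of $V_1$ is $\langle -16\rangle$, so it contains no symplectic $(-4)$-sphere. The only cosmetic differences are your labelling of the lens space as $L(36,29)$ rather than $L(36,5)$ (these are diffeomorphic since $5\cdot 29\equiv 1 \pmod{36}$) and the remark about locality of the blow-down, which the paper leaves implicit.
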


	Note that any sequence of symplectic rational blow-downs will yield a sequence of symplectic fillings. Since we have a classification of the fillings of $(Y_C,\xi_C)$, it suffices to understand when any pair of these symplectic fillings is related by a single symplectic rational blow-down.

	\begin{proof}[Proof of Proposition~\ref{p:322blowdowns}]
		It is apparent from the plumbing description of $V_6$, that there is a linear plumbing of symplectic spheres with self-intersections $(-8,-2,-2,-2,-2)$. This is one of Fintushel--Stern's original rational blowdowns $L(p^2,p-1)$ with $p=6$. Rationally blowing down this chain yields a symplectic filling with $b_2=1$, so it is necessarily $V_1$.
		
		Since $V_6$ itself is a plumbing, its replacement by the rational homology ball filling $V_0$ is itself a rational blow-down relation, and indeed this plumbing coincides with the plumbing of Figure 1(f) with $q=2$ in~\cite{BhupalStipsicz}.

		We now show that there is no rational blow-down from $V_1$ to $V_0$. There is a unique plumbing with $b_2=1$ which can be rationally blown down which is a $-4$-sphere. The non-torsion part of $H_2(V_1)$ is generated by the orthogonal complement of the classes in the corresponding embedding, namely $3e_0 - e_1 - \dots - e_7$. Therefore the intersection form of $V_1$ is $\langle -16 \rangle$, so no homology class has self-intersection $-4$. Thus there can be no rational blow-down from $V_1$ to $V_0$.
	\end{proof}

	Next we consider the second exceptional case, where $C$ has a singularity of type $(6,43)$ and self-intersection number $16^2$. Again, the minimal symplectic fillings of $(Y_C,\xi_C)$ are precisely the complements of the relatively minimal symplectic embeddings of a given resolution of $C$ into closed symplectic manifolds. We will consider a resolution which is between the minimal and minimal normal crossing resolution indicated by the graph below. Here the $+1$- and $-4$-spheres intersect tangentially with multiplicity $3$, and the $-1$-sphere intersects these two at the same point transversally.
	
	\[
	\xygraph{
		!{<0cm,0cm>;<1.25cm,0cm>:<0cm,1cm>::}
		!{(3,0) }*+{\bullet^{-2}}="c"
		!{(2,0) }*+{\bullet^{-2}}="l3a"
		!{(1,0) }*+{\bullet^{-2}}="l3b"
		!{(0,0) }*+{\bullet^{-2}}="l3c"
		!{(-1,0) }*+{\bullet^{-4}}="l3d"
		!{(-2,0) }*+{\bullet^{+1}}="l3e"
		!{(4,0) }*+{\bullet^{-2}}="l2a"
		!{(5,0) }*+{\bullet^{-2}}="l2b"
		!{(1,1) }*+{\bullet^{-2}}="l1a"
		!{(0,1) }*+{\bullet^{-2}}="l1b"
		!{(-1,1) }*+{\bullet^{-1}}="l1c"
		"c"-"l2a"
		"c"-"l3a"
		"l2a"-"l2b"
		"l3a"-"l3b"
		"l3b"-"l3c"
		"l3c"-"l3d"
		"l3d"-"l3e"
		"l3d"-@/_.1cm/"l3e"
		"l3d"-@/^.1cm/"l3e"
		"l1a"-"l1b"
		"l1b"-"l1c"
		"l1c"-"l3e"
		"l1c"-"l3d"
	}
	\]

	Again, we have results from~\cite[Section 6]{bigGS} classifying the symplectic isotopy classes of relatively minimal embeddings of this resolution. In this case, there are six such embeddings distinguished by the maps they induce on second homology which are given as follows.

Let $W_9$ denote the filling complementary to the embedding of the resolution into $\cptwo\#18\cptwobar$ with homology classes:
$$\begin{array}{llllllll}	& h-e_0-e_{16},& e_{16}-e_{17}, &e_{17}-e_{18}& & & \\
	h, & 3h-2e_0-e_1-\dots -e_9, & e_1-e_{10}, & e_{10}-e_{11}, & e_{11}-e_{12}, & e_{12}-e_{13}, & e_{13}-e_{14}, & e_{14}-e_{15}
\end{array}$$

Let $W_6$ denote the filling complementary to the embedding of the resolution into $\cptwo\#16\cptwobar$ with homology classes:	
$$\begin{array}{llllllll}	& h-e_8-e_9, & e_8-e_7, & e_9-e_8& & & \\
	h, & 3h-2e_0-e_1-\dots -e_9, & e_1-e_{10}, & e_{10}-e_{11}, & e_{11}-e_{12}, & e_{12}-e_{13}, & e_{13}-e_{14}, & e_{14}-e_{15}
\end{array}$$

Let $Z_6$ denote the filling complementary to the embedding of the resolution into $\cptwo\#16\cptwobar$ with homology classes:	
$$\begin{array}{llllllll}	&  h-e_8-e_9, & e_8-e_7, & e_7-e_6& & & \\
	h, & 3h-2e_0-e_1-\dots -e_9, & e_1-e_{10}, & e_{10}-e_{11}, & e_{11}-e_{12}, & e_{12}-e_{13}, & e_{13}-e_{14}, & e_{14}-e_{15}
\end{array}$$

Let $W_4$ denote the filling complementary to the embedding of the resolution into $\cptwo\#14\cptwobar$ with homology classes:	
$$\begin{array}{llllllll}	& h-e_0-e_{11},& e_{11}-e_{12},& e_{12}-e_{13}& & & \\
	h,& 3h-2e_0-e_1-\dots-e_9,& e_1-e_{10},& e_2-e_1,& e_3-e_2,& e_4-e_3,& e_5-e_4,& e_6-e_5
\end{array}$$

Let $W_1$ denote the filling complementary to the embedding of the resolution into $\cptwo\#11\cptwobar$ with homology classes:	
$$\begin{array}{llllllll}	& h-e_8-e_9, & e_8-e_7, & e_9-e_8& & & \\
	h,& 3h-2e_0-e_1-\dots-e_9,& e_1-e_{10},& e_2-e_1,& e_3-e_2,& e_4-e_3,& e_5-e_4,& e_6-e_5
\end{array}$$

Let $W_0$ denote the filling complementary to the embedding of the resolution into $\cptwo\#10\cptwobar$ with homology classes:	
$$\begin{array}{llllllll}	& h-e_8-e_9 &  e_8-e_7 & e_9-e_8& & & \\
	h,& 3h-2e_0-e_1-\dots-e_9,& e_0-e_1,& e_1-e_2,& e_2-e_3,& e_3-e_4,& e_4-e_5,& e_5-e_6
\end{array}$$

		Observe that the subscript indicates the second Betti number. A posteriori, due to Proposition~\ref{p:canonical643}, $W_9$ is the plumbing of symplectic spheres according to the graph in~\eqref{e:256/x}.
		
		\begin{prop}\label{p:643blowdowns} Let $C$ be the rational cuspidal curve with a single $T(6,43)$ singularity and self-intersection number $16^2$.
		There exists a rational blow-down of a connected embedded plumbing of symplectic spheres from one filling of $(Y_C,\xi_C)$ to another if and only if there is an edge between the fillings in following graph.
			\[
		\xygraph{
			!{<0cm,0cm>;<1cm,0cm>:<0cm,0.75cm>::}
			!{(-1,2) }*+{W_9}="w9"
			!{(-1,1) }*+{W_6}="w6"
			!{(2,1) }*+{Z_6}="z6"
			!{(-4,0) }*+{W_4}="w4"
			!{(-3,-1) }*+{W_1}="w1"
			!{(2,-2) }*+{W_0}="w0"
			"w9"-"w6"
			"w9"-"z6"
			"w9"-"w4"
			"w9"-"w0"
			"w6"-"w1"
			"w4"-"w1"
		}
		\]
		\end{prop}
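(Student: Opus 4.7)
The plan is to follow the two-pronged template of Proposition~\ref{p:322blowdowns}: for each ordered pair of fillings with strictly decreasing $b_2$, I either exhibit a connected embedded plumbing of symplectic spheres in the source whose rational blow-down yields the target, or rule out the existence of such a plumbing. The starting point is Proposition~\ref{p:canonical643}, which identifies $W_9$ symplectically with the negative-definite plumbing \eqref{e:256/x}, making it the natural source of most of the rational blow-downs under consideration.

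For the six claimed edges I would exhibit explicit sub-configurations. The edge $W_9 \to W_0$ is immediate: \eqref{e:256/x} itself is a non-linear plumbing in the Bhupal-Stipsicz list whose boundary admits a rational homology ball filling. For $W_9 \to W_4$, the five consecutive vertices at the tail of the long arm of \eqref{e:256/x} give a linear sub-chain with weights $(-8,-2,-2,-2,-2)$, realizing the Fintushel-Stern $L(36,5)$ rational blow-down. For $W_9 \to W_6$ and $W_9 \to Z_6$, the $-6$ vertex $h$ combines with its neighbor $d$ and either of $d$'s other $-2$-neighbors ($c$ or $e$) to form two inequivalent linear sub-chains of weights $(-6,-2,-2)$, each an $L(16,3)$-type Fintushel-Stern plumbing; that these two blow-downs yield the distinct fillings $W_6$ and $Z_6$ respectively can be verified by tracking the residual symplectic configuration and matching it to the given homological data. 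For $W_6 \to W_1$ and $W_4 \to W_1$, the spheres of \eqref{e:256/x} disjoint from the region already blown down remain symplectically embedded in $W_6$ and $W_4$, providing a 5-sphere chain of weights $(-2,-2,-2,-2,-8)$ and a 3-sphere chain of weights $(-6,-2,-2)$, respectively, each realizing the desired rational blow-down.

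For the non-edges I would use two kinds of obstructions. The first, mirroring the argument for $V_1 \not\to V_0$ in Proposition~\ref{p:322blowdowns}, is an intersection-form obstruction: using the listed homological data one computes the intersection form of each filling and rules out pairs where the intersection form of the target cannot appear as the orthogonal complement of a negative-definite sublattice of the source of the appropriate rank. This should handle the pairs ending at $W_0$ and the pair $W_9 \not\to W_1$. For pairs where the numerical obstructions are inconclusive, I would classify the possible symplectic sub-configurations of the source filling using McDuff's Theorem~\ref{thm:mcduff} together with Lemmas~\ref{l:hom}--\ref{l:2chain}, and rule out each candidate from the Bhupal-Stipsicz list by checking the homology classes available in the ambient blow-up of $\cptwo$.

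The main obstacle I anticipate is ruling out $Z_6 \not\to W_1$, along with $W_6 \not\to W_4$ and $Z_6 \not\to W_4$, since in those cases the intersection form alone does not obstruct a rational blow-down and a fine analysis of which connected plumbings can embed symplectically is required. Here one would need to exploit the explicit homological descriptions of the embeddings producing $Z_6$, $W_6$ and $W_4$, together with the constraints from Section~\ref{s:background}, to enumerate the possible embedded plumbings of symplectic spheres and verify that none of them is a Bhupal-Stipsicz plumbing whose blow-down matches the target. A subsidiary subtlety is confirming that the two inequivalent $(-6,-2,-2)$ sub-chains in $W_9$ actually give the two distinct fillings $W_6$ and $Z_6$, rather than both yielding the same filling up to symplectomorphism.
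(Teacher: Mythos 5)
Your overall template is the paper's: identify $W_9$ with the plumbing~\eqref{e:256/x} via Proposition~\ref{p:canonical643}, exhibit the $(-8,-2,-2,-2,-2)$ and two $(-6,-2,-2)$ sub-chains plus the full non-linear plumbing for the edges, and obstruct the non-edges by analyzing which classes in each filling can be represented by symplectic spheres. Most of your outline is sound, and your treatment of $Z_6$ (and of $W_1\not\to W_0$ via the form $\langle -16\rangle$) matches the paper. But there is one genuine gap: you claim that intersection-form or homological-class obstructions ``should handle the pairs ending at $W_0$.'' This fails for $W_6\not\to W_0$. The only candidate plumbing there is the non-linear Bhupal--Stipsicz plumbing~\eqref{e:64/7} with $b_2=6$, and \emph{every} one of its sphere classes is homologically realizable in $W_6$ with the correct pairwise intersections (the chain $e_2-e_1-e_{10}-\cdots-e_{15},\,e_3-e_2,\ldots,e_6-e_5$ together with $e_0-e_5-e_6$); no intersection-form or adjunction-type argument rules it out. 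The paper has to switch to a genuinely geometric obstruction: glue back the concave neighborhood, use Lemma~\ref{l:blowdown} to realize auxiliary exceptional spheres, blow down to $\cptwo$, and show the resulting configuration (a cuspidal cubic with an inflectional tangent line and a second tangent line through the inflection point) is impossible by Riemann--Hurwitz. Your proposed toolbox does not contain this step, and the plan as written would stall exactly there.

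A secondary issue is that you defer the identification of which $(-6,-2,-2)$ blow-down gives $W_6$ and which gives $Z_6$ to a ``subsidiary subtlety,'' but this is load-bearing: the paper resolves it by showing the blow-down of the chain through $e$ is simply connected (the relevant meridian bounds half of the adjacent $-2$-sphere), while $W_6$ has $H_1\neq 0$ because $\det Q_{W_6}=64$ cannot surject onto $H_1(\partial W_6)\cong\Z/256\Z$; combined with the fact that $Z_6$ admits no further blow-downs, this pins down both labels and also yields the edge $W_6\to W_1$. Without some such argument the edges $W_9\to Z_6$ and $W_6\to W_1$ are not actually established. Finally, note that the disjointness needed to blow down the $(-8,\ldots)$ and $(-6,-2,-2)$ chains in either order holds only for the chain through $c$, not the one through $e$ (which meets the $-8$-chain at the $e$--$f$ adjacency), so your phrase ``either of $d$'s other neighbors'' needs this caveat.
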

		
		\begin{proof}
		First, we observe the existence of the rational blow-downs from $W_9$ to $W_6$ and $W_4$, and from $W_6$ and $W_4$ to $W_1$.
		Using the fact that $W_9$ is necessarily the plumbing~\eqref{e:256/x} (by Proposition~\ref{p:canonical643}), we can visibly see sub-plumbings of the form $(-8,-2,-2,-2,-2)$ and $(-6,-2,-2)$. Note that these symplectic embeddings are not unique, but it is possible to realize disjoint embeddings of both the $(-8,-2,-2,-2,-2)$ and $(-6,-2,-2)$ chains as disjoint sub-plumbings. Rationally blowing down a $(-8,-2,-2,-2,-2)$ chain yields a symplectic filling with $b_2=4$ which is necessarily $W_4$. Rationally blowing down a $(-6,-2,-2)$ chain yields symplectic fillings with $b_2=6$, which must be either $W_6$ or $Z_6$. Rationally blowing down disjoint plumbings $(-8,-2,-2,-2,-2)$ and $(-6,-2,-2)$ yields a symplectic filling with $b_2=1$, so it must be $W_1$. Using the homological properties that define $Z_6$, we will show below that $Z_6$ does not admit any further rational blow-downs. Therefore, rationally blowing down a $(-6,-2,-2)$ plumbing in $W_9$ which is disjoint from a $(-8,-2,-2,-2,-2)$ plumbing, necessarily results in $W_6$. This shows as well that there is a further rational blow-down from $W_6$ to $W_1$. If we exchange the order of which of the two disjoint plumbings we rationally blow-down first, we see the sequence of rational blow-downs from $W_9$ to $W_4$ to $W_1$.
		
		Next, we will show there is a rational blow-down from $W_9$ to $Z_6$. Consider the $(-6,-2,-2)$ chain embedded as a sub-plumbing of $W_9$ given by turning right instead of left at the $3$-valent vertex. This sub-plumbing is not disjoint from the $(-8,-2,-2,-2,-2)$ chain, and thus potentially yields a different result from the rational blow-down of the other embedding of the $(-6,-2,-2)$ chain (where you turn left at the $3$-valent vertex). We will now verify that rationally blowing down this embedding of the $(-6,-2,-2)$ plumbing is not $W_6$, and thus must be $Z_6$.
		
		First, we argue that the result of this rational blow-down is simply connected.
		Since $W_9$ is a tree plumbing of spheres, it is simply connected.
		The fundamental group of the rational homology ball which replaces the $(-6,-2,-2)$ plumbing is normally generated by the meridian of the last $-2$-sphere in the chain, so it suffices to show this curve is null-homotopic in the complement of the plumbing in $W_9$.
		This meridian can be realized as the equator of the next $-2$-sphere $S$ in the $W_9$ plumbing, so it bounds a disk (the other half of the $-2$-sphere $S$) in the complement of the $(-6,-2,-2)$ chain. Consequently the result of this rational blow-down is a simply-connected filling, and in particular it has trivial $H_1$.
		
		We will next show that $W_6$ necessarily has non-trivial $H_1$. To see this, consider the long exact sequence of the pair $(W_6,\partial W_6)$:
		\[
		0 = H_2(\partial W_6) \to H_2(W_6) \to H_2(W_6,\partial W_6)\cong H^2(W_6) \to H_1(\partial W_6)\cong \Z/{256}\Z
		\]
		Here we identify $H_2(W_6,\partial W_6)\to H^2(W_6)$ by Alexander--Lefschetz duality. Under this identification, the map $H_2(W_6)\to H^2(W_6)$ in the sequence is described by the intersection form $Q_{W_6}$ of $W_6$, by sending $a\in H_2(W_6)$ to $Q_{W_6}(a,\cdot)$. The size of the cokernel of this map is the determinant of $Q_{W_6}$. In~\cite[Proof of Theorem~6.5, p. 52]{bigGS}, we computed the intersection forms for $W_6$ and $Z_6$, by finding an integral homology basis for the orthogonal complement in $H_2(\cptwo\#16\cptwobar)$
		of the corresponding embeddings of the resolutions for $W_6$ and $Z_6$ (which are listed above). We calculated that $\det(Q_{W_6})=64$ and $\det(Q_{Z_6})=256$. (Note, this distinguishes $W_6$ from $Z_6$.) On the other hand, $H_1(\partial W_6) \cong \Z/{256}\Z$ since $\partial W_6 = -S^3_{256}(T(6,43))$. Since the co-kernel of the map $H_2(W_6)\to H^2(W_6)$ has order $64$, the map from $H^2(W_6)$ to $H_1(\partial W_6)\cong \Z/{256}\Z$ cannot be surjective. Therefore, there are some elements which are not in the kernel of the map $H_1(\partial W_6)\to H_1(W_6)$, so $H_1(W_6)\neq 0$. This shows that the rational blow-down of this embedding of the $(-6,-2,-2)$ chain is necessarily $Z_6$.
		
		Finally, the entirety of the $W_9$ plumbing can be rationally blown down in a non-linear way (this plumbing is~\cite[Figure~1(j)]{BhupalStipsicz} for $q=4$) to $W_0$.
		
		Next, we will show that fillings which are not connected by edges are not related by a symplectic rational blow-down.
		Note that because rational blow-down strictly decreases $b_2$, we only need to obstruct rational blow-downs from larger fillings to smaller fillings.
		
		We will next show that $Z_6$ cannot be symplectically rationally blown down to any other filling. To do this, we start by looking at the classes in $H_2(Z_6)$ which could be represented by a symplectic sphere.
		Classes in $H_2(Z_6)$ must be in the orthogonal complement of the classes listed above for the $Z_6$ embedding of the resolution in $H_2(\cptwo\#16\cptwobar)$. Classes in this orthogonal complement are classes of the form $\sum_{i=0}^{15} a_ie_i$ where
	\[
		a_1=a_{10}=\dots = a_{15}, \qquad a_6=a_7=a_8=-a_9, \qquad 2a_0+a_1+\dots+a_9=0
	\]
		where, using the equalities above, the last equation can equivalently be written as
		\begin{equation}\label{eq:orthz6}
			2a_0+a_1+\dots+a_5+2a_6=0.
		\end{equation}
		If the class represents a symplectic sphere, by Lemma~\ref{l:hom}, there exists an index $i_0\in \{0,\dots, 15\}$ such that $a_{i_0}=1$ and $a_i\in \{-1,0\}$ for all $i\neq i_0$. Combining this with the forced equalities of coefficients above, we see that $i_0\notin\{1,6,7,8,10,\dots, 15\}$. Furthermore $i_0\neq 9$ because if $a_9=1$, $a_6=a_7=a_8=-1$, but then there are no solutions to Equation~\eqref{eq:orthz6} if $a_0,\dots,a_5\in \{0,-1\}$. Therefore the only possibilities are that $i_0\in \{0,2,3,4,5\}$, and we get the following possibilities:
		\[
		\begin{array}{ll}
			e_0 - e_1 - e_{10} - \dots - e_{15} - e_i, & 2 \le i \le 5\\
			e_0 - e_i - e_j, & 2 \le i < j \le 5\\
			e_i-e_1-e_{10} - \dots - e_{15}, & 2\le i \le 5\\
			e_i - e_j, & 2 \le i \neq j \le 5.
		\end{array}
		\]
		Note that classes of the first type have square $-9$, the second type have square $-3$ and the third type have square $-8$, and the last have square $-2$. In particular, any symplectic plumbing of spheres which embeds in $Z_6$ can only include spheres with self-intersection numbers in the set $\{-2,-3,-8,-9\}$. 
		
		We start with a general observation: any plumbing of spheres which can be symplectically rationally blown down contains at least one sphere with self-intersection strictly less than $-3$. For linear plumbings this follows from the fact that they are all obtained as $2$-expansions of $(-4)$, and for non-linear plumbings this follows from inspection of the families in~\cite{BhupalStipsicz} (note the parameters $p,q,r$ must be non-negative).		
		
		Therefore, the only possible plumbings that may be symplectically rationally blown down which can embed in $Z_6$ must include at least one sphere of self-intersection $-8$ or $-9$. Because $b_2(Z_6)=6$, any embeddable plumbing must have $b_2\leq 6$. The only $2$-expansion of $(-4)$ of length $\leq 6$ which includes a $-9$-sphere is $(-9,-2,-2,-2,-2,-2)$. Those that include a $-8$-sphere are $(-2,-8,-2,-2,-2,-3)$ and $(-8,-2,-2,-2,-2)$. For the non-linear plumbings in~\cite{BhupalStipsicz}, we can immediately rule out embeddings of plumbings in Figure 1(a),(e),(h),(i),(j) and 2(a),(b),(c) because they contain either $-4$- or $-6$-spheres. We rule out 1(b),(c),(d),(g) because any plumbing in these families has $b_2>6$ when self-intersections are restricted to lie in $\{-2,-3,-8,-9\}$. Thus the only non-linear plumbing which could be rationally blown down with $b_2\leq 6$, and self-intersection numbers in this class is the case of~\cite[Figure~1(f) with $q=2$]{BhupalStipsicz}, which happens to be the plumbing~\eqref{e:64/7}. We will rule out these remaining cases now using the pairwise intersections of these classes (we will freely use the lemmas of Section~\ref{s:background}). If there were an embedding of the linear plumbing $(-9,-2,-2,-2,-2,-2)$ in $Z_6$, up to permuting the indices $\{2,3,4,5\}$, the first sphere in the chain would represent $e_0-e_1-e_{10}-\dots-e_{15}-e_2$, and the next three must represent $e_2-e_3$, $e_3-e_4$, $e_4-e_5$ (in order for these spheres to have pairwise intersections according to the linear chain). This leaves no possibility $e_i-e_j$, $i,j\in\{2,3,4,5\}$ for the fourth $-2$-sphere in the chain which has intersection $0$ with the first two $-2$-spheres and $1$ with the third. 
		An embedding of $(-8,-2,-2,-2,-2)$ would necessarily (up to a permutation of $\{2,3,4,5\}$) have the first four spheres in homology classes $e_2-e_1-e_{10}-\cdots-e_{15}$, $e_3-e_2$, $e_4-e_3$, $e_5-e_4$, leaving no possible $i,j\in \{2,3,4,5\}$ such that $e_i-e_j$ could have intersection $+1$ with $e_5-e_4$ and intersection $0$ with the first three classes. Note that the non-linear plumbing~\eqref{e:64/7} which is case 1(f) on the Bhupal--Stipsicz list, contains the linear chain $(-8,-2,-2,-2,-2)$ as a sub-plumbing so this non-linear plumbing is also obstructed from symplectically embedding into $Z_6$. Similarly, an embedding of the linear chain $(-2,-8,-2,-2,-2,-3)$ would (up to permutation of $\{2,3,4,5\}$) have the first four spheres in classes, $e_2-e_3$, $e_3-e_1-e_{10}-\cdots - e_{15}$, $e_4-e_3$, $e_5-e_4$, leaving no options for the fifth sphere in the chain with the correct intersection number.
		Thus there is no embedding into $Z_6$ of any plumbing which can be symplectically rationally blown down.
		
		Note at this point, we have established that $Z_6$ cannot be the filling which results from rationally blowing down the $(-6,-2,-2)$ chain in $W_9$ which is disjoint from the $(-8,-2,-2,-2,-2)$ chain (since $Z_6$ admits no further rational blow-downs). Because $W_6$ is the only other symplectic filling with $b_2=6$, $W_6$ is necessarily the result of the rational blow-down of this disjoint $(-6,-2,-2)$ chain in $W_9$. This establishes that $W_6$ can be rationally blown down to $W_1$, and gives another way to see that $W_6$ and $Z_6$ are not symplectomorphic.
		
		Next, we will similarly obstruct rational blow-downs from $W_6$ to $W_4$ and $W_0$, though the obstruction to a rational blow-down to $W_0$ will be significantly more subtle. In this case, the orthogonality relations imply classes in $H_2(W_6;\Z)$ are precisely those of the form $\sum_{i=0}^{15}a_ie_i$ where
		$$a_7=a_8=a_9=0, \qquad a_1=a_{10}=\dots = a_{15}, \qquad 2a_0+a_1+\dots+a_6=0.$$
		Proceeding as before, the classes satisfying these constraints which are represented by symplectic spheres are
		\[
		\begin{array}{ll}
			e_0 - e_1 - e_{10} + \cdots - e_{15} - e_i, & 2 \le i \le 6\\
			e_0 - e_i - e_j, & 2 \le i < j \le 6\\
			e_i - e_1 - e_{10} + \cdots - e_{15}, & 2 \le i \le 6\\
			e_i - e_j, & 2 \le i \neq j \le 6.
		\end{array}
		\]
		Note that the squares of these classes are $\{-9,-3,-8,-2\}$. Since the only plumbing with $b_2=2$ which can be rationally blown down is $(-5,-2)$ and there are no symplectic $-5$-spheres in $W_6$, there can be no symplectic rational blow-down from $W_6$ to $W_4$. We can obstruct the embeddings into $W_6$ of symplectic linear plumbings of with $b_2=6$ as before. The only two possibilities are $(-9,-2,-2,-2,-2,-2)$ and $(-2,-8,-2,-2,-2,-3)$. In the former case, the first five spheres in the chain would necessarily (up to permuting indices $\{2,\dots, 6\}$) represent $e_0-e_1-e_{10}-\cdots-e_{15}-e_2$, $e_2-e_3$, $e_3-e_4$, $e_4-e_5$, $e_5-e_6$, leaving no option for the sixth sphere in the chain. In the latter case, the chain would need to take the form $e_3-e_2$, $e_2-e_1-e_{10}-\cdots-e_{15}$, $e_4-e_2$, $e_5-e_4$, $e_6-e_5$, $e_0-e_6-e_j$ for some $j\in\{2,\dots, 5\}$, but any value of $j$ would result in a non-zero intersection of the last sphere in the chain with one of the first four spheres. 
		
		Next we consider potential non-linear plumbings which can be rationally blown down. In fact, there is a unique non-linear plumbing from the Bhupal-Stipsicz list with $b_2=6$ which we cannot rule out with homology classes alone. This is the plumbing~\cite[Figure~1(f)]{BhupalStipsicz} for $q=2$, which happens to be the plumbing~\eqref{e:64/7} (the filling of the other exceptional cuspidal contact manifold). This consists of a linear chain $(-8,-2,-2,-2,-2)$ with an additional $-3$-sphere intersecting the second to last $-2$-sphere. Up to permutation of indices $\{2,\dots, 6\}$, the spheres in the linear chain necessarily represent $e_2-e_1-e_{10}-\dots-e_{15}$, $e_3-e_2$, $e_4-e_3$, $e_5-e_4$, $e_6-e_5$ and the additional $-3$-sphere would represent $e_0-e_5-e_6$. Since all of these classes could be represented by symplectic spheres in $W_6$, obstructing this rational blowdown is a little more subtle.
		
		\begin{figure}
			\centering
			\includegraphics[scale=1.1]{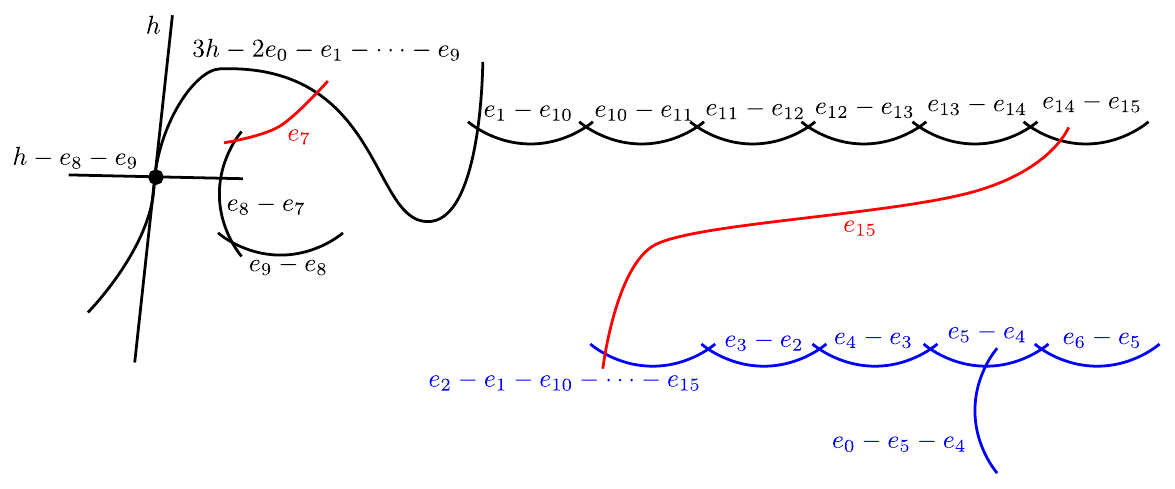}
			\caption{Embedding into $\cptwo\# 16\cptwobar$ of the resolution complementary to $W_6$, together with a hypothesized disjoint plumbing with the homology classes of each component specified, as well as exceptional spheres representing $e_7$ and $e_{15}$.}
			\label{fig:disjoint}
		\end{figure}
		
		\begin{figure}
			\centering
			\includegraphics[scale=1]{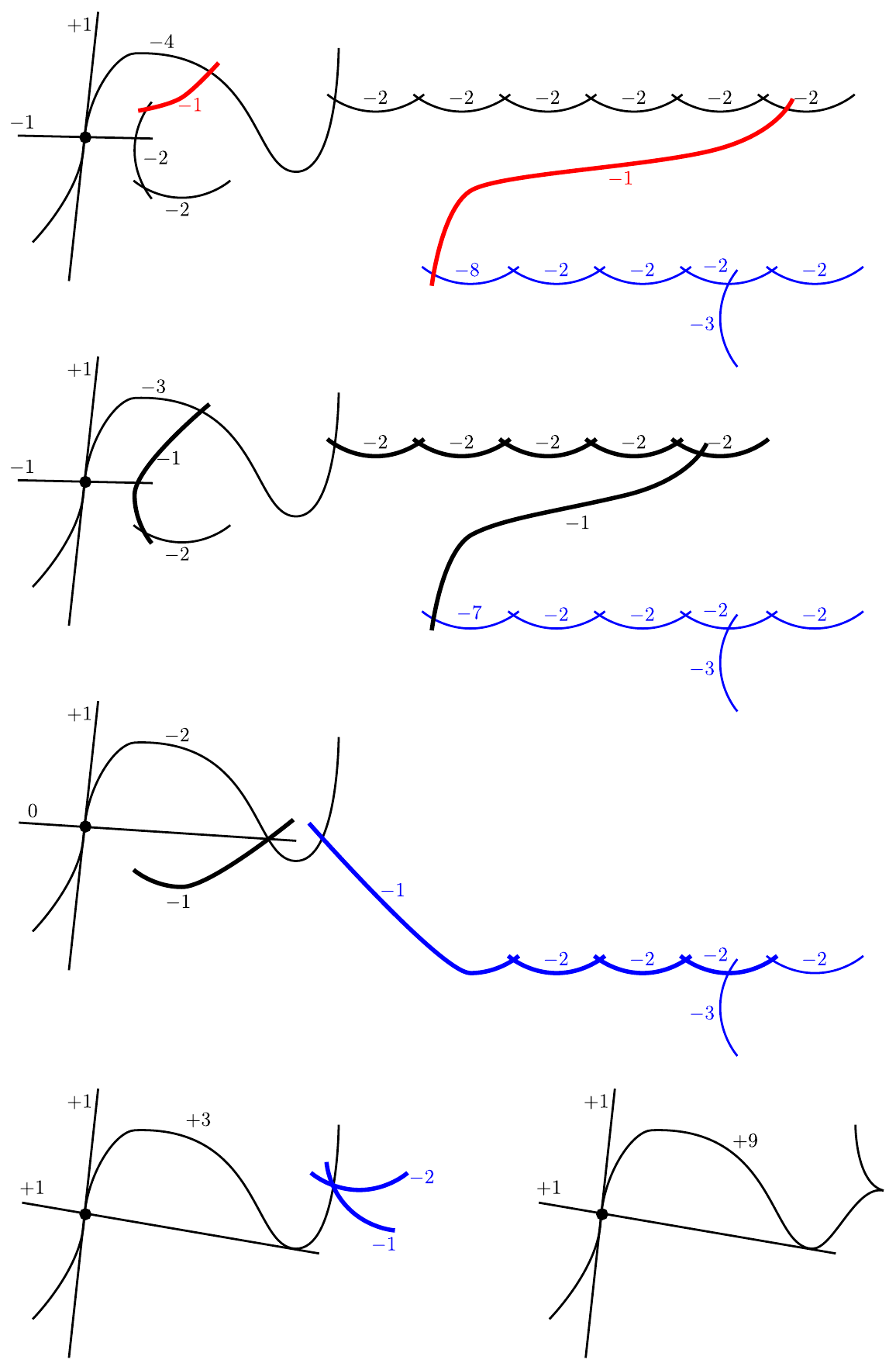}
			\caption{Sequence of blow-downs from $\cptwo\# 16\cptwobar$ to $\cptwo$, starting with the configuration of Figure~\ref{fig:disjoint}, and tracking the image under the blow-downs, ending with a cuspidal cubic and two lines (one tangent to order $3$ and the other tangent to order $2$ and passing through the inflection point). At each stage, the thickened curves indicate the curves which are blown down to get to the next figure in the sequence.}
			\label{fig:blowdownseq}
		\end{figure}
		
		Suppose there exists such a symplectic plumbing in $W_6$. By gluing the concave neighborhood of the resolution, this is equivalent to assuming that in $\cptwo\#16\cptwobar$, there exists an embedding of the resolution configuration (with homology classes as specified for $W_6$) and a disjoint embedding of the plumbing~\eqref{e:64/7} representing homology classes as described above. Observe that the classes $e_7$ and $e_{15}$ intersect all of the components in these two configurations non-negatively. Therefore, by Lemma~\ref{l:blowdown}, there exist embedded exceptional spheres representing $e_7$ and $e_{15}$ which intersect all curve components in these configurations non-negatively (so the geometric and algebraic intersection numbers match). Consequently, we have an embedding into $\cptwo\#16\cptwobar$ of a configuration of curves as in Figure~\ref{fig:disjoint}. Blowing down symplectic $-1$-spheres in this configuration representing $e_i$ classes repeatedly as in Figure~\ref{fig:blowdownseq}, we eventually reach a configuration of curves in $\cptwo$ consisting of the following components: a cubic $C$ with a simple cusp at a point $r$, a line $L$ which tangentially intersects $C$ at an inflection point $p$ (with multiplicity $3$), and a line $T$ which intersects $C$ and $L$ transversally at $p$, and intersects $C$ tangentially (with multiplicity $2$) at an additional point $q$. (Here $r,p,q$ are all distinct points on $C$.) In fact, such a configuration of symplectic curves cannot exist, by the Riemann--Hurwitz formula. We fix an almost complex structure $J$ such that $C$, $L$, and $T$ are $J$-holomorphic. Then, using the pencil of $J$-holomorphic lines through $p$ gives a degree-2 map $\pi\colon\cpone\to\cpone$ with at least two ramification points (corresponding to the inflection line and the cusp respectively).
		Therefore Riemann--Hurwitz reads: $2 = 2\cdot 2 - \sum (e_\pi(p)-1)$, which implies that these are the only two ramification points, from which we deduce that there is no other tangent drawn to the cubic from the inflection point. (In fact, this exact argument appears in~\cite[p. 51]{bigGS}.) Thus, we reach a contradiction, and the plumbing~\eqref{e:64/7} cannot embed symplectically in $W_6$, so we conclude there is no rational blow-down from $W_6$ to $W_0$.

		To see there is no rational blow-down from $W_4$ to $W_0$, observe that $H_2(W_4)$ consists of classes $\sum_{i=0}^{13} a_ie_i$ where
		\[
		a_1=a_2=a_3=a_4=a_5=a_6=a_{10}, \qquad a_{11}=a_{12}=a_{13} = -a_0, \qquad 2a_0+a_1+\dots+a_9=0.
		\]
		The classes of this form which can be represented by symplectic spheres are
		\[
		\begin{array}{ll}
			e_0 - e_i - e_j - e_{11} - e_{12} - e_{13}, & 7 \le i < j \le 9\\
			e_i - e_j, & 7 \le i \neq j \le 9.
		\end{array}
		\]
		These classes have squares $-6$ or $-2$. The only plumbing with $b_2\leq 4$ which can be symplectically rationally blown down and involves only $-6$- and $-2$-spheres is the linear chain $(-6,-2,-2)$ which connects $W_4$ to $W_1$. There is no such plumbing of length $4$ so there is no symplectic rational blow-down from $W_4$ to $W_0$.
		
		The classes in $H_2(W_1)$ have the form $\sum_{i=0}^{10}a_ie_i$ such that
		\[
		a_1=a_2=a_3=a_4=a_5=a_6=a_{10}, \qquad a_7=a_8=a_9=0, \qquad 2a_0+a_1+\dots+a_9=0.
		\]
		These are all integer multiples of $3e_0-e_1-\dots-e_6-e_{10}$ which has square $-16$, so there is no $-4$-sphere in $W_1$ to rationally blow down.

		\end{proof}
		
		\begin{remark}
		Similar computations as seen in the previous proof which analyze the homology classes represented by symplectic spheres in $V_6$ proves that there is no linear rational blow-down from $V_6$ to $V_0$ in the case of $\Ec$, thus slightly strengthening the statement of Proposition~\ref{p:322blowdowns}. Similarly, one can obstruct the existence of a linear rational blow-down from $W_9$ to $W_0$. Such computations show that these non-linear rational blow-downs are genuinely new symplectic cut-and-paste operations, rather than just hidden reformulations of the prior known linear rational blow-downs.
		\end{remark}

\section{Bounds on self-intersection numbers of rational cuspidal curves}\label{ss:bounds}
		In the earlier sections, we discussed symplectic fillings of contact manifolds $(Y_C,\xi_C)$ arising on the concave boundary of a neighborhood of a rational cuspidal curve $C$ whose algebraic genus and self-intersection number are determined by a degree which would allow the curve $C$ to symplectically embed in $\cptwo$. In this section and the next, we study symplectic fillings for more general contact manifolds $(Y_C,\xi_C)$ where self-intersection need not be $d^2$ and the arithmetic genus need not be a triangular number $\frac12(d-1)(d-2)$.
		
		$C$ will be a singular symplectic curve with positive self-intersection number (and thus admitting a concave neighborhood). Recall that $(Y_C,\xi_C)$ depends only on the singularity types, geometric genus, and self-intersection number of $C$. In this section, we will fix the singularity types for a rational (geometric genus equal to zero) curve, and vary the self-intersection number $s$. The goal is to prove that for certain values of $s$ (depending on the singularities we fixed), $(Y_C,\xi_C)$ is not symplectically fillable.
		
		We begin with an easy, quite general remark.
			
		\begin{lemma}\label{l:high-four}
			Let $C$ and $C'$ be singular curves with the same geometric genus and configuration of singularities (not necessarily cuspidal), and $0<C'\cdot C' \le C\cdot C$. If $(Y_C,\xi_C)$ is strongly symplectically fillable, then $(Y_{C'},\xi_{C'})$ is strongly symplectically fillable as well.
		\end{lemma}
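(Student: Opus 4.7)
The plan is to reduce the statement about fillings to a statement about embeddings, and then use symplectic blow-up at smooth points to decrease the self-intersection number without changing the geometric genus or singularity configuration.

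Concretely, suppose $W$ is a strong symplectic filling of $(Y_C,\xi_C)$. Glue $W$ to the standard concave neighborhood $N_C$ of $C$ along their common contact boundary to obtain a closed symplectic $4$-manifold $X = W \cup N_C$ containing a symplectic copy of $C$ (this uses the existence of the concave neighborhood from \cite[Theorem 2.13]{bigGS}). Since $C$ has only finitely many (isolated) singularities, it has infinitely many smooth points, so we can pick $k := C\cdot C - C'\cdot C' \ge 0$ smooth points of $C$ and symplectically blow up $X$ at these points. Calling $\widetilde{C}\subset X\#k\overline{\C\P}\,\!^2$ the proper transform, we have that $\widetilde{C}$ is a symplectic curve with the same singularities and geometric genus as $C$, and with self-intersection $\widetilde C\cdot \widetilde C = C\cdot C - k = C'\cdot C' > 0$.

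Since $\widetilde C\cdot \widetilde C > 0$, the curve $\widetilde C$ admits a standard concave neighborhood $N_{\widetilde C}$, and the complement
\[
W' := \left(X\#k\overline{\C\P}\,\!^2\right) \setminus \operatorname{int}(N_{\widetilde C})
\]
is a strong symplectic filling of its contact boundary $(Y_{\widetilde C},\xi_{\widetilde C})$. Since $(Y_\cdot,\xi_\cdot)$ depends only on the topological type of the singularities, the geometric genus, and the self-intersection number, and these all coincide for $\widetilde C$ and $C'$, we have $(Y_{\widetilde C},\xi_{\widetilde C}) = (Y_{C'},\xi_{C'})$. Thus $W'$ is the desired strong symplectic filling of $(Y_{C'},\xi_{C'})$.

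The only subtle point is to ensure that one may symplectically blow up at a smooth point of $C$ in such a way that the proper transform is again symplectic with self-intersection reduced by one; but this is the standard local model for the symplectic blow-up at a point lying on a symplectic submanifold. No obstacle arises from the condition $C'\cdot C' > 0$: it is precisely what guarantees that $\widetilde C$ still has a concave neighborhood, so that the cut-and-paste produces a filling rather than merely a compact symplectic cobordism.
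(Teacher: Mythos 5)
Your proof is correct and follows exactly the same route as the paper: glue the filling to the concave neighborhood, blow up at $C\cdot C - C'\cdot C'$ smooth points of $C$, and take the complement of a concave neighborhood of the proper transform. The paper's argument is essentially identical, including the invocation of \cite[Theorem 2.13]{bigGS} for the existence of the concave neighborhood once the self-intersection is positive.
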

			
		\begin{proof}
			Suppose $(Y_C,\xi_C)$ has a strong symplectic filling $W$. Let $N_C$ denote a standard concave neighborhood of $C$, and let $(X,\omega)$ denote the closed symplectic manifold which results from symplectically gluing $N_C$ to $W$ along their common contact boundary. Symplectically blow up $X$ at $n:=C\cdot C- C'\cdot C'$ smooth points of $C\subset X$. The proper transform of $C$ yields a symplectic embedding of $C'$ in $X\# n\cptwobar$. Since $C'\cdot C'>0$, there exists a concave neighborhood of $C'$~\cite[Theorem 2.13]{bigGS} in $X\#n\cptwobar$ whose contact boundary is $(Y_{C'},\xi_{C'})$. The complement of this neighborhood is a strong symplectic filling of $(Y_{C'},\xi_{C'})$.
		\end{proof}
			
		Recall the definitions of the multiplicity sequence and $\delta(p)$ from Section~\ref{s:background}, and that,
		for a cuspidal point $p$, $2\delta(p)$ is the sum of $m(m-1)$ over all elements $m$ in the multiplicity sequence for $p$. Let $M(p)$ sum of all squares in the multiplicity sequence. Let $\ell(p)$ denote the last (and smallest) entry in the multiplicity sequence. 
		(Note that in our convention, $\ell(p) > 1$ because we define the multiplicity sequence using the minimal resolution rather than the normal crossing resolution.)
		Note that $M(p)$ (respectively, $M(p) + \ell(p)$) is the defect in self-intersection when taking the minimal smooth (resp. normal crossing) resolution of $p$.
		For instance, when the singularity at $p$ is of type $T_{m,km+1}$, the multiplicity sequence is $[m^{[k]}]$, so that $M(p) = km^2$, $\ell(p) = m$, and $\delta(p) = \frac12 km(m-1)$.
						
		\begin{prop}\label{p:high-five}
			Suppose that $C$ is a rational curve with cusp singularities $p_1, \dots, p_\mu$ ($\mu \ge 1$), and with reducible singularities $q_1,\dots,q_\nu$ ($\nu \ge 0)$, and satisfying $C\cdot C \ge \sum M(p_i) + \sum M(q_j) + 2\min \ell(p_i) + 2$.
			Then $(Y_C,\xi_C)$ is not strongly symplectically fillable.
			If additionally $p_1,\dots, p_\mu$ are the only singularities of $C$ (i.e. $C$ is cuspidal), then $\xi_C$ is not even weakly symplectically fillable.
		\end{prop}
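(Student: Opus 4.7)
My plan is to argue by contradiction: a hypothetical symplectic filling of $(Y_C,\xi_C)$, capped off by the standard concave neighborhood $N_C$, yields a symplectic embedding of $C$ into a closed symplectic $4$-manifold $(X,\omega)$; I will then simplify $C$ via blow-ups until the resulting configuration can be ruled out by a classification argument.

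First, by Lemma~\ref{l:high-four}, non-fillability at the threshold value $n_0 := \sum M(p_i) + \sum M(q_j) + 2\min_i\ell(p_i) + 2$ propagates (via the contrapositive) to every larger self-intersection, so it suffices to treat the case $C\cdot C = n_0$. Suppose $(Y_C,\xi_C)$ admits a symplectic filling $W$ (weak, in the purely cuspidal case; strong, in general). Gluing $W$ to $N_C$ along the common contact boundary yields a closed symplectic $4$-manifold $(X,\omega)$ containing $C$ as a symplectic curve: in the cuspidal case, $Y_C$ is a rational homology $3$-sphere, so any weak filling can be deformed to a strong one near the boundary and the gluing is standard; in the reducible case, the strong-fillability hypothesis suffices directly.

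Next, pick a cusp $p_{i_0}$ achieving $\ell := \ell(p_{i_0}) = \min_i\ell(p_i)$, with multiplicity sequence $[m_1,\dots,m_r]$ and $m_r = \ell$. Perform symplectic blow-ups on $(X,\omega)$ that (i) minimally resolve every cusp $p_i$ with $i\neq i_0$, (ii) minimally resolve every reducible singularity $q_j$, and (iii) carry out only the first $r-1$ blow-ups in the minimal resolution of $p_{i_0}$. The proper transform $C'$ retains a single remaining singular point, of type $(\ell,\ell+1)$, and a direct computation of self-intersection under blow-up gives
\[
C'\cdot C' \;=\; n_0 - \sum_{i\neq i_0} M(p_i) - \bigl(M(p_{i_0}) - \ell^2\bigr) - \sum_j M(q_j) \;=\; (\ell+1)^2 + 1.
\]
In the purely cuspidal case, $C'$ is thus a rational symplectic unicuspidal curve whose singularity data matches the $\Ac_\ell$ family but whose self-intersection exceeds the $\Ac_\ell$-value $(\ell+1)^2$ by exactly one; in the reducible case the resolutions of the $q_j$'s may disconnect $C'$ into several smooth rational components attached to a unicuspidal curve of self-intersection $(\ell+1)^2+1$.

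The final step, and the main obstacle, is to rule out the existence of such a $C'$ inside any closed symplectic $4$-manifold. I would resolve the remaining $(\ell,\ell+1)$ cusp to obtain a smooth symplectic rational sphere $\widetilde{C'}$ of self-intersection $2\ell+2$, tangent to order $\ell$ to a $(-1)$-sphere, and then apply McDuff's Theorem~\ref{thm:mcduff} (after additional blow-ups at smooth points of $\widetilde{C'}$ to bring its self-intersection down to $+1$) to identify the ambient manifold with $\cptwo\#N\cptwobar$ and $[\widetilde{C'}] = h$. An enumeration of the possible homology classes of the exceptional spheres appearing in the resolution of $p_{i_0}$, using Lemmas~\ref{l:hom}, \ref{l:consecutive}, \ref{l:pos}, \ref{l:2chain}, together with the adjunction constraints, should be incompatible with the extra unit of self-intersection carried by $C'$; this is expected to mirror the $\Ac_\ell$ analysis in \cite[Theorem~6.5]{bigGS}, possibly augmented by an auxiliary symplectic tangent sphere at the remaining cusp to absorb the off-by-one discrepancy. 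The presence of reducible singularities adds only bookkeeping (the extra smooth rational components attached to $C'$ intersect $\cpone$ non-negatively and are controlled by Lemma~\ref{l:blowdown}) and explains why the conclusion weakens to strong fillability in that case.
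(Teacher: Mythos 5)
Your setup and reduction are sound and partially mirror the paper: the cap-off of a hypothetical filling by the concave neighborhood, the use of Lemma~\ref{l:high-four} to reduce to the threshold self-intersection, and the weak-to-strong deformation in the cuspidal rational-homology-sphere case all match the paper's proof. Your partial resolution at $p_{i_0}$ (stopping one blow-up short of the minimal resolution) is a legitimate variant: the remaining cusp has multiplicity sequence $[\ell]$, hence is of type $(\ell,\ell+1)$ since the multiplicity sequence determines the equisingularity type of a branch, and your arithmetic $C'\cdot C' = (\ell+1)^2+1$ is correct. (One small slip: the proper transform of an irreducible curve never disconnects, so resolving the $q_j$ again yields an irreducible unicuspidal curve, not components ``attached to'' one; this is harmless but reflects a confusion.)

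The genuine gap is the final step, which is where all of the content of the proposition lives: you never actually rule out a rational unicuspidal symplectic curve with an $(\ell,\ell+1)$ cusp and self-intersection $(\ell+1)^2+1$; you only assert that a homology enumeration ``should be incompatible.'' Moreover, the route you sketch is unlikely to close the gap: blowing up at \emph{generic} smooth points of $\widetilde{C'}$ produces a line $h$, a $(-1)$-sphere tangent to it of class $\ell h + \cdots$, and $2\ell+1$ disjoint transverse $(-1)$-spheres of classes $h-e_a-e_b$, and for this configuration the lemmas have little to grip---the tangency carries no homological information beyond the intersection number $\ell$, and for $\ell \ge 3$ the tangent sphere's class admits several adjunction-compatible forms under Lemma~\ref{l:hom}, so no contradiction falls out of Lemmas~\ref{l:consecutive}--\ref{l:2chain} directly. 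The paper's proof hinges on a specific \emph{non-generic} blow-up scheme that you are missing: after the minimal resolution, blow up $\ell$ more times at the order-$\ell$ tangency (reaching the normal crossing resolution at $p_1$), and then $\ell+1$ further times, always at the intersection of the proper transform of $C$ with the newest exceptional divisor. This manufactures the configuration~\eqref{e:high-five}: a $+1$-sphere, an adjacent $(-1)$-sphere, a chain of $2\ell$ $(-2)$-spheres, and a $(-\ell-1)$-sphere attached to the central vertex. The rigidity of long $(-2)$-chains (Lemma~\ref{l:2chainfix}) pins the chain classes to $e_1-e_2,\dots,e_{2\ell}-e_{2\ell+1}$; disjointness from the two half-chains forces the coefficients of $e_1,\dots,e_{\ell+1}$ in the $(-\ell-1)$-sphere's class to be all equal, and Lemma~\ref{l:pos} rules out the positive option at the central vertex, so all of $e_1,\dots,e_{\ell+1}$ appear with coefficient $-1$ and the square is at most $-\ell-2$, a contradiction. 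Your fallback suggestion of ``an auxiliary symplectic tangent sphere at the remaining cusp'' is not something you can produce on demand, so without the paper's targeted blow-up device (or an equally concrete substitute) the proof is incomplete exactly at its crucial point.
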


		\begin{proof}
			Without loss of generality, suppose that $p_1$ is a singular cusp with $\ell(p_1) = \ell = \min \ell(p_i)$.
			By Lemma~\ref{l:high-four} above, it suffices to prove the statement for $C\cdot C = \sum M(p_i) + \sum M(q_i) + 2\ell + 2$.
				
			Let $(W,\omega_W)$ be a strong symplectic filling of $(Y_C,\xi_C)$, and $(X',\omega')$ be the closed symplectic manifold obtained by gluing a standard concave neighborhood of $C$ to $(W,\omega_W)$.
			We now view $C$ as a symplectic curve in $X'$.
				
			Blow up $C$ at all its singular points until we obtain the minimal smooth resolution of $C$, i.e. the proper transform $\widetilde{C}$ of $C$ is a smooth sphere of self-intersection $\widetilde{C}\cdot\widetilde{C} = C\cdot C - \sum M(p_i) - \sum M(q_j) = 2\ell + 2$.
			Note that, by definition, at $p_1$ the last blow-up creates an exceptional divisor that intersects $\widetilde{C}$ at a single point with multiplicity $\ell$.
				
			Blow up $\ell$ more times at $p_1$, so that at $p_1$ we get to the normal crossing resolution of the singularity $(C,p_1)$, and then $\ell+1$ more times at the intersection between (the proper transform of) $C$ and the last exceptional divisor.
			Let $(X,\omega)$ denote the corresponding blow-up of $(X',\omega')$.
				
			The 4-manifold $X$ contains the following configuration of symplectic spheres: a $+1$-sphere $C'$, the proper transform of $C$, an exceptional divisor intersecting $C$ transversely once, and a string of $-2$-curves departing from it; this is depicted in~\eqref{e:high-five}.
			\begin{equation}\label{e:high-five}
			\xygraph{
					!{<0cm,0cm>;<0.9cm,0cm>:<0cm,0.9cm>::}
					!~-{@{-}@[|(2.5)]}
					!{(0,0.8) }*+{\bullet}="vl"
					!{(1.5,0.8) }*+{\dots}="vd"
					!{(3,0.8) }*+{\bullet}="vf"
					!{(4.5,0.8) }*+{\bullet}="c"
					!{(6,0.8) }*+{\bullet}="wf"
					!{(4.5,-0.7) }*+{\bullet}="b"
					!{(7.5,0.8) }*+{\dots}="wd"
					!{(9,0.8) }*+{\bullet}="wl"
					!{(10.5,0.8) }*+{\bullet}="e"
					!{(12,0.8) }*+{\bullet}="h"
					!{(0,1.2) }*+{-2}
					!{(4.5,-1.1) }*+{-\ell-1}
					!{(3,1.2) }*+{-2}
					!{(4.5,1.2) }*+{-2}
					!{(6,1.2) }*+{-2}
					!{(9,1.2) }*+{-2}
					!{(10.5,1.2) }*+{-1}
					!{(12,1.2) }*+{+1}
					"vl"-"vd"
					"vf"-"vd"
					"wl"-"wd"
					"wf"-"wd"
					"vf"-"c"
					"wf"-"c"
					"c"-"b"
					"wl"-"e"
					"e"-"h"
				}
			\end{equation}
			The left leg contains exactly $\ell-1$ vertices of weight $-2$, while the right leg contains $\ell$ of them. In total, the chain of $-2$-vertices has length $2\ell$.
				
			By Theorem~\ref{thm:mcduff}, $X$ is symplectomorphic to a blow-up of $\cptwo$, and $C'$ can be identified with a line in $\cptwo$. Using the standard basis for $H_2(\cptwo\#N\cptwobar)$, by Lemma~\ref{l:hom} the $-1$-sphere adjacent to $C'$ is in the homology class $h-e_0-e_1$.
			Since the chain of $-2$s is of length $2\ell > 2$, by Lemma~\ref{l:2chainfix}, up to relabeling the $e_i$, the homology classes in the chain are $e_1-e_2, \dots, e_{2\ell}-e_{2\ell+1}$.
			The central vertex in the plumbing is in the homology class $e_{\ell+1}-e_{\ell+2}$.
			
			The $(-\ell-1)$-sphere in~\eqref{e:high-five} is disjoint from the line $C'$, so by Lemma~\ref{l:hom} its homology class is of the form $e_i - e_{j_1} - \cdots - e_{j_\ell}$ for some $i, j_1, \dots, j_\ell$.
			It is also disjoint from the two halves of the chain of $-2$s,
			so the coefficients appearing in the homology class of the $(-\ell-1)$-sphere of $e_1,\dots,e_{\ell+1}$ are all equal---namely they are either all $0$ or all $-1$.
			Since the $(-\ell-1)$-sphere intersects the class $e_{\ell+1}-e_{\ell+2}$ once positively, either $i=\ell+2$ or $j_k=\ell+1$ for some $k$. The former possibility is ruled out by Lemma~\ref{l:pos}, therefore $e_1,\dots, e_{\ell+1}$ must all appear with coefficient $-1$ in the homology class for the $(-\ell-1)$-sphere. However, the self-intersection of this class is at most $-\ell-2$, a contradiction.
			This proves the first assertion.

			Note that when $C$ is a rational cuspidal curve, i.e. if $\nu = 0$, the 3-manifold $Y_C$, that is the boundary of a regular neighborhood of $C$, is a rational homology sphere; in particular, since every weak symplectic filling of $\xi_C$ can be deformed to a strong symplectic filling~\cite[Lemma~1.1]{OhtaOno2}, the last assertion follows.
		\end{proof}
			
		In fact we note here that for unicuspidal curves with a singularity of type $(\ell,\ell+1)$ the maximal self-intersection allowed by the proposition is $\ell^2 + 2\ell + 1 = (\ell+1)^2$, which is exactly the degree of the rational cuspidal curve $\{x^{\ell+1}-y^\ell z=0\}$ of type $\Ac_\ell$. This shows that the inequality is sharp.
			
		We give a small refinement of the proposition above for curves with more than one cusp.
			
		\begin{prop}\label{p:high-six}
			Suppose that $C$ is a rational curve with singularities $p_1, \dots, p_\mu$ ($\mu > 1$) such that $\ell(p_1) = \ell(p_2) = \min\ell(p_i)$, with reducible singularities $q_1,\dots,q_\nu$ ($\nu \ge 0)$, and self-intersection $C\cdot C$ at least $\sum M(p_i) + \sum M(q_j) + 2\min \ell(p_i) + 1$.
			Then the associated cuspidal contact structure $\xi_C$ is not strongly symplectically fillable.
			If additionally $p_1,\dots, p_\mu$ are the only singularities of $C$, then $\xi_C$ is not even weakly symplectically fillable.
		\end{prop}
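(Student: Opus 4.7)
\noindent\emph{Proof proposal.} The plan is to mirror the strategy of Proposition~\ref{p:high-five}, using the second minimum-$\ell$ cusp $p_2$ to absorb the one unit of self-intersection that is no longer available. By Lemma~\ref{l:high-four} I may assume the equality case $C\cdot C = \sum M(p_i) + \sum M(q_j) + 2\ell + 1$, where $\ell := \ell(p_1) = \ell(p_2) = \min \ell(p_i)$. Given a hypothetical strong symplectic filling, I glue it to a standard concave neighborhood of $C$ to form a closed symplectic $(X', \omega')$ containing $C$, and take the minimal smooth resolution, so that the proper transform $\widetilde C$ is a smooth symplectic sphere of self-intersection $2\ell + 1$. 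The crucial modification of the earlier proof is that I now perform the normal-crossing resolution at \emph{both} $p_1$ and $p_2$, consuming exactly $\ell + \ell = 2\ell$ blow-ups and bringing $\widetilde C$ down to a $+1$-sphere.

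The resulting configuration attaches $\widetilde C$ to two disjoint ``Y-branches,'' one per cusp: the central vertex of the $i$th branch is a symplectic $(-1)$-sphere $F_\ell^{(i)}$ meeting $\widetilde C$ transversely, and the other two legs are a chain of $\ell - 1$ symplectic $(-2)$-spheres $F_{\ell-1}^{(i)}, \dots, F_1^{(i)}$ introduced by the tangency resolution, together with a symplectic sphere $E^{(i)}$ of self-intersection at most $-\ell-1$ (which for a $T_{\ell,\ell+1}$ cusp is exactly the $(-\ell-1)$-sphere appearing in the previous proof). Theorem~\ref{thm:mcduff} then identifies the ambient manifold with $\cptwo\#N\cptwobar$ and $\widetilde C$ with a line $h$. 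By Lemma~\ref{l:hom}, $[F_\ell^{(i)}] = h - e_{a_i} - e_{b_i}$, and the disjointness of $F_\ell^{(1)}$ and $F_\ell^{(2)}$ forces $|\{a_1, b_1\} \cap \{a_2, b_2\}| = 1$; after relabeling I take $a_1 = a_2 = a$, $b_1 = b$, $b_2 = d$ with $a,b,d$ pairwise distinct. Lemma~\ref{l:2chainfix} fixes the homology classes along each $(-2)$-chain up to its option (A)/(B) dichotomy, and Lemma~\ref{l:hom}(2) together with the intersection condition $E^{(i)}\cdot F_\ell^{(i)} = 1$ writes $[E^{(i)}] = e_{p^{(i)}} - \sum_j e_{q_j^{(i)}}$ with the single $+1$-index $p^{(i)} \in \{a_i, b_i\}$.

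The last step is a short case analysis using Lemma~\ref{l:pos}. Writing $[F_{\ell-1}^{(i)}] = e_{u_1^{(i)}} - e_{u_2^{(i)}}$ in option (A) with $u_1^{(i)} \in \{a_i, b_i\}$, Lemma~\ref{l:pos} applied to the pair $(F_{\ell-1}^{(i)}, E^{(i)})$ forces $p^{(i)}$ to be the element of $\{a_i, b_i\}$ distinct from $u_1^{(i)}$. Tracking the two possibilities for $u_1^{(1)}$ and $u_1^{(2)}$, each of the four subcases produces a pair of spheres among $\{F_{\ell-1}^{(1)}, F_{\ell-1}^{(2)}, E^{(1)}, E^{(2)}\}$---all disjoint from $\widetilde C$---whose unique $+1$-index is the shared class $e_a$, violating Lemma~\ref{l:pos}. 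Option (B) of Lemma~\ref{l:2chainfix} can only occur when $\ell = 3$, and is ruled out immediately since then $F_{\ell-1}^{(i)}$ and $F_1^{(i)}$ would collectively use both $e_{a_i}$ and $e_{b_i}$ as $+1$-indices, leaving no admissible choice for $p^{(i)}$; the case $\ell = 2$ is a degenerate version of the same argument, with a single $(-2)$-sphere in each chain. The last assertion of the proposition, about weak fillability when $C$ is cuspidal, follows from~\cite[Lemma~1.1]{OhtaOno2} exactly as in Proposition~\ref{p:high-five}. The step I expect to need the most care is verifying that the normal-crossing resolution at a general (not necessarily $T_{\ell,\ell+1}$) cusp produces the Y-shaped configuration described above; fortunately, the chain of minimal-resolution exceptional divisors hanging off $E^{(i)}$ never enters the contradiction, so the argument is insensitive to the finer structure of the cusp type beyond the value of $\ell(p_i)$.
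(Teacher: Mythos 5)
Your proposal is correct and follows essentially the same route as the paper: reduce to the equality case via Lemma~\ref{l:high-four}, cap off the filling, pass to the normal crossing resolution at both minimal-$\ell$ cusps so the proper transform becomes a $+1$-sphere, apply Theorem~\ref{thm:mcduff}, and derive a contradiction from Lemma~\ref{l:pos} because the two $(-1)$-spheres $h-e_{a_i}-e_{b_i}$ must share exactly one index, forcing two disjoint spheres (one per branch) to carry the same $+1$-coefficient. The only cosmetic difference is that you route the $(-2)$-leg through Lemma~\ref{l:2chainfix} and then dispose of its option~(B), whereas the paper applies Lemma~\ref{l:hom} and Lemma~\ref{l:pos} directly to the single $(-2)$-sphere adjacent to each $(-1)$-vertex, which sidesteps that case analysis.
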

			
		\begin{proof}
			Let $\ell = \ell(p_1) = \ell(p_2)$. Again, by Lemma~\ref{l:high-four}, we can assume that $s(C) = \sum M(p_i) + 2\min \ell(p_i) + 1$.
			As above, suppose that there is a filling $(W,\omega_W)$ of $(Y_C,\xi_C)$ and glue it to a standard concave neighborhood of $C$ to obtain the closed manifold $(X',\omega')$.
			Blow up $\ell$ times at each of the points $p_1$ and $p_2$ to get to the normal crossing divisor resolution at $p_1$ and $p_2$, in the blown-up manifold $(X,\omega)$.
			By assumption, the proper transform of $C$ is a symplectic $+1$-sphere, and by Theorem~\ref{thm:mcduff} we can identify it with a line in a blow-up of $\cptwo$.
			However, in $X$ we see the following configuration.
			\begin{equation}\label{e:high-six}
				\xygraph{
					!{<0cm,0cm>;<0.9cm,0cm>:<0cm,0.9cm>::}
					!~-{@{-}@[|(2.5)]}
					!{(3,-0.2) }*+{\bullet}="c"
					!{(1.5,-0.2) }*+{\bullet}="el"
					!{(0.2,0.55) }*+{\bullet}="2l"
					!{(0.2,-0.95) }*+{\bullet}="3l"
					!{(4.5,-0.2) }*+{\bullet}="er"
					!{(5.8,0.55) }*+{\bullet}="2r"
					!{(5.8,-0.95) }*+{\bullet}="3r"
					!{(3,0.2) }*+{+1}
					!{(1.5,0.2) }*+{-1}
					!{(4.5,0.2) }*+{-1}
					!{(0.2,0.95) }*+{-\ell-1}
					!{(0.2,-0.55) }*+{-2}
					!{(5.8,0.95) }*+{-\ell-1}
					!{(5.8,-0.55) }*+{-2}
					"c"-"el"
					"c"-"er"
					"el"-"2l"
					"el"-"3l"
					"er"-"2r"
					"er"-"3r"
				}
				\quad
				=
				\qquad
				\xygraph{
					!{<0cm,0cm>;<0.9cm,0cm>:<0cm,0.9cm>::}
					!~-{@{-}@[|(2.5)]}
					!{(2.8,-0.2) }*+{\bullet}="c"
					!{(1.3,-0.2) }*+{\bullet}="el"
					!{(0,0.55) }*+{\bullet}="2l"
					!{(0,-0.95) }*+{\bullet}="3l"
					!{(4.3,-0.2) }*+{\bullet}="er"
					!{(5.6,0.55) }*+{\bullet}="2r"
					!{(5.6,-0.95) }*+{\bullet}="3r"
					!{(2.8,0.2) }*+{\vphantom{e'}h}
					!{(1.3,0.2) }*+{\vphantom{e'}e}
					!{(4.3,0.2) }*+{e'}
					!{(0,0.95) }*+{\vphantom{e'}v}
					!{(0,-0.55) }*+{\vphantom{e'}w}
					!{(5.6,0.95) }*+{v'}
					!{(5.6,-0.55) }*+{w'}
					"c"-"el"
					"c"-"er"
					"el"-"2l"
					"el"-"3l"
					"er"-"2r"
					"er"-"3r"
				}
			\end{equation}
				
			By Lemma~\ref{l:hom} and~\ref{l:pos}, the vertices $e$, $v$, and $w$ must be in the homology classes $h-e_1-e_2$, $e_2-e_3-\dots-e_{\ell+2}$, and $e_1-e_0$, respectively.
			The class $e'$ can either be in the homology class $h-e_1-e_j$ or in the homology class $h-e_2-e_j$, for some $j>2$.
			In either case, we get a contradiction, because neither of the classes of $v'$ or $w'$ can be in a class $e_2-\sum e_k$ or $e_1-\sum e_k$, by Lemma~\ref{l:pos}.
				
			This proves the first assertion.
			The second assertion follows verbatim as in the case of Proposition~\ref{p:high-five}.
		\end{proof}

	The two propositions we have just proven give an upper bound for the self-intersection of a curve $C$ with given singularities to exists in some closed symplectic 4-manifold.
	In particular, combining either of them with Lemma~\ref{l:high-four}, we obtain that, if we fix the singularity types of a rational curve $C$, the set of integers $s$ such that $s = C\cdot C$ for some $C$ in a closed symplectic 4-manifold with those singularities is either empty or an interval $(-\infty, s_0]$.

	In the rational and unicuspidal case, 	the existence of the Puiseux expansion shows that every singularity type is realized as one singularity of a rational plane curve, say $C_0$. Blowing up all the other singularities of $C_0$ (except for the desired one), one obtains a rational curve $C_1$ in a blow-up of $\cptwo$ that has one singularity of prescribed type. This shows in particular that, in the unicuspidal case, we always have an interval $(-\infty,s_0]$ of realized self-intersections, and Proposition~\ref{p:high-five} gives an upper bound on $s_0$.
	
	Giving explicit lower bounds on $s_0$, however, is less easy.
	In the case of singularities of type $(p,q)$, we can find the bound $s_0 > pq$.
	Indeed, the curve $\{x^pz^{q-p} - y^q=0\}$ has degree $q$, and two singularities: one of type $(p,q)$ at $(0:0:1)$, and the other of type $(q-p,q)$ at $(1:0:0)$.
	Taking the minimal smooth resolution of the latter, we obtain a curve $C$ of self-intersection strictly larger than $q^2-(q-p)q = pq$ (see~\cite[Lemma~2.4]{bigGS}), whose unique singularity is of type $(p,q)$.
	Note that the self-intersection of the proper transform of $C$ in the minimal smooth resolution of its only singularity, too, is positive (again, by~\cite[Lemma~2.4]{bigGS}).
			
		\begin{remark}
			A version of the two obstructions above was already known in the algebro-geometric context;
			indeed, Hartshorne proved in~\cite[Theorem 4.1]{Hartshorne} that if an algebraic surface $X$ contains a smooth genus-$g$ curve $C'$ of self-intersection at least $4g+6$, then $X$ is ruled, and $C'$ is a section. The analogue result in the symplectic context has been proven by K\"utle~\cite{Fabien-thesis}.
			For instance, this applies to the case of a rational curve $C$ whose unique singularity is of type $(2,3)$, and whose self-intersection is at least 10.
			To see this, observe that smoothing the singularity yields a genus-1 curve $C'$ satisfying the requirements of Hartshorne's theorem, so $C'$ is a section of a ruled surface $X \to T^2$. Blow up once at the cusp of $C$ and consider the map from the proper transform of $C$ to $T^2$ obtained by composing the blow-down with the projection. This is a degree-1 map from a sphere to $T^2$, which gives a contradiction.
			This specific case was also known in the symplectic context, by work of Ohta and Ono~\cite{OhtaOno}.

			In general, the bound obtained by applying Hartshorne's or K\"utle's result is weaker than the one obtained by applying Proposition~\ref{p:high-five}.
			This can be seen in the following family of examples: look at rational curves with only one singularity at $p$, which is of type $(F_k, F_{k+1})$. (As usual, $F_k$ is the $k^{\rm th}$ Fibonacci number.) On the one hand, we have $4p_g(C) + 5 = 2(F_{k}-1)(F_{k+1}-1) + 5$, so Hartshorne's bound guarantees that there are no such curves as soon as the self-intersection $s$ satisfies:
			\[
			s \ge 2F_{k}F_{k+1} - 2F_{k} - 2F_{k+1} + 7.
			\]
			On the other hand, the recursive definition of the Fibonacci numbers gives $\ell(p) = F_2 = 2$, so our bound implies that there can be no such rational curve when
			\[
			s \ge M(p) + 2\cdot 2 + 2 = \sum_{j = 2}^k F_h^2 + 6 = F_kF_{k+1} + 4,
			\]
			which is much smaller than the previous bound if $k$ is large. (Here we have used the remarkable identity $\sum_{j=0}^k F_j^2 = F_kF_{k+1}$.)

		\end{remark}

\section{Rational cuspidal curves of low arithmetic genus}
{\label{s:lowgenus}
In this section we look at symplectic filling classifications for contact manifolds $(Y_C,\xi_C)$ where $C$ is a rational cuspidal curve with low complexity, where we take arithmetic genus as our measure of complexity.

Recall from Section~\ref{s:background} that the arithmetic genus of a rational curve is determined by the multiplicity sequences of its singularities; each entry $m$ in the sequence(s) contributes $\frac12 m(m-1)$ to the genus. If we restrict to low genus cases, this significantly restricts the types of singularities that may arise.
We can vary the self-intersection number of the curve, $s$, freely (the curve will admit a concave neighborhood if $s>0$), however we can only utilize techniques from~\cite{bigGS} to classify fillings if $s$ is sufficiently large to ensure that the proper transform of the curve in the minimal smooth resolution has positive self-intersection. Using the notation from Section~\ref{ss:bounds}, this means that we need $s>\sum_p M(p)$, where the sum is taken over all singular points $p$. Note that when the self-intersection gets sufficiently large so that it exceeds the bounds of Proposition~\ref{p:high-five} or~\ref{p:high-six}, the corresponding contact manifold is not fillable.

In this section, we consider all possible rational cuspidal curves $C$ whose algebraic genus is at most three, with self intersection large enough so that the proper transform of the curve in the minimal resolution has strictly positive self-intersection.	For each such curve, we classify the minimal symplectic fillings of the corresponding contact manifold $(Y_C,\xi_C)$.

The reader is encouraged to review techniques and results from~\cite{bigGS}, as we will use them frequently in this section.

\subsection{Genus 1}

A rational cuspidal curve of arithmetic genus $1$ can only have a single multiplicity sequence $[2]$ singularity, which corresponds to type $(2,3)$, i.e. a simple cusp.
Suppose $C$ is a rational cuspidal curve with a simple cusp as its unique singularity and self-intersection $s := C\cdot C$. For convenience, throughout this subsection we will denote the corresponding contact manifold by $(Y_s,\xi_s)$ (though of course the contact manifold depends on the singularities specified).

The minimal smooth resolution results from a single blow-up at the cusp. The total transform consists of an exceptional divisor $E$ of self-intersection $-1$ which is simply tangent to the smooth proper transform of $C$ which has self-intersection $s-4$. Therefore we constrain ourselves to the cases when $s\geq 5$.

\begin{prop}
	Let $(Y_s,\xi_s)$ be the contact boundary of a concave neighborhood of a rational cuspidal curve with a unique simple cusp, and self-intersection $s$.
	\begin{itemize}
		\item When $s=5,6,7,9$, $(Y_s,\xi_s)$ has a unique minimal symplectic filling $W$ and $b_2(W_s)=9-s$.
		\item When $s=8$, $(Y_s,\xi_s)$ has exactly two minimal symplectic fillings $W_A$ and $W_B$, and $b_2(W_A)=b_2(W_b)=1$.
		\item When $s\geq 10$, $(Y_s,\xi_s)$ has no symplectic fillings.
	\end{itemize}
\end{prop}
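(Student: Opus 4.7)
The case $s \geq 10$ is immediate from Proposition~\ref{p:high-five}: a simple cusp has multiplicity sequence $[2]$, so $M = 4$ and $\ell = 2$, whence any weakly fillable $(Y_C,\xi_C)$ requires $s \leq M + 2\ell + 1 = 9$.

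For $5 \leq s \leq 9$, my plan is to classify embeddings rather than fillings directly. Every minimal symplectic filling of $(Y_s,\xi_s)$ arises, by gluing the standard concave neighborhood of $C$, as the complement of a relatively minimal symplectic embedding of $C$ into a closed symplectic $4$-manifold, so it suffices to enumerate such embeddings up to symplectic isotopy. Following the strategy of~\cite[Section~6]{bigGS}, I would pass to the minimal normal crossing resolution of $C$---a star-shaped tree whose central vertex is a $-1$-sphere $E_3$ adjacent to three leaves $\widetilde{C}$, $E_1$, $E_2$ of self-intersections $s-6$, $-3$, $-2$ respectively---and enumerate its embeddings. After locating a symplectic $+1$-sphere in the ambient $4$-manifold (either $\widetilde{C}$ itself when $s = 7$, or else one obtained by a suitable birational modification of the configuration), Theorem~\ref{thm:mcduff} identifies the ambient closed symplectic manifold with a blow-up of $\cptwo$, and the homology classes of the remaining components are then constrained by Lemmas~\ref{l:hom}--\ref{l:2chainfix}.

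The baseline is $s=9$: the cuspidal cubic $\{y^2z = x^3\} \subset \cptwo$ provides an embedding, which the homological lemmas pin down as the unique one up to symplectic isotopy, giving a rational homology ball filling. For $s \in \{7,6,5\}$, blowing up the cuspidal cubic at $9-s$ smooth points yields an embedding of $C$ into $\cptwo\#(9-s)\cptwobar$; a homological argument parallel to those in~\cite[Section~6]{bigGS} shows this is the only embedding up to symplectic isotopy, producing a unique minimal filling of $b_2 = 9-s$.

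The main obstacle is the case $s=8$. Here two distinct embeddings exist: blowing up the cuspidal cubic at a smooth point produces an embedding into $\cptwo\#\cptwobar$, while a cuspidal bidegree-$(2,2)$ curve in $S^2\times S^2$ (which has arithmetic genus $1$ and self-intersection $8$, and whose existence can be verified by a direct local construction or by birational transformation of the cuspidal cubic) provides a second embedding. The two resulting fillings $W_A$ and $W_B$ both have $b_2=1$, but are distinguished by the divisibility of $c_1$ (equivalently, by the parity of the intersection form on the complement, whose generator is the orthogonal complement of the configuration classes in $H_2$ of the ambient manifold). The technical heart of the argument is the enumeration ruling out any further embedding in higher blow-ups of $\cptwo$; this requires carefully cycling through the possible coefficient vectors for $\widetilde{C}, E_1, E_2, E_3$ in the standard basis $h, e_1, \dots, e_N$, checking compatibility with the adjunction formula, the tangency pattern, and the sign restrictions of Lemma~\ref{l:hom}.
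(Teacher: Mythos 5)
Your overall strategy -- classify relatively minimal symplectic embeddings of $C$, resolve, locate a $+1$-sphere, apply Theorem~\ref{thm:mcduff}, and constrain homology classes via Lemmas~\ref{l:hom}--\ref{l:2chainfix} -- is the same as the paper's. But there are three concrete problems with your execution.

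First, your choice of resolution undermines the McDuff step. You pass to the normal crossing resolution, where $\widetilde{C}$ has self-intersection $s-6$; this is $+1$ only for $s=7$, is too large for $s=8,9$, and is \emph{non-positive} for $s=5,6$, where no amount of further blowing up can produce the needed $+1$-sphere from $\widetilde{C}$. Your ``suitable birational modification'' is exactly the missing content. The paper instead takes only the \emph{minimal} resolution (one blow-up, leaving $E$ a $-1$-sphere \emph{tangent} to $\widetilde{C}$, which then has square $s-4\ge 1$) and blows up $s-5$ additional times at a \emph{different} smooth point of $C$; this brings $\widetilde{C}$ to square $+1$ uniformly for all $5\le s\le 9$, makes $[E]=2h-e_1-\cdots-e_5$ by Lemma~\ref{l:hom}, and attaches a chain $h-e_1-e_2,\,e_2-e_3,\dots$ handled by Lemma~\ref{l:2chain}.

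Second, homological enumeration alone does not give uniqueness of the embedding up to symplectic isotopy, which is what the filling classification requires. After pinning down the homology classes, the paper uses Lemma~\ref{l:blowdown} to blow down exceptional spheres so that the configuration descends to a conic with a tangent line (and, for $s>5$, one further line) in $\cptwo$, and then invokes the isotopy uniqueness of that configuration from~\cite[Theorem~1.5]{bigGS} (together with~\cite[Proposition~5.1]{bigGS}-type arguments) to conclude that each homological embedding is realized by exactly one symplectic isotopy class. Your proposal asserts that ``the homological lemmas pin down [the embedding] as the unique one up to symplectic isotopy,'' which conflates the two steps; without the isotopy-uniqueness input the count of fillings is only an upper bound on homological types, not a classification. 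Third, a smaller point: for $s=8$ the two complements have intersection forms $\langle -8\rangle$ and $\langle -2\rangle$ (generated by $e_1-e_2-e_3-e_4+2e_5$ and $e_4-e_5$ respectively); both are \emph{even}, so ``parity of the intersection form'' does not distinguish them. The forms are nonetheless non-isomorphic (different determinants), so the paper's comparison of the generators' squares does the job; you should replace the parity claim by that.
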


\begin{proof}
	Since the multiplicity sequence for the unique singularity is $[2]$, following the notation from Section~\ref{ss:bounds}, we have $M(p)=4$ and $\ell(p)=2$. Therefore, by Proposition~\ref{p:high-five}, when $s\geq 10$, $(Y_s,\xi_s)$ has no symplectic fillings.
	
	For any minimal symplectic fillings $W$ of $(Y_s,\xi_s)$, we can glue $W$ to a concave neighborhood of $C$ to obtain a closed symplectic manifold $X$ such that the pair $(X,C)$ is relatively minimal. Conversely, for any relatively minimal symplectic embedding of $C$ into a closed symplectic manifold $X$, there is a concave neighborhood of $C$ whose complement is a minimal symplectic filling of $X$.
	Therefore, to analyze the symplectic fillings when $5\leq s\leq 9$, we classify relatively minimal symplectic embeddings of $C$ into a closed symplectic manifold $X$. If $C$ embeds in $X$, then we can blow-up $X$, once at the singular point $p$ to resolve the singularity of $C$, and $s-5$ times at another point $q$, so that the resulting proper transform $\tilde C$ has self-intersection $1$. See Figure~\ref{fig:genus1}.
	
	\begin{figure}
		\centering
		\includegraphics[scale=.6]{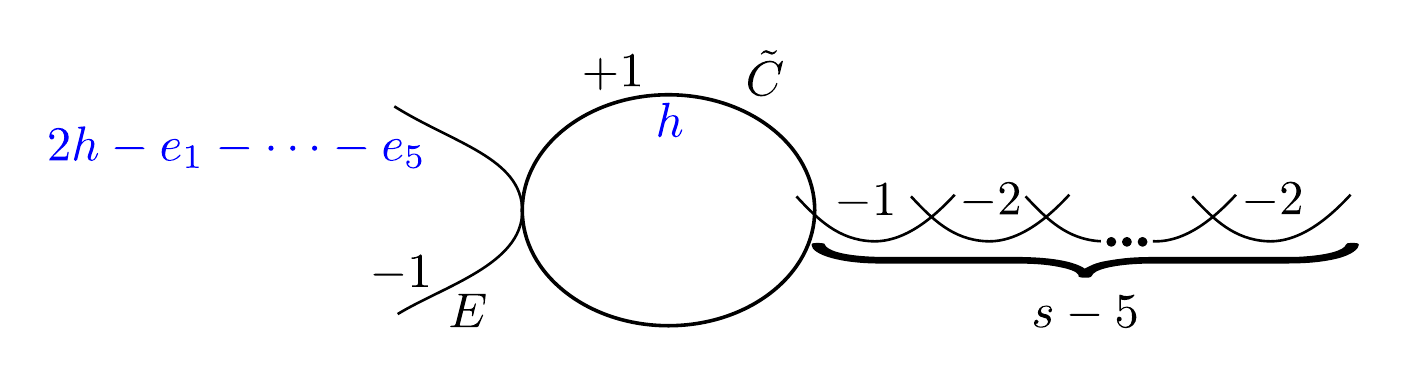}
		\caption{A blow-up of a curve $C$ with a single simple cusp and self-intersection $s$, so that the proper transform $\tilde{C}$ has self-intersection $+1$.}
		\label{fig:genus1}
	\end{figure}

	By Theorem~\ref{thm:mcduff}, there exists a symplectomorphism of $X\#(s-4)\cptwobar$ to $\cptwo\#N\cptwobar$ identifying the proper transform $\tilde C$, a smooth symplectic sphere of self-intersection $+1$ with $\cpone\subset\cptwo$.
	Using the standard basis $\{h,e_1,\cdots, e_N\}$ for $H_2(\cptwo\#N \cptwobar)$, $[C]=h$. We determine the possible homology classes for the other components using the lemmas from section~\ref{s:background}. By Lemma~\ref{l:hom}, the tangent exceptional sphere $E$ represents $2h-e_1-\cdots - e_5$. When $s>5$, there is a chain of $s-5$ additional exceptional divisors. The first sphere in the chain is a $-1$-sphere intersecting $C$ transversally and disjoint from $E$, and thus represents the class $h-e_1-e_2$ (up to relabeling). By Lemma~\ref{l:2chain}, the remaining spheres in the chain represent (a truncation of) $e_2-e_3$, $e_3-e_4$, $e_4-e_5$ or in the case that $s=8$ (so there are exactly two $-2$-spheres in the chain) they can represent $e_2-e_3$, $e_1-e_2$. By~\ref{l:blowdown}, we can blow down exceptional divisors representing $e_1,\dots, e_5$ which either intersect the configuration positively or are components of the image of the configuration after some blow-downs. After blowing down all exceptional classes, the image of the total transform of $C$ descends to a conic (the image of $E$) with one tangent line (the image of $\tilde C$), and, when $s>5$ another line (the image of the first exceptional sphere in the chain) in $\cptwo$. 
	
	By~\cite[Theorem~1.5]{bigGS}, such a configuration has a unique symplectic isotopy class in $\cptwo$. Therefore there is exactly one relatively minimal symplectic embedding (up to symplectic isotopy) of the total transform of $C$ into $X\#{(s-4)}\cptwobar \cong \cptwo\#5\cptwobar$ for each homological embedding. By deleting a concave regular neighborhood of the total transform of $C$, we get exactly one minimal symplectic filling of $(Y_s,\xi_s)$ for each homological embedding. When $s=5,6,7,9$, this shows that there is a unique minimal symplectic filling, corresponding to the unique relatively minimal symplectic embedding of $C$ into $\cptwo\#(5-(s-4))\cptwobar$. Since $(Y_s,\xi_s)$ is a rational homology sphere, the Mayer--Vietoris long exact sequence implies the complementary filling has $b_2=9-s$. By additivity of the signature, it is also negative definite.
	
	When $s=8$, there are two homological embeddings, (where the final $-2$-sphere in the chain can either represent $e_3-e_4$ or $e_1-e_2$). To see that the two corresponding fillings $W_A$ and $W_B$ are different, we look at their intersection forms.
	The torsion-free part of the former is generated by the class $e_1-e_2-e_3-e_4+2e_5$, while that of the latter is generated by the class $e_4-e_5$.
	In particular, the intersection forms of $W_A$ and $W_B$ are non-isomorphic, so $W_A$ and $W_B$ are non-diffeomorphic.
\end{proof}

\begin{remark}
	Note that in the $s=8$ case, the two homological embeddings of the total transform of $C$, correspond to two symplectic embeddings of $C$, one into $\cptwo\#\cptwobar$ and the other into $S^2\times S^2$.
	The former embedding is obtained by blowing up a cuspidal cubic in $\cptwo$ at a non-singular point. The latter is obtained by taking a cuspidal cubic $D$ and a line $L$ meeting $D$ transversely in three points, blowing up $\cptwo$ at two of them, and then contracting the proper transform of $L$.
	In our analyses, this will be a common source of multiple fillings of the same cuspidal contact manifold with the same Betti numbers.
\end{remark}
	
\subsection{Genus 2}
	For a rational cuspidal curve of algebraic genus $2$, there can either be two simple cusps (each with multiplicity sequence $[2]$), or a single cusp which is a cone on a $(2,5)$-torus knot (multiplicity sequence $[2,2]$). In both cases, the minimal resolution results from two blow-ups at points of multiplicity $2$, so we can classify symplectic fillings for such curves when the self-intersection number $s\geq 9$.
	
	\begin{prop}
		Let $C$ be a rational cuspidal curve with self-intersection number $s$, such that either it has exactly two simple cusps or it has a unique singularity of type $(2,5)$, and let $(Y_s,\xi_s)$ denote the corresponding cuspidal contact manifold.
		\begin{itemize}
			\item When $9\leq s \leq 11$, $(Y_s,\xi_s)$ has a unique minimal symplectic filling $W_s$, and $b_2(W_s)=13-s$.
			\item When $s=12$, $(Y_s,\xi_s)$ has exactly two minimal symplectic fillings, $W_A$ and $W_B$, and $b_2(W_A)=b_2(W_B)=1$.
			\item When $s\geq 13$, $(Y_s,\xi_s)$ admits no symplectic fillings.
		\end{itemize}
	\end{prop}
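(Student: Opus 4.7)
The plan is to adapt the strategy of the genus-$1$ case while handling both possible singularity configurations (two simple cusps, or one cusp of type $(2,5)$) in parallel. Both configurations satisfy $\sum_p M(p) = 8$ and $\min_p \ell(p) = 2$, and in both cases the minimal smooth resolution of $C$ is obtained by two blow-ups, producing a smooth proper transform $\widetilde{C}$ of self-intersection $s-8$.

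For the non-fillability assertion ($s\ge 13$): in the two-simple-cusp case, both cusps satisfy $\ell(p_1)=\ell(p_2)=2$, and the hypothesis $C\cdot C\ge\sum M(p_i)+2\min\ell(p_i)+1 = 13$ of Proposition~\ref{p:high-six} is met, so $\xi_C$ is not weakly fillable. In the single $(2,5)$-cusp case, Proposition~\ref{p:high-five} applies directly for $s\ge 14$. The boundary value $s=13$ is handled by a parallel analysis in which we do only $\ell + \ell = 4$ additional blow-ups (rather than $2\ell+1 = 5$) to bring $\widetilde{C}$ to self-intersection $+1$; this yields a configuration like~\eqref{e:high-five} but with a chain of $2\ell - 1 = 3$ $-2$-spheres attached asymmetrically, and the argument that the $(-\ell-1)$-sphere cannot be realized homologically proceeds verbatim.

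For the classification in the fillable range $9\le s\le 12$, let $W$ be a minimal symplectic filling and cap it off with a standard concave neighborhood of $C$ to obtain a closed symplectic manifold $(X,\omega)$ containing $C$ relatively minimally. Resolve the singularities of $C$ and, if $s>9$, blow up $s-9$ further times at a smooth point of the proper transform along a chain of exceptional divisors to reach $\widetilde{C}$ of self-intersection $+1$. By Theorem~\ref{thm:mcduff} this ambient manifold is symplectomorphic to some $\cptwo\# N\cptwobar$ with $\widetilde{C}=\cpone$, so $[\widetilde{C}]=h$. A direct application of Lemmas~\ref{l:hom}, \ref{l:share2}, \ref{l:pos} and~\ref{l:2chain} then enumerates the possible homology classes: in the two-cusp case, two tangent $-1$-spheres of class $2h-e_{i_1}-\cdots-e_{i_5}$ at disjoint index sets together with a chain (a transverse $-1$-sphere followed by $-2$-spheres); in the $(2,5)$-cusp case, one tangent $-1$-sphere is replaced by a pair consisting of a tangent $-1$-sphere and an attached $-2$-sphere. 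Lemma~\ref{l:blowdown} then reduces each homological embedding to a low-degree configuration in $\cptwo$ of known unique symplectic isotopy type, so each homological embedding corresponds to exactly one minimal filling, with $b_2(W)=13-s$ by Mayer--Vietoris together with negative-definiteness.

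Counting the homological embeddings completes the argument. For $9\le s\le 11$ the constraints of Lemma~\ref{l:2chain} admit a unique arrangement, yielding a unique $W_s$. For $s=12$ there are two non-equivalent placements of the single $-2$-sphere relative to the chain; these correspond to symplectic embeddings of the total transform into $\cptwo\#\cptwobar$ and into $S^2\times S^2$ respectively, and the resulting fillings $W_A, W_B$ each have $b_2=1$ but are distinguished by their intersection forms, exactly as in the $s=8$ sub-case of the genus $1$ analysis. The most delicate step is the $s=13$ case for the $(2,5)$-cusp, which just misses the threshold of Proposition~\ref{p:high-five} and requires the slight adaptation described above; beyond that, verifying that the two $s=12$ homological embeddings yield non-diffeomorphic fillings requires careful but routine bookkeeping modeled on the $s=8$ genus $1$ computation.
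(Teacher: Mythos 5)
Your treatment of the fillable range $9\le s\le 12$ follows the paper's proof essentially verbatim (resolve, append a chain of $s-9$ exceptional divisors at a generic smooth point, apply Theorem~\ref{thm:mcduff} and the homological lemmas, blow down to configurations of conics and lines with unique isotopy class, and distinguish the two $s=12$ fillings by their intersection forms), and your use of Proposition~\ref{p:high-six} for the two-simple-cusp case at $s\ge 13$ is a correct and clean alternative to what the paper does there. The genuine problem is the $s=13$, $(2,5)$-cusp case, where your claim that the argument of Proposition~\ref{p:high-five} ``proceeds verbatim'' after dropping one blow-up is false. In the original proof the contradiction is quantitative: disjointness from the length-$\ell$ half of the chain forces the coefficients of $e_1,\dots,e_{\ell+1}$ in the class of the $(-\ell-1)$-sphere to all equal $-1$, so its square is at most $-\ell-2$, strictly less than $-\ell-1$. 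When you shorten the chain to $2\ell-1$ spheres, the half adjacent to the $(-1)$-sphere has only $\ell-1$ vertices, so only $a_1=\dots=a_\ell=-1$ is forced, and the class $e_0-e_1-\dots-e_\ell$ has square exactly $-\ell-1$: no contradiction. Concretely, for $\ell=2$ the chain $e_1-e_2,\,e_2-e_3,\,e_3-e_4$ with central vertex $e_2-e_3$ is compatible with the $(-3)$-sphere in class $e_0-e_1-e_2$, which has the right square and the right intersections with everything in configuration~\eqref{e:high-five}.

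To close this case you must use more of the resolution than~\eqref{e:high-five} records. One option is to bring in the remaining exceptional divisor of the minimal resolution of the $(2,5)$ cusp (the $(-2)$-sphere $E'$ meeting the $(-3)$-sphere once and nothing else): Lemma~\ref{l:share2} forces $[E']=e_a-e_0$, which then fails to be disjoint from the $(-1)$-sphere $h-e_0-e_1$, a contradiction -- but this is an additional argument, not a verbatim repetition. The paper instead avoids the issue entirely by never resolving the tangency: it keeps the conic class $2h-e_1-\dots-e_5$ together with $e_5-e_6$ and shows directly that a chain containing three or more $(-2)$-spheres attached to $h-e_1-e_2$ admits no homological embedding, which handles all $s\ge 13$ uniformly for both singularity configurations (and the paper explicitly notes that Proposition~\ref{p:high-five} alone only reaches $s\ge 14$). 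Either fix is fine, but as written your proof of non-fillability at $s=13$ for the $(2,5)$ cusp does not go through.
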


	\begin{proof}
		We proceed with the same methods as used in the genus-$1$ case. The resolutions we embed into $X\#(s-7)\cptwobar \cong \cptwo\#N\cptwobar$ are shown in Figures~\ref{fig:genus2a} and~\ref{fig:genus2b} for the two types of singularity configurations. In each case, we blow up a sufficient number of times so that the proper transform $\tilde C$ has self-intersection $1$, thus identifying $\tilde C$ with $\cpone\subset \cptwo$ using Theorem~\ref{thm:mcduff}.
		
		\begin{figure}
			\centering
			\includegraphics[scale=.6]{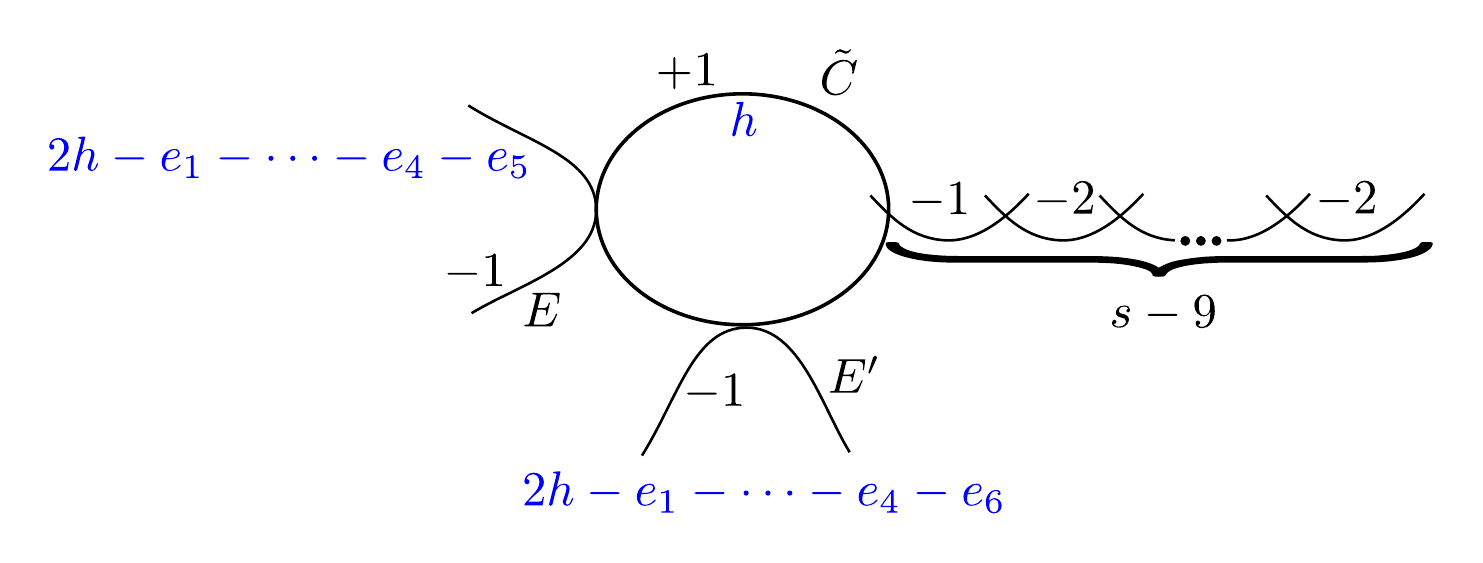}
			\caption{A blow-up of a curve $C$ with a two simple cusps and self-intersection $s$, so that the proper transform $\tilde{C}$ has self-intersection $+1$.}
			\label{fig:genus2a}
		\end{figure}
	
		\begin{figure}
			\centering
			\includegraphics[scale=.6]{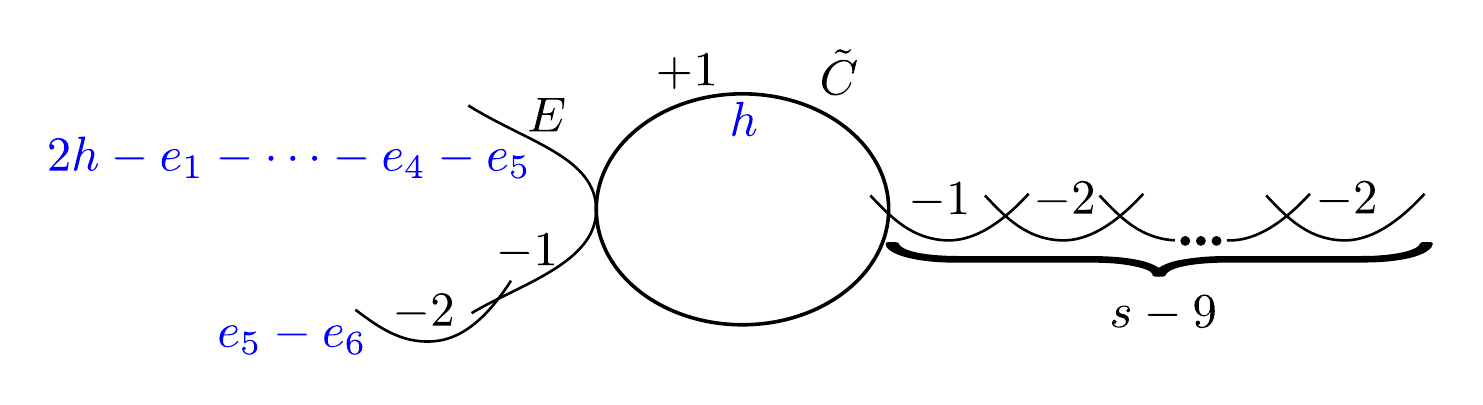}
			\caption{A blow-up of a curve $C$ with a one cusp of type $(2,5)$ and self-intersection $s$, so that the proper transform $\tilde{C}$ has self-intersection $+1$.}
			\label{fig:genus2b}
		\end{figure}

		In the case of two simple cusps, there are two tangent exceptional divisors of self-intersection $-1$ representing classes $2h-e_1-\cdots -e_4-e_5$ and $2h-e_1-\cdots-e_4-e_6$ by Lemma~\ref{l:hom}. By Lemmas~\ref{l:hom} and~\ref{l:2chain}, the exceptional divisors in the chain of length $s-9$ represent classes $h-e_1-e_2, e_2-e_3, e_3-e_4$, or $h-e_1-e_2, e_2-e_3, e_1-e_2$ (when $s<12$, the sequence is truncated and there is a unique option). Using Lemma~\ref{l:blowdown}, we blow down exceptional spheres in the $e_i$ classes which either intersect the configuration positively or are contained in the image of the configuration after blow-down. In the first case, the embedding descends to two conics intersecting transversally with one line tangent to both, and another line passing through two of the intersection points of the conics. In the second case, our two conics intersect at one simple tangency and two transverse intersection points, and the last line is tangent to the two conics at the point where they are tangent to each other. In both cases, without the last line, the configuration has a unique symplectic isotopy class by~\cite[Theorem~1.5]{bigGS}, so adding in the last line, the entire configuration has a unique symplectic isotopy class by~\cite[Proposition~5.1]{bigGS}.  Therefore, there is a unique relatively minimal symplectic embedding when $9\leq s\leq 11$ and there are two relatively minimal symplectic embeddings when $s=12$. 
		
		When $9\leq s\leq 11$, the unique relatively minimal symplectic embeddings correspond to unique minimal symplectic fillings of $(Y_s,\xi_s)$ and the Betti number calculation follows from the Mayer--Vietoris long exact sequence and the fact that $Y_s$ is a rational homology sphere. To distinguish the two fillings when $s=12$, we can again look at the intersection form. For the homological embedding where the last sphere represents $e_3-e_4$, the generator of the non-torsion part of the homology of the complement is $e_1-e_2-e_3-e_4+2e_5+2e_6$ which has self-intersection $-12$. For the homological embedding where the last sphere represents $e_1-e_2$, the generator of the non-torsion part of the homology of the complement is $e_4-e_5-e_6$ which has self-intersection $-3$.
		
		We observe that it is not possible to find a homological embedding if there are more than two $-2$-spheres in the chain (i.e. when $s>12$), since such a $(-2)$-sphere must represent a class $e_i-e_j$ which intersects $e_3-e_4$ (or $e_1-e_2$) once positively, and has intersection zero with all other curves in the configuration. Therefore, there are no symplectic embeddings of $C$ into a closed symplectic manifold when $s\geq 13$, so $(Y_s,\xi_s)$ has no symplectic fillings when $s\geq 13$. Note that Proposition~\ref{p:high-five} would only imply this for $s\geq 14$.

		In the case of one cusp of type $(2,5)$, there is one exceptional divisor $E$ of self-intersection $-1$ tangent to the proper transform $\tilde C$, and another exceptional divisor $E'$ of self-intersection $-2$ which intersects $E$ once transversally and is disjoint from all other curves. Up to relabeling, $[E]=2h-e_1-\cdots -e_5$ and $[E']=e_5-e_6$. The exceptional divisors in the chain of length $s-9$ again represent classes $h-e_1-e_2, e_2-e_3, e_3-e_4$ or $h-e_1-e_2, e_2-e_3, e_1-e_2$. Blowing down exceptional spheres, this configuration descends to a single conic with two lines, which has a unique symplectic isotopy class by~\cite[Theorem~1.5]{bigGS}. Thus this yields a unique symplectic filling when $9\leq s\leq 11$. The two symplectic fillings corresponding to the two homological embeddings when $s=12$ can be distinguished by their intersection forms, as their non-torsion homology is generated by $e_1-e_2-e_3-e_4+2e_5+2e_6$ or $e_4-e_5-e_6$ respectively as in the previous case. Similarly, we obtain no possible homological embeddings when there are more than two $-2$-spheres in the chain so when $s\geq 13$, there are no symplectic fillings of $(Y_s,\xi_s)$.
	\end{proof}

\subsection{Genus 3}

When we allow the algebraic genus of the rational cuspidal curve to increase to $3$, there are more options for the types of cusps. The first option is to have a single cusp of type $(3,4)$ (multiplicity sequence $[3]$), where we consider self-intersection numbers $s\geq 10$. The second is to have a single cusp of type $(2,7)$ (multiplicity sequence $[2,2,2]$). The third option is to have two cusps, one a simple cusp (multiplicity sequence $[2]$) and the other of type $(2,5)$ (multiplicity sequence $[2,2]$). The fourth option is to have three simple cusps (each with multiplicity sequence $[2]$). In the second, third, and fourth options, we can classify fillings when the self-intersection number satisfies $s\geq 13$.

\begin{prop}
	Let $C$ be a rational cuspidal curve with self-intersection $s$ and a cusp of type $(3,4)$ being the unique singularity, and let $(Y_s,\xi_s)$ be the corresponding contact manifold.
	\begin{itemize}
		\item When $10\leq s \leq 16$, $(Y_s,\xi_s)$ has a unique minimal symplectic filling $W$ and $b_2(W)=16-s$.
		\item When $s\geq 17$, $(Y_s,\xi_s)$ is not symplectically fillable.
	\end{itemize}
\end{prop}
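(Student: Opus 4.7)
The second bullet will follow immediately from Proposition~\ref{p:high-five}: the cusp of type $(3,4)$ has multiplicity sequence $[3]$, so $M(p) = 9$ and $\ell(p) = 3$, and the proposition rules out fillability as soon as $s \ge M(p) + 2\ell(p) + 2 = 17$.

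For the first bullet, the plan is to follow the strategy of the preceding genus~$1$ and genus~$2$ propositions. Starting from an arbitrary minimal symplectic filling $W$, I will glue $W$ to a standard concave neighborhood of $C$ to obtain a closed symplectic manifold $X$ containing $C$. Because the multiplicity sequence is $[3]$, a single blow-up at the cusp yields the minimal smooth resolution: the proper transform $\tilde C$ is a smooth sphere of self-intersection $s-9$, meeting the $(-1)$-exceptional divisor $E$ at one point with multiplicity~$3$. I then blow up $s-10$ more times at distinct smooth points of $\tilde C$ away from $E$, producing pairwise disjoint $(-1)$-exceptional divisors $F_1,\dots,F_{s-10}$ each transverse to $\tilde C$ and disjoint from $E$, and bringing $\tilde C^2$ up to $+1$. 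Theorem~\ref{thm:mcduff} identifies the resulting manifold with $\cptwo\#N\cptwobar$, with $\tilde C = \cpone$ and $[\tilde C] = h$.

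Next I will pin down the homology classes using Lemma~\ref{l:hom}. Lemma~\ref{l:hom}(5) forces $[E] = 3h - 2e_1 - e_2 - \cdots - e_7$ up to relabeling, and Lemma~\ref{l:hom}(4) gives $[F_j] = h - e_{p_j} - e_{q_j}$; imposing $[F_j]\cdot [E] = 0$ yields $b_{p_j} + b_{q_j} = -3$ with $b_i$ the $e_i$-coefficient of $[E]$, forcing $\{p_j,q_j\} = \{1, k_j\}$ with $k_j \in \{2,\dots,7\}$. Pairwise disjointness of the $F_j$ then forces the $k_j$ to be distinct, which simultaneously produces the range $s \le 16$, a unique homological embedding up to relabeling, and $N = 7$. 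A Mayer--Vietoris computation (using that $Y_s$ is a rational homology sphere and that the neighborhood of $C$ has $b_2=1$) will then give $b_2(W) = b_2(X)-1 = 16-s$.

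Finally, to promote the unique homological embedding to a unique symplectic isotopy class, I will apply Lemma~\ref{l:blowdown} to produce exceptional spheres in the classes $e_1,\dots,e_7$ meeting the configuration positively, and blow them down in sequence. The configuration descends to $\cptwo$ as a nodal cubic (the image of $E$, whose node is the image of the $-1$-sphere in class $e_1$), a line meeting it at a smooth flex point with multiplicity~$3$ (the image of $\tilde C$), together with $s-10$ further lines each through the node and a distinct smooth point of the cubic (the images of the $F_j$). I then plan to invoke~\cite[Theorem~1.5]{bigGS} together with~\cite[Proposition~5.1]{bigGS} to conclude that this configuration in $\cptwo$ has a unique symplectic isotopy class, and hence that the filling is unique. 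The main obstacle I anticipate is verifying that the descended cubic is genuinely nodal rather than cuspidal in the symplectic category, which should follow by choosing the exceptional spheres produced by Lemma~\ref{l:blowdown} to intersect $E$ at two transverse points at the $(-2)$-location; and then carefully applying the classification of~\cite{bigGS} to the flex-line setup, which is slightly more delicate than the smooth-conic-plus-lines configurations handled in the lower-genus cases.
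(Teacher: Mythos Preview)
Your approach is valid and reaches the same conclusion, but it diverges from the paper's in one structural choice that cascades through the rest of the argument.

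The paper blows up $s-10$ times at a \emph{single} smooth point of $\tilde C$, producing a chain of one $(-1)$-sphere followed by $(-2)$-spheres (exactly as in the genus-$1$ and genus-$2$ cases). The chain lemmas then force the homology classes to be a truncation of $h-e_1-e_2,\, e_2-e_3,\dots, e_6-e_7$, and after blowing down one obtains a (nodal or cuspidal) cubic, its inflection tangent, and at most \emph{one} additional line through the singular point, uniformly in $s$. You instead blow up at $s-10$ \emph{distinct} smooth points, obtaining disjoint $(-1)$-spheres; your descended configuration then carries up to six lines through the singular point. Both routes give a unique homological embedding into $\cptwo\#7\cptwobar$ and the bound $s\le 16$, but the paper's choice keeps the target configuration in $\cptwo$ small and independent of $s$, so the isotopy step is a single invocation of~\cite[Proposition~5.1]{bigGS} rather than an iteration.

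Two specific corrections. First, \cite[Theorem~1.5]{bigGS} concerns configurations of conics and lines; it does not cover a singular cubic with an inflection tangent. The relevant base case is~\cite[Proposition~5.11]{bigGS}, which is what the paper invokes. Second, your proposed resolution of the nodal-versus-cuspidal ambiguity does not work as stated: Lemma~\ref{l:blowdown} only guarantees that the exceptional sphere in class $e_1$ meets $E$ \emph{positively}, not that the two intersection points are transverse, so you cannot force nodality this way. The paper sidesteps this by simply allowing the descended cubic to be either nodal or cuspidal, since~\cite[Proposition~5.11]{bigGS} covers both; you should do the same.
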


\begin{proof}
	That there are no fillings when $s\geq 17$ follows from Proposition~\ref{p:high-five}.
	
	\begin{figure}
		\centering
		\includegraphics[scale=.6]{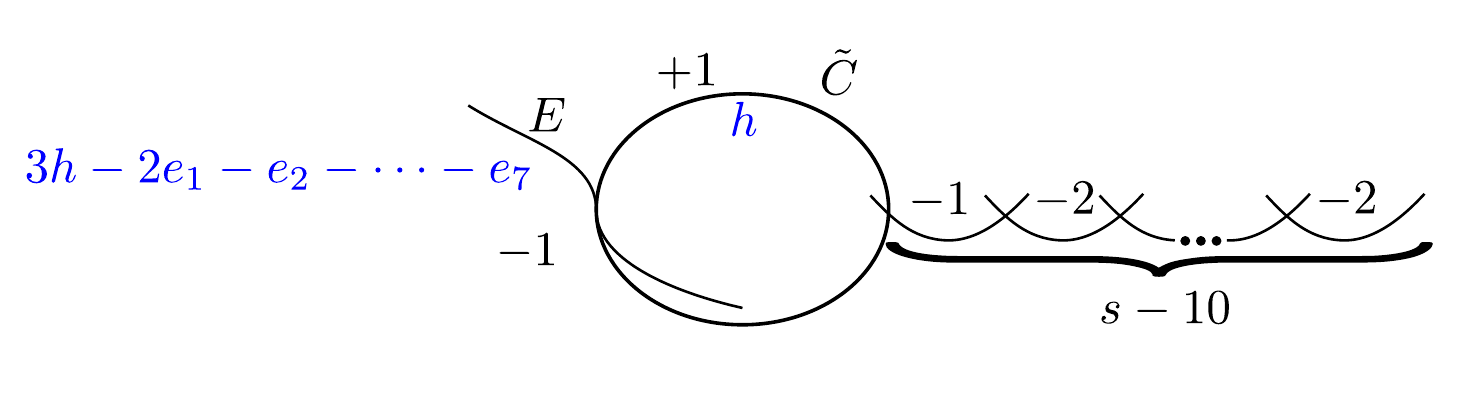}
		\caption{A blow-up of a curve $C$ with a one cusp of type $(3,4)$ and self-intersection $s$, so that the proper transform $\tilde{C}$ has self-intersection $+1$.}
		\label{fig:genus3a}
	\end{figure}
	
	For $10\leq s\leq 16$, we proceed as in the lower genus cases. Blow up an embedding of $C$ into $X$ once at the cusp to get the minimal smooth resolution so that the exceptional sphere $E$ is tangent with multiplicity $3$ to the proper transform of $C$. We blow-up $s-10$ additional times at a different point so that $\tilde C$ has self-intersection $1$, yielding a chain of $s-10$ exceptional spheres as in Figure~\ref{fig:genus3a}. Identifying $\tilde C$ with $\cpone$ by Theorem~\ref{thm:mcduff}
	so $[\tilde C]=h$,  by Lemma~\ref{l:hom}, we see that $[E]=3h-2e_1-e_2-\cdots-e_7$, and the chain of $s-8$ exceptional spheres represent the classes in (a truncation of) the sequence $h-e_1-e_2, e_2-e_3, e_3-e_4, e_4-e_5, e_5-e_6, e_6-e_7$. Blowing down exceptional spheres representing $e_1,\cdots, e_7$ using Lemma~\ref{l:blowdown}, $E$ descends to a cubic curve with either a node or a cusp, $\tilde C$ descends to an inflection line, and the first divisor in the chain descends to a line passing through the singular point of the cubic and one other point. Without the last line, this configuration has a unique symplectic isotopy class by~\cite[Proposition~5.11]{bigGS}. We get a unique symplectic isotopy class for the configuration which includes the last line by~\cite[Proposition~5.1]{bigGS}. Thus there is a unique relatively minimal symplectic embedding of this blow-up of $C$ into a closed manifold and that manifold must be $\cptwo\#7\cptwobar$. We conclude that $(Y_s,\xi_s)$ has a unique minimal symplectic filling with $b_2 = 16-s$.
\end{proof}

\begin{prop}
	Let $C$ be a rational cuspidal curve with self-intersection $s$ and either a unique $(2,7)$ cusp, or one $(2,3)$ cusp and one $(2,5)$ cusp, and let $(Y_s,\xi_s)$ be the corresponding contact manifold.
	\begin{itemize}
		\item When $13\leq s \leq 15$, $(Y_s,\xi_s)$ has two minimal symplectic fillings $W$ and $W'$. $b_2(W)=16-s$ and $b_2(W')=17-s$.
		\item When $s=16$, $(Y_s,\xi_s)$ has three minimal symplectic fillings $W$, $W'$ and $W''$. $b_2(W)=16-s$ and $b_2(W')=b_2(W'')=17-s$.
		\item When $s\geq 17$, $(Y_s,\xi_s)$ is not symplectically fillable.
	\end{itemize}
\end{prop}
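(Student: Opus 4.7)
The plan is to mimic the structure of the two preceding propositions. For the non-fillability at $s\ge 17$, when $C$ has a $(2,3)$-cusp and a $(2,5)$-cusp we have $\ell(p_1)=\ell(p_2)=2$ and $M(p_1)+M(p_2)=4+8=12$, so Proposition~\ref{p:high-six} applies with threshold $\sum M + 2\min \ell + 1 = 17$, ruling out even weak fillings. For the single $(2,7)$-cusp case, Proposition~\ref{p:high-five} only covers $s\ge 18$, so the case $s=17$ will instead be handled via the embedding enumeration below, by showing no homological embedding exists.

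For $13 \le s \le 16$, I would blow up $C$ at its cusp(s) to obtain the minimal smooth resolution, and then blow up $s-13$ further times at a smooth point so that the proper transform $\tilde C$ acquires self-intersection $+1$. Theorem~\ref{thm:mcduff} then lets us identify $\tilde C$ with $\cpone \subset \cptwo\#N\cptwobar$. In the $(2,7)$ case the resolution configuration is a linear chain $\tilde C, E_3, E_2, E_1$ of weights $(+1,-1,-2,-2)$ with $E_3$ simply tangent to $\tilde C$; in the $(2,3)+(2,5)$ case it consists of two $(-1)$-spheres tangent to $\tilde C$, one of which carries an attached $(-2)$-sphere. In both cases, the extra blow-ups contribute a chain $F_0,\ldots,F_{s-14}$ where $F_0$ is a $(-1)$-sphere transverse to $\tilde C$ and each remaining $F_i$ is a $(-2)$-sphere. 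Using Lemmas~\ref{l:hom}, \ref{l:consecutive}, \ref{l:pos}, and~\ref{l:2chain}, every $(-1)$-sphere tangent to $\tilde C$ is forced into the class $2h - e_1 - \cdots - e_5$ up to relabeling, and the first adjacent $(-2)$-sphere into $e_5 - e_6$. The next $(-2)$-sphere in the resolution chain then splits into two possibilities up to relabeling: Case~A with class $e_6-e_7$ (opening a new index) and Case~B with class $e_4-e_5$ (reusing an existing one).

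A case analysis of the $F_i$ then shows that for $13\le s\le 15$, each of Cases~A and~B admits a unique extension up to relabeling, giving the two fillings with $b_2=17-s$ and $b_2=16-s$ respectively. For $s=16$, Case~B still has a unique extension yielding a rational homology ball ($b_2=0$), but Case~A splits into sub-case A.1 (with $F_2=e_3-e_4$, continuing the chain ``outward'') and sub-case A.2 (with $F_2=e_2-e_1$, closing the chain back onto existing indices), giving two distinct fillings with $b_2=1$. Verifying that neither Case~A nor Case~B extends to a chain of length $s-13=4$ handles the non-fillability at $s=17$ in the $(2,7)$ case. For each homological embedding, Lemma~\ref{l:blowdown} allows one to blow down exceptional spheres until the configuration descends to a small collection of lines and conics in $\cptwo$, whose symplectic isotopy class is unique by~\cite[Theorem~1.5]{bigGS} together with~\cite[Proposition~5.1]{bigGS}; reversing the blow-downs gives a unique relatively minimal symplectic embedding of $C$ for each homological class. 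Distinct fillings are distinguished by $b_2$, or, for the two $b_2=1$ fillings at $s=16$, by the self-intersection of the generator of $H_2/\mathrm{tor}$: a direct calculation gives $-16$ in Case~A.1 and $-4$ in Case~A.2.

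The main obstacle is the enumeration at $s=16$: showing that Case~A admits exactly two inequivalent extensions A.1 and A.2 rather than one, and verifying that Case~B's unique extension does close up into a rational homology ball. This is the only place where the genus-$3$ argument departs meaningfully from the genus-$1$ and genus-$2$ cases; everything else parallels those proofs.
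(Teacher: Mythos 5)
Your proposal follows essentially the same route as the paper's proof: enumerate the homological embeddings of the resolution (the two choices $e_6-e_7$ versus $e_4-e_5$ for the last exceptional divisor of the minimal resolution, and the two possible terminations of the auxiliary chain when $s=16$), invoke the isotopy-uniqueness results of~\cite{bigGS} for the blown-down configurations of lines and conics, and distinguish the two $b_2=1$ fillings at $s=16$ by the squares $-16$ and $-4$ of the generators of the torsion-free homology, exactly as the paper does. The only (immaterial) differences are that the paper obtains non-fillability for $s\ge 17$ entirely from the non-existence of a fourth chain element rather than partly via Proposition~\ref{p:high-six}, and that your sub-case A.2 class should read $e_1-e_2$ rather than $e_2-e_1$ so that it meets $e_2-e_3$ positively.
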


\begin{proof}
	\begin{figure}
		\centering
		\includegraphics[scale=.6]{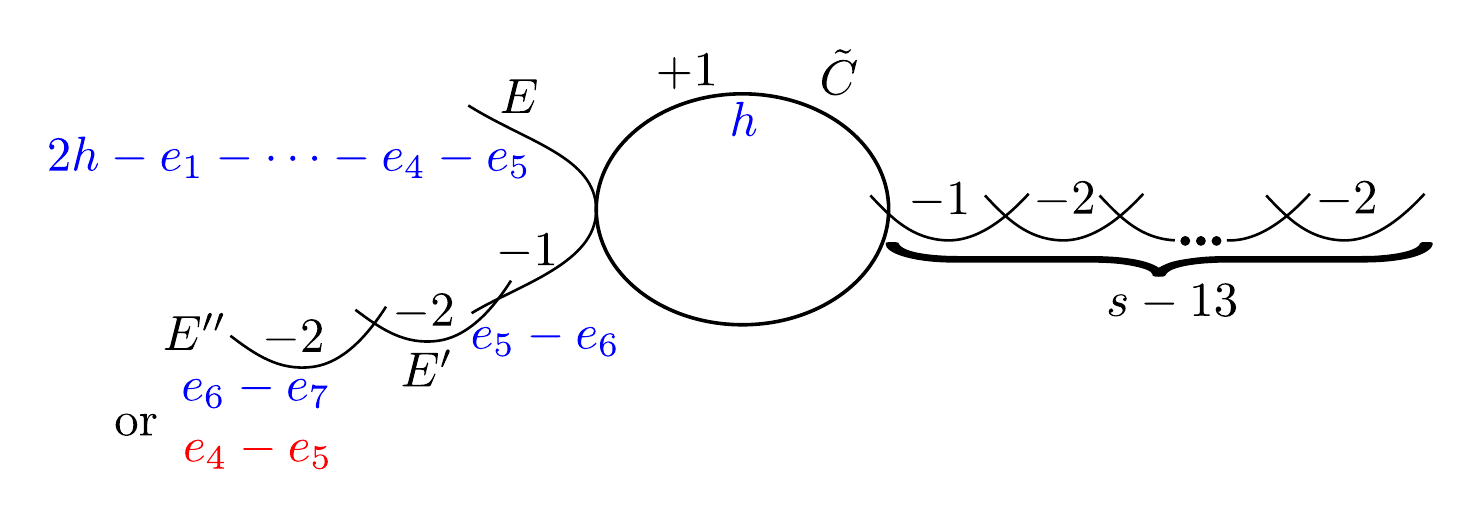}
		\caption{A blow-up of a curve $C$ with a one cusp of type $(2,7)$ and self-intersection $s$, so that the proper transform $\tilde{C}$ has self-intersection $+1$.}
		\label{fig:genus3b}
	\end{figure}

	\begin{figure}
		\centering
		\includegraphics[scale=.6]{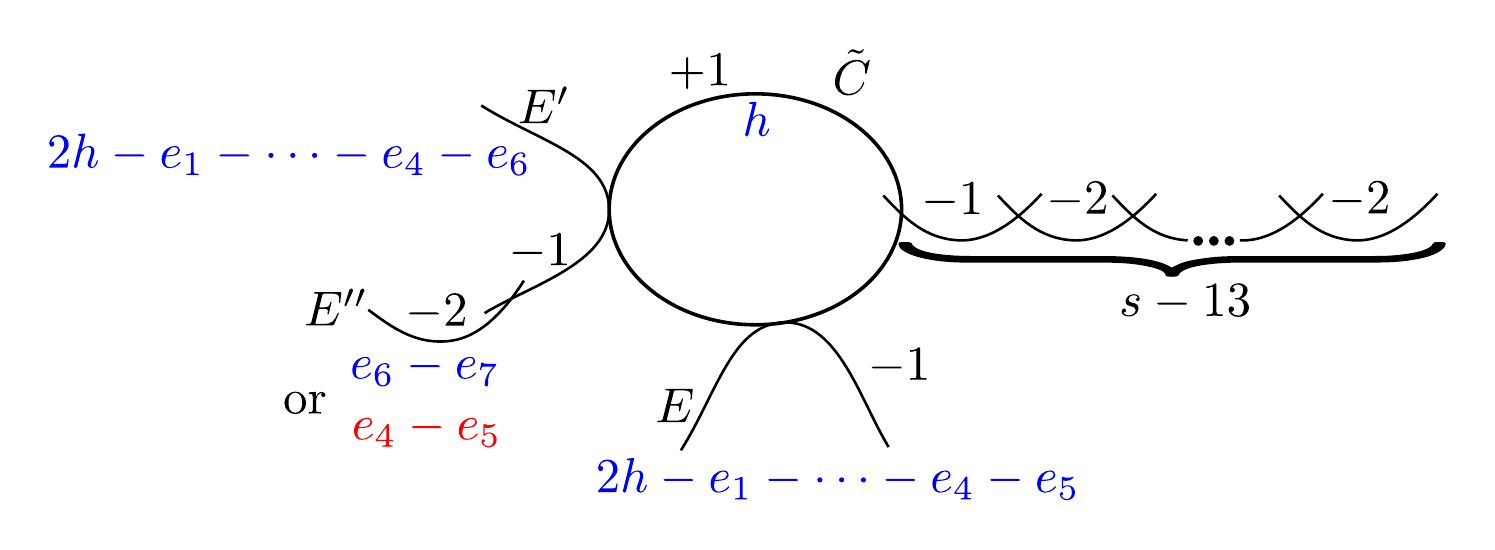}
		\caption{A blow-up of a curve $C$ with a one cusp of type $(2,5)$ and one of type $(2,3)$ and self-intersection $s$, so that the proper transform $\tilde{C}$ has self-intersection $+1$.}
		\label{fig:genus3c}
	\end{figure}
	
	Proceeding as in prior cases, we blow up an embedding of $C$ to its minimal resolution, then blow up $s-13$ additional times at another point until $\tilde C$ has self-intersection $1$ and can be identified with $\cpone$. The three exceptional divisors from the minimal resolution must have homology classes as specified by the options shown in Figures~\ref{fig:genus3b} and~\ref{fig:genus3c} (depending on whether there are $1$ or $2$ cusps). Note that in both cases, there are two options for the homology class of the exceptional divisor $E''$: $e_6-e_7$ or $e_4-e_5$. Also for both cases, the homology classes of the divisors in the extra chain must represent either $h-e_1-e_2,e_2-e_3,e_3-e_4$ or $h-e_1-e_2,e_2-e_3,e_1-e_2$. If $[E'']=e_4-e_5$, only the second option is possible. When $[E'']=e_6-e_7$, both options are possible, but note that they only differ when all three components are included corresponding to the $s=16$ case. Thus, we get three cases when $s=16$ and two cases when $13\leq s\leq 15$. Note, there are no allowed homological embeddings if the chain is longer, so $(Y_s,\xi_s)$ has no fillings when $s\geq 17$. Each configuration descends under blow-downs to a configuration in $\cptwo$ with a unique symplectic isotopy class by~\cite[Theorem~1.5 and Proposition~5.1]{bigGS}, so each homological embedding corresponds to a unique minimal filling. If $[E'']=e_6-e_7$, we have a relatively minimal symplectic embedding into $\cptwo\#7\cptwobar$. If $[E'']=e_4-e_5$ the relatively minimal embedding is into $\cptwo\#6\cptwobar$. Symplectic fillings complementary an embedding in $\cptwo\#7\cptwobar$ will have $b_2=17-s$, and those which are complementary to an embedding in $\cptwo\#6\cptwobar$ will have $b_2=16-s$. Thus, when $13\leq s\leq 15$, the two fillings have different $b_2$ so they are clearly not diffeomorphic. When $s=16$, there are two potential fillings with $b_2=17-s$. To distinguish these, we look at the square of the generator for the homology of the complement. In one case the generator is $e_1-e_2-e_3-e_4+2e_5+2e_6+2e_7$ of square $-16$ and in the other case the generator is $e_4-e_5-e_6-e_7$ of square $-4$. 
\end{proof}
	
	\begin{remark}
	When $s=16$, in the minimal symplectic filling with $b_2=17-s=1$ where the homology class of $E''$ agrees with the option for the filling with $b_2=16-s=0$, we see that there is a symplectic $-4$-sphere in this filling representing the class $e_4-e_5-e_6-e_7$. Rationally blowing down this $-4$-sphere will yield another filling which is necessarily the unique rational homology ball filling. The other minimal symplectic filling with $b_2=17-s=1$ cannot be rationally blown-down (since there is no $-4$-sphere in a manifold with intersection form $\langle -16\rangle$).
	\end{remark}
	
	\begin{prop}
		Let $C$ be a rational cuspidal curve with self-intersection $s$ and three simple cusps, and let $(Y_s,\xi_s)$ be the corresponding contact manifold.
		\begin{itemize}
			\item When $13\leq s \leq 16$, $(Y_s,\xi_s)$ has a unique minimal symplectic filling $W$ and $b_2(W)=16-s$.
			\item When $s\geq 17$, $(Y_s,\xi_s)$ is not symplectically fillable.
		\end{itemize}
	\end{prop}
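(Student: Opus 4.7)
The plan is to follow the template of the preceding propositions: invoke the non-fillability bound of Section~\ref{ss:bounds} for large $s$, and for the remaining range classify relatively minimal symplectic embeddings of $C$ via the resolution-plus-blow-up technique together with McDuff's theorem and the homological lemmas of Section~\ref{s:background}. For $s \geq 17$, the plan is to apply Proposition~\ref{p:high-six} directly: three simple cusps give $M(p_i) = 4$ and $\ell(p_i) = 2$ for every $i$, so $\ell(p_1) = \ell(p_2) = \min_i \ell(p_i) = 2$ and the bound $\sum M(p_i) + 2\min_i \ell(p_i) + 1 = 12 + 4 + 1 = 17$ rules out even weak symplectic fillability.

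For $13 \leq s \leq 16$, I would blow up $X$ once at each of the three cusps, producing three exceptional $-1$-spheres $E_1, E_2, E_3$ each simply tangent to the proper transform $\tilde{C}$, and then blow up $s - 13$ more times at an additional generic point to drop $\tilde{C}^2$ to $+1$. By Theorem~\ref{thm:mcduff} the resulting manifold is symplectomorphic to $\cptwo \# N \cptwobar$ with $\tilde{C}$ identified with $\cpone$. Lemma~\ref{l:hom} then forces $[E_i] = 2h - \sum_{j \in S_i} e_j$ with $|S_i| = 5$; pairwise disjointness of the $E_i$'s forces $|S_i \cap S_j| = 4$ for $i \neq j$; the chain of $s - 13$ exceptional spheres begins with a $-1$-sphere $F_1 = h - e_a - e_b$ whose indices satisfy $\{a, b\} \subseteq S_1 \cap S_2 \cap S_3$ (so that $F_1$ is disjoint from each $E_i$); and the subsequent $-2$-spheres are governed by Lemmas~\ref{l:2chain} and~\ref{l:2chainfix}.

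The main obstacle will be the combinatorial analysis needed to pin $(S_1, S_2, S_3)$ down uniquely. A direct count shows that, up to relabeling, either $|S_1 \cap S_2 \cap S_3| = 3$ (``Case A'', which uses six exceptional classes, realised by the classical rational cuspidal quartic in $\cptwo$) or $|S_1 \cap S_2 \cap S_3| = 4$ (``Case B'', which uses seven). The plan is to rule out Case B via a Bezout-type obstruction: after blowing down to $\cptwo$, Case B would yield three distinct smooth conics passing through four common points and all simply tangent to a common line, yet the pencil of conics through four fixed general points contains only two conics tangent to any given line (the discriminant condition being quadratic in the pencil parameter); this algebraic obstruction should carry over to the $J$-holomorphic setting in the spirit of the Riemann--Hurwitz reasoning used for $W_6$ in Section~\ref{s:qbd}. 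Once Case B is excluded, the disjointness requirement on each $-2$-sphere $e_a - e_b$ in the chain forces $\{a, b\} \subseteq S_1 \cap S_2 \cap S_3 = \{1, 2, 3\}$, after which Lemma~\ref{l:2chainfix} combined with the chain intersection pattern with $F_1$ uniquely determines the chain classes (up to relabeling). Uniqueness of the symplectic isotopy class of the resulting total-transform configuration in $\cptwo$ then follows from~\cite[Theorem~1.5]{bigGS} together with~\cite[Proposition~5.1]{bigGS}, giving a unique minimal symplectic filling $W$, and a Mayer--Vietoris count yields $b_2(W) = 16 - s$.
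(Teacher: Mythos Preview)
Your strategy matches the paper's: Proposition~\ref{p:high-six} for $s\ge 17$, then a homological classification of the resolution embedding for $13\le s\le 16$. Your dichotomy into $|S_1\cap S_2\cap S_3|=3$ (Case~A) versus $|S_1\cap S_2\cap S_3|=4$ (Case~B) is exactly what the paper finds, and your pencil argument against Case~B is essentially the content of~\cite[Proposition~5.23]{bigGS}, which is what the paper invokes. One minor omission: when $s=16$ there are two Case~B sub-embeddings (the last $-2$-sphere can be $e_3-e_4$ or $e_1-e_2$), and in the latter the blown-down pencil has one base point of multiplicity two rather than four transverse base points; the paper notes that the same obstruction still applies.

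The real gap is in Case~A. You assert that the blown-down configuration has a unique symplectic isotopy class directly by~\cite[Theorem~1.5]{bigGS} and~\cite[Proposition~5.1]{bigGS}, but that configuration is far more intricate than anything those results cover: it consists of three conics sharing one common tangency (the image of $e_1,e_2$) and one transverse triple point (the image of $e_3$), with three further pairwise double points, together with the line $\tilde C$ tangent to all three conics at distinct smooth points and (for $s\ge 14$) an additional line through the common tangency. The paper does \emph{not} get isotopy uniqueness for this directly. Instead it adds an auxiliary line through the two triple points, performs an explicit birational transformation (blow up twice at the tangency and once at the triple point, then contract three resulting $-1$-curves), and lands on a single conic with four tangent lines and one secant---only then do~\cite[Theorem~1.5]{bigGS} and iterated~\cite[Proposition~5.1]{bigGS} apply. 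Without this reduction your isotopy-uniqueness step is unsupported, so the claim that Case~A yields exactly one filling is not yet justified.
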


	\begin{proof}
		The determination of homological embeddings has a similar classification to the previous case. What is different in this situation is that not all of these homological embeddings correspond to actual geometric embeddings, and we need to prove a symplectic isotopy result for the one which does. The possible homological embeddings are shown in Figure~\ref{fig:genus3d}, where the additional chain of $s-13$ exceptional divisors represent classes $h-e_1-e_2$, $e_2-e_3$, and either $e_3-e_4$ or $e_1-e_2$. Note that when $[E'']=2h-e_1-e_2-e_3-e_5-e_6$, and $s=16$, the last divisor in the chain must represent $e_1-e_2$ to be disjoint from $E''$. Therefore we have three homological embeddings. 
		
			\begin{figure}
			\centering
			\includegraphics[scale=.6]{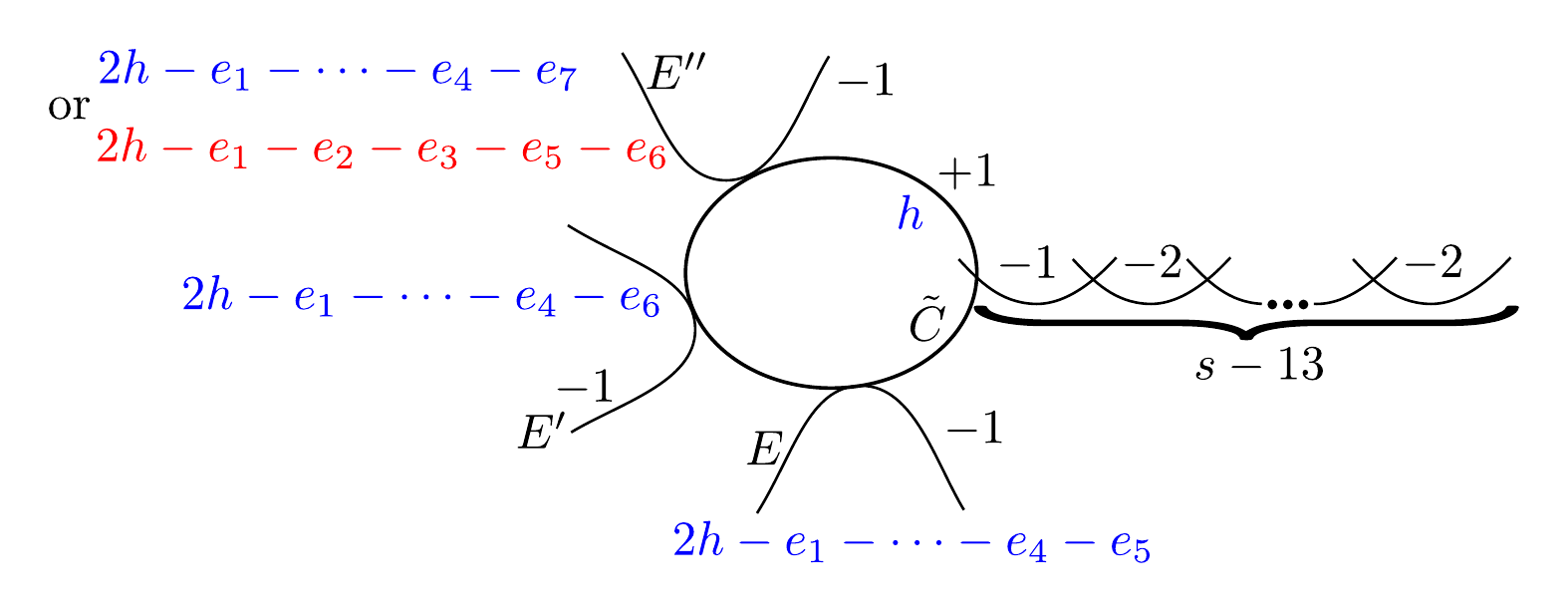}
			\caption{A blow-up of a curve $C$ with a three cusps of type $(2,3)$ and self-intersection $s$, so that the proper transform $\tilde{C}$ has self-intersection $+1$.}
			\label{fig:genus3d}
		\end{figure}
	
		For the two homological embeddings where $[E'']=2h-e_1-\cdots-e_4-e_7$, if we suppose this can be realized symplectically in $\cptwo\#7\cptwobar$, then we can blow down exceptional spheres so that the image in $\cptwo$ of the resolution becomes three symplectic conics in a pencil (all three intersecting at 4 points transversally or all three intersecting at one point tangentially and two points transversally) with a symplectic line tangent to all three conics. This configuration is obstructed by~\cite[Proposition~5.23]{bigGS} (note that the statement refers to a pencil with four transverse intersections, but the same proof applies to the case where the pencil has one base point of multiplicity two--in the birational derivation blow up once at each base point in the pencil). Therefore these homological embeddings cannot be symplectically realized, so there are no corresponding symplectic fillings.
		
		For the last remaining homological embedding where $[E'']=2h-e_1-e_2-e_3-e_5-e_6$ and the chain represents a (truncation of) $h-e_1-e_2$, $e_2-e_3$, $e_1-e_2$, for any symplectic realization of this embedding into $\cptwo\# 6\cptwobar$, there will exist exceptional spheres so that the result blows down to a configuration in $\cptwo$ of three symplectic conics intersecting at one common triple tangency (the image of $e_1$ and $e_2$), one transverse triple point (the image of $e_3$) and a transverse double point for each of the three pairs (the images of $e_4,e_5,e_6$), together with one symplectic line tangent at the common tangency of the conics and another a symplectic line tangent to all three conics at non-intersection points. To verify this configuration has a unique symplectic isotopy class in $\cptwo$, we add a symplectic line through the two points which pass through all three conics--this does not change the symplectic isotopy classification by~\cite[Proposition~5.1]{bigGS}. Then we find a birational equivalence as in Figure~\ref{fig:birat} by (1) blowing up twice at the tangency of the conics and once at the triple intersection of the conics, then (2) blowing down the proper transforms of the two lines which passed through intersection points of the conics (which became $-1$-spheres after step (1)), and (3) blowing down the remaining $-1$-sphere which is the image under the blow-downs of step (2) of an exceptional $-2$-sphere introduced in step (1). The resulting configuration contains a single conic (the image of the line which had three tangencies to the original conics), and five lines---four of which are tangent to the conic (the images of the three original conics and the image of one of the exceptional divisors), and one of which passes transversally through the tangency point of the fourth line with the conic (and otherwise has generic transverse double point intersections). This configuration has a unique symplectic isotopy class because it can be built from a single conic by iteratively applying~\cite[Proposition~5.1]{bigGS}. Therefore, there is a unique minimal symplectic filling corresponding to this homological embedding.
		\end{proof}
		
		\begin{figure}
			\centering
			\includegraphics[scale=.425]{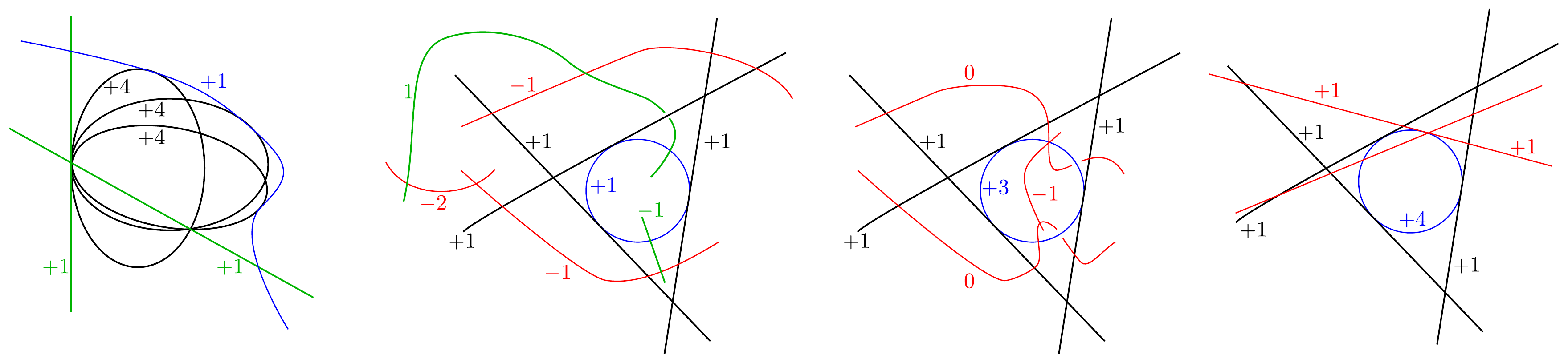}
			\caption{Birational equivalence.}
			\label{fig:birat}
		\end{figure}
}

\section{Further speculations and questions}\label{s:speculations}

	We collect in this section some ideas and questions for further investigation.
	
	\subsection{Handlebodies}
	In Section~\ref{s:AB} we gave explicit Stein handlebody descriptions for all rational homology ball fillings of the cuspidal contact structures associated to the $\Ac_p$ and $\Bc_p$ families. Similar handlebody descriptions are known for rational homology balls bounded by lens spaces or by connected sums of lens spaces~\cite{LekiliMaydanskiy}, thus covering also the two Fibonacci families of~\cite[Theorem 1.1]{FdBLMHN} described in the introduction.
	
	\begin{question}
	Can one find an explicit Stein handlebody description of the rational homology ball fillings of $\Ec$ and $\Dc$?
	\end{question}
	
	In~\cite{AGLL}, there is a (somewhat non-explicit) handle decomposition of a topological rational homology ball bounding the corresponding Seifert fibered spaces.
	Starting from~\cite[Figure~7]{AGLL}, one can recover a handlebody diagram for a rational homology ball bounding the cuspidal manifold of type $\Ec$, as shown in Figure~\ref{f:E3_tophandles}. Does this handlebody support a Stein structure?
	
	\begin{figure}
	\includegraphics[width=0.4\textwidth]{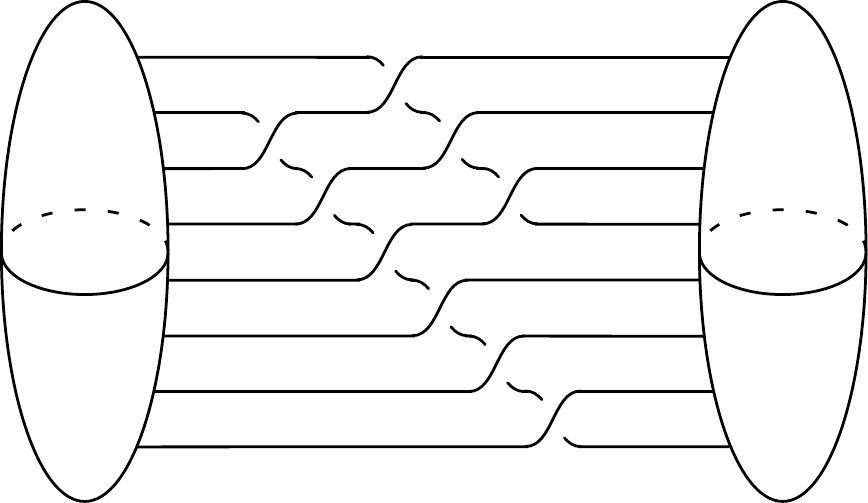}
	\caption{The handlebody description of a rational homology ball whose boundary is the cuspidal manifold of type $\Ec$. The framing of the $2$-handle, in Gompf's convention from~\cite{Gompf2}, is $-9$.}\label{f:E3_tophandles}
	\end{figure}
	
	Note that it would suffice to Legendrian-realize the attaching curve of the 2-handle so that the contact framing is one less than the topological framing: that is, we are looking to find a Legendrian realization of the link of Figure~\ref{f:E3_tophandles} where the 2-handle has Thurston--Bennequin invariant $-8$ (in Gompf's convention~\cite{Gompf2}). By the argument of Proposition~\ref{p:canonical322}, the contact structure obtained on the boundary would automatically be the canonical one (which is the cuspidal one, up to conjugation), and by the classification of fillings, such a diagram would represent the unique rational homology ball filling of $(Y_C,\xi_C)$ where $C$ is a curve of type $\Ec$.
	
	\subsection{Lefschetz fibrations and open books}
	In Sections~\ref{s:AB} and~\ref{s:E3E6} we have given explicit Stein handlebody decompositions for at least one filling for each curve of type $\Ac_p$, $\Bc_p$, $\Ec$, and $\Dc$. For the Fibonacci families, such descriptions were already known to Honda~\cite{Honda} and Giroux~\cite{Giroux} (see also~\cite{LekiliMaydanskiy}).
	
	\begin{question}
	Can one construct explicit Lefschetz fibrations on these fillings?
	\end{question}
	
	As a byproduct, one would also be able to recover an open book decomposition for the cuspidal contact structures in this case. In fact, we conjecture that the diagrams in Figures~\ref{fig:ApLF} and~\ref{fig:BpLF} depict the vanishing cycles of the fillings in the $\Ac_p$ and $\Bc_p$ cases. We verified that the underlying topology is the expected one.
	
	Moreover, adding the orange vanishing cycle in Figure~\ref{fig:ApLF} (respectively the orange vanishing cycle on the left of Figure~\ref{fig:BpLF}) corresponds to attaching a Weinstein $2$-handle along a Legendrian knot in $S^1\times S^2$ which is in the same smooth isotopy class as the Legendrian knot in Figure~\ref{fig:ApStein} (respectively Figure~\ref{fig:BpStein}). We see this by observing that Lefschetz fibrations without these orange curves destabilize to the trivial Lefschetz fibration on the annulus. In the $\Ac_p$ case, the Legendrian represented by the orange vanishing cycle is stabilized $p-1$ times, and the unstabilized version would correspond to the rational homology ball appearing in the standard rational blow-down corresponding to a daisy relation. Starting from a Stein handlebody diagram of this standard rational blow-down, it is not clear whether the Lefschetz fibration depicted represents positive or negative stabilizations of the Legendrian in that diagram, so there is some ambiguity remaining on whether this is the correct Lefschetz fibration. Similar arguments can be made in the $\Bc_p$ case, and a similar ambuiguity remains.
	
			\begin{figure}[h!]
			\centering
			\includegraphics[scale=.45]{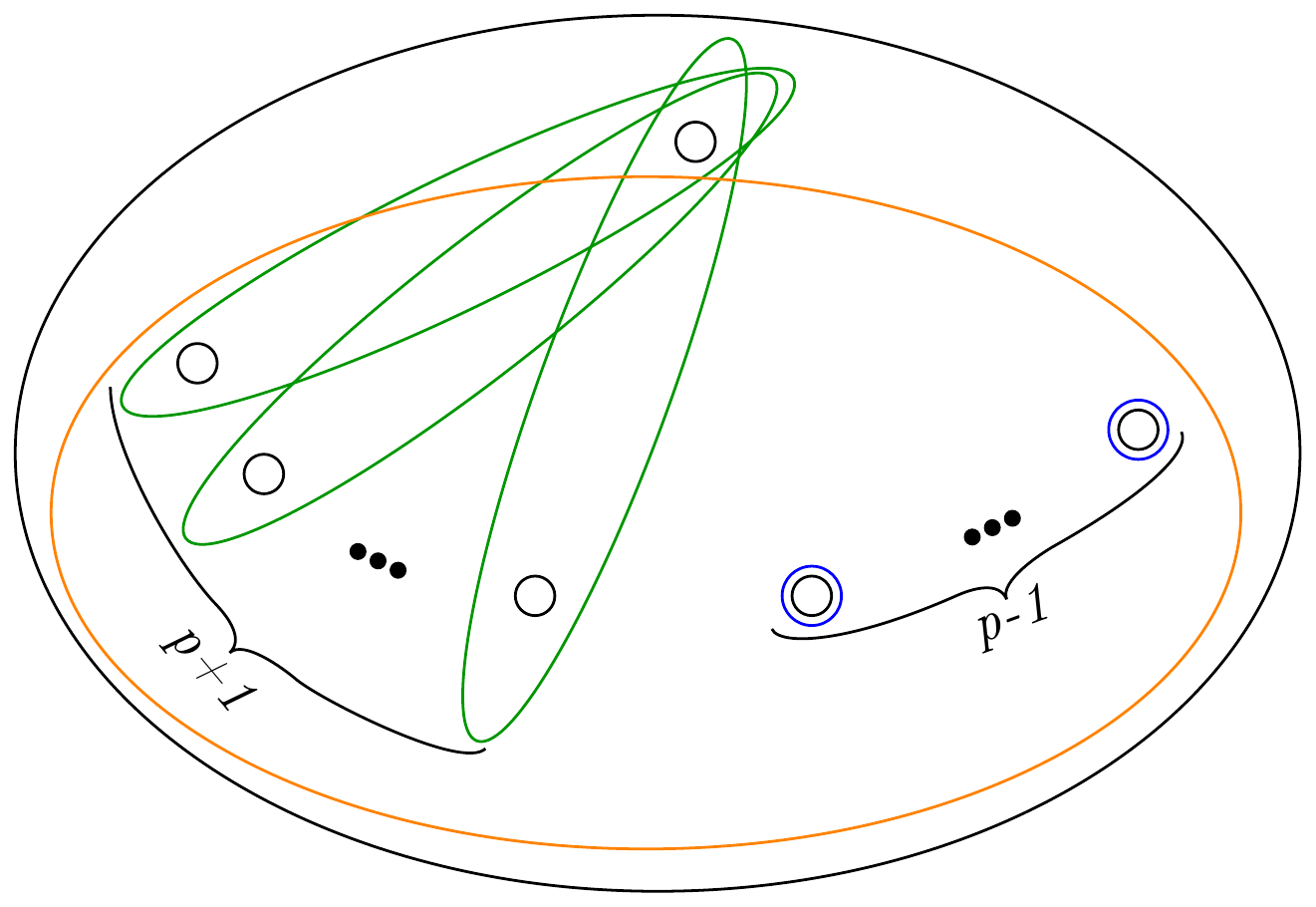}
			\caption{Lefschetz fibration for the filling corresponding to the $\Ac_p$ family.}
			\label{fig:ApLF}
		\end{figure}
		
		\begin{figure}[h!]
			\centering
			\includegraphics[scale=.45]{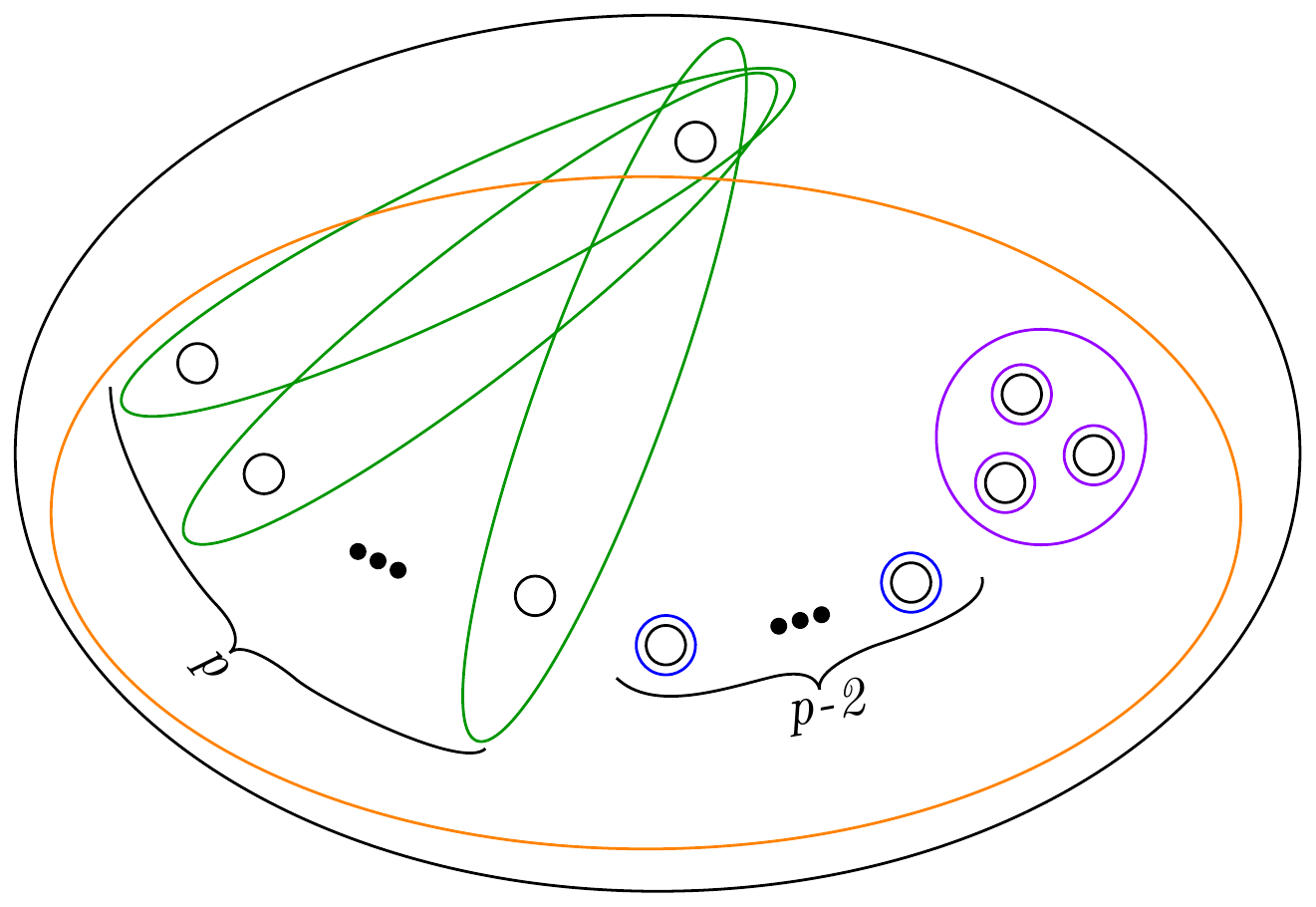} \hspace{.5cm} \includegraphics[scale=.5]{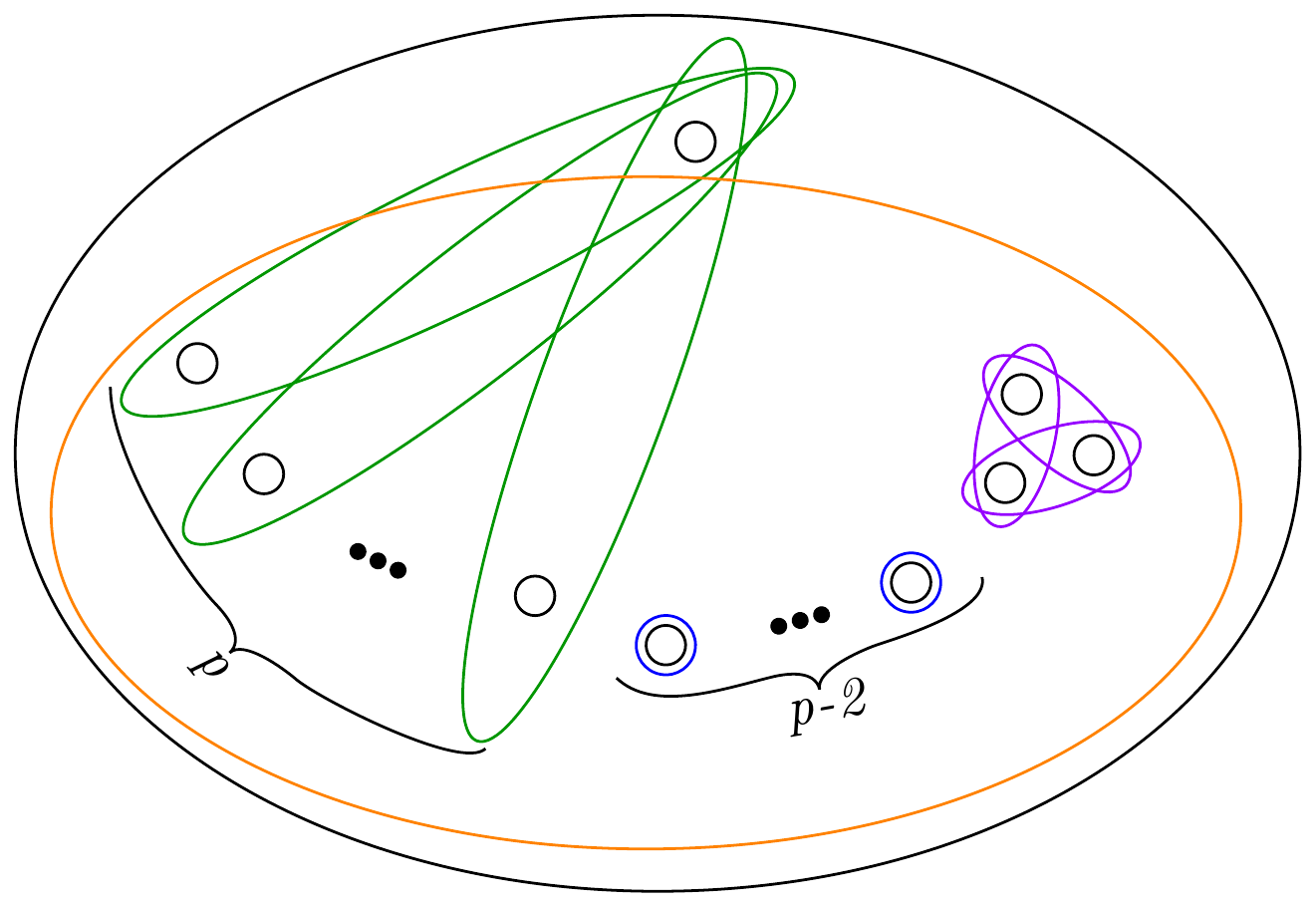}
			\caption{Lefschetz fibrations for the two fillings corresponding to the $\Bc_p$ family.}
			\label{fig:BpLF}
		\end{figure}

	If the proposed open book decompositions do indeed describe the correct contact structures, we would also obtain that all cuspidal contact structures of types $\Ac_p$, $\Bc_p$, or Fibonacci are supported by a planar open book. By contrast, by~\cite[Theorem~1.2]{GhigginiGPlamenevskaya}, we know that in the cases of $\Ec$ and $\Dc$ the cuspidal contact structures \emph{cannot} be planar. On the other hand, Etnyre and Ozbagci have proved that these contact structures are supported by an open book of genus 1~\cite{EtnyreOzbagci}.

\bibliography{cuspidal}
\bibliographystyle{alpha}

\end{document}